\numberwithin{equation}{section}
\theoremstyle{plain}
\newtheorem{theorem}{Theorem}[section]
\newtheorem{proposition}[theorem]{Proposition}
\newtheorem{lemma}[theorem]{Lemma}
\theoremstyle{remark}
\newtheorem*{ack}{Acknowledgement}
\theoremstyle{definition}
\newcommand{\LL}{\mathcal{L}}
\newcommand{\PP}{\mathcal{P}}
\newcommand{\R}{\mathbb{R}}
\newcommand{\C}{\mathbb{C}}
\newcommand{\Q}{\mathbb{Q}}
\newcommand{\N}{\mathbb{N}}
\newcommand{\iii}{\mathtt{i}}
\newcommand{\jjj}{\mathtt{j}}
\newcommand{\eps}{\varepsilon}
\newcommand{\dd}{\,\mathrm{d}}
\renewcommand{\ge}{\geqslant}
\renewcommand{\le}{\leqslant}
\renewcommand{\geq}{\geqslant}
\renewcommand{\leq}{\leqslant}
\DeclareMathOperator{\udimloc}{\overline{dim}_{loc}}
\DeclareMathOperator{\ldimloc}{\underline{dim}_{loc}}
\DeclareMathOperator*{\essinf}{ess\,inf}
\DeclareMathOperator{\ldimh}{\underline{dim}_H}
\DeclareMathOperator{\dist}{dist}
\DeclareMathOperator{\diam}{diam}
\DeclareMathOperator{\spt}{spt}
\renewcommand{\atop}[2]{\genfrac{}{}{0pt}{}{#1}{#2}}
\begin{document}

\title{Scaling limits of self-conformal measures}

\author{Bal\'azs B\'ar\'any}
\address[Bal\'azs B\'ar\'any]
        {Department of Stochastics \\
         Institute of Mathematics \\
        Budapest University of Technology and Economics \\
         M\H{u}egyetem rkp 3 \\
         H-1111 Budapest \\
         Hungary}
\email{balubsheep@gmail.com}

\author{Antti K\"aenm\"aki}
\address[Antti K\"aenm\"aki]
        {Research Unit of Mathematical Sciences \\ 
         P.O.\ Box 8000 \\ 
         FI-90014 University of Oulu \\ 
         Finland}
\email{antti.kaenmaki@oulu.fi}

\author{Aleksi Py\"or\"al\"a}
\address[Aleksi Py\"or\"al\"a]
        {Research Unit of Mathematical Sciences \\ 
         P.O.\ Box 8000 \\ 
         FI-90014 University of Oulu \\ 
         Finland}
\email{aleksi.pyorala@oulu.fi}

\author{Meng Wu}
\address[Meng Wu]
        {Research Unit of Mathematical Sciences \\ 
         P.O.\ Box 8000 \\ 
         FI-90014 University of Oulu \\ 
         Finland}
\email{meng.wu@oulu.fi}

\thanks{B.\ B\'ar\'any was supported by the grants NKFI FK134251, K142169, and the grant NKFI KKP144059 ``Fractal geometry and applications''.  M.\ Wu was supported by the Academy of Finland, grant No.\ 318217.}
\subjclass[2000]{Primary 28A80; Secondary 28A33, 37A10, 37C45.}
\keywords{Self-conformal measure, scenery flow, fractal distribution, normal numbers, resonance of measures, orthogonal projections}
\date{\today}

\begin{abstract}
  We show that any self-conformal measure $\mu$ on $\R$ is uniformly scaling and generates an ergodic fractal distribution. This generalizes existing results by removing the need for any separation condition. We also obtain applications to the prevalence of normal numbers in self-conformal sets, the resonance between self-conformal measures on the line, and projections of self-affine measures on carpets.
\end{abstract}

\maketitle

\section{Introduction}

Taking tangents and studying an object through its small-scale structure is a classical approach in analysis. Such an idea also finds applications in geometric measure theory: For a Radon measure $\mu$ on $\R^d$, a point $x$ in its support $\spt\mu$, and a number $t\geq 0$, let $\mu_{x,t}$ be the \emph{magnification} of $\mu$ at $x$ by $e^t$, that is, the probability measure on the unit ball given by 
$$
  \mu_{x,t}(A) = \frac{\mu(e^{-t}A + x)}{\mu(B(x,e^{-t}))}
$$
for every Borel set $A\subseteq B(0,1)$. The family $(\mu_{x,t})_{t\geq0}$ is called the \emph{scenery} of $\mu$ at $x$. The statistical properties of the scenery have substantial implications on the local geometry of $\mu$ at $x$ and to capture these statistics, we define the \emph{scenery flow} of $\mu$ at $x$ as the one-parameter family $(\langle \mu \rangle_{x,T})_{T > 0}$, where
\begin{equation}\label{eq-sceneryflow}
  \langle \mu \rangle_{x,T} = \frac{1}{T}\int_0^T \delta_{\mu_{x,t}}\dd t
\end{equation}
and $\delta_\nu$ denotes the Dirac mass at $\nu$. The accumulation points of \eqref{eq-sceneryflow} in the weak$^*$-topology are called \emph{tangent distributions} of $\mu$ at $x$. If there exists a measure $P$ such that for $\mu$-almost every $x$, $P$ is the unique tangent distribution at $x$, we say that $\mu$ is \emph{uniformly scaling} and generates $P$.

The notion of uniform scaling appeared first in a work of Gavish \cite{Gavish2011}, where he also showed that self-similar measures on $\R^d$ with the open set condition are uniformly scaling. Recall that a measure $\mu$ is self-similar if there exist affine, contracting, and angle preserving maps $f_1,\ldots, f_n$ and a probability vector $(p_1,\ldots,p_n)$ such that 
\begin{equation}\label{eq-selfsimilardef}
  \mu = \sum_{i=1}^n p_i f_*\mu.
\end{equation}
The proof given by Gavish relied on the theory of Furstenberg's CP-chains, and a more direct ergodic-theoretic proof was introduced by Hochman \cite{Hochmanpreprint} in his systematic study of tangent distributions. Later, using a technique similar to that of Gavish, with the additional machinery developed in \cite{Hochmanpreprint}, Hochman and Shmerkin \cite{HochmanShmerkin2015} showed that also self-conformal measures on $\R$ with the open set condition are uniformly scaling. Recall that a measure is self-conformal if it satisfies \eqref{eq-selfsimilardef} with $f_i$'s being injective and differentiable on some bounded open and convex set $U\subset \R^d$, with their differentials $x\mapsto D f_i|_x$ being Hölder continuous, $\sup_{x\in U} \Vert D f_i|_x\Vert < 1$, $D f_i|_x\neq 0$, and $\Vert Df_i|_x\Vert^{-1}Df_i|_x$ being orthogonal for every $x\in U$. In this case, we say that $\Phi = (f_i)_{i \in \Lambda}$ is a \emph{conformal iterated function system}. In \cite{Pyorala2022}, the third author showed that for a self-similar measure to be uniformly scaling it suffices to assume only the weak separation condition.

Self-affine measures are measures that satisfy \eqref{eq-selfsimilardef} with $f_i$'s being affine maps. Regarding such measures in higher dimensions, Ferguson, Fraser, and Sahlsten \cite{FergusonFraserSahlsten2015} have shown that self-affine measures on Bedford-McMullen carpets in $\R^2$ are uniformly scaling. This more or less amounts to what is currently known of the scenery flow of self-affine measures. While Kempton \cite{Kempton2015} showed that self-affine measures with an irreducibility condition are not necessarily uniformly scaling, the question of general reducible self-affine measures remains open. 

In this work, we obtain a complete resolution to the question of the scaling properties for all self-conformal measures in $\R^d$, removing the need for any separation conditions from the assumptions of \cite{Gavish2011, HochmanShmerkin2015, Pyorala2022}, and establish uniformly scaling property on $\R$. As a consequence, we obtain applications to the prevalence of normal numbers in self-conformal sets and to the dimensions of convolutions of self-conformal measures on the line, both of which are active research topics on their own. Furthermore, we prove that self-affine measures on any carpet satisfying the strong separation condition are uniformly scaling and, as a consequence, show that the dimension of every non-principal orthogonal projection of the self-affine measure is preserved.

Before formulating the main results, let us introduce some notation. Let $\Lambda$ be a finite set having at least two elements and let $\Phi=(f_i)_{i\in \Lambda}$ be a conformal iterated function system on $\R^d$ with attractor $K$. A measure $\bar{\mu}$ on $\Lambda^\N$ is called \emph{quasi-Bernoulli} if there exists $C \ge 1$ such that 
\begin{equation} \label{eq:quasi-bernoulli-def}
  C^{-1} \bar{\mu}([\mathtt{i}])\bar{\mu}([\mathtt{j}]) \leq \bar{\mu} ([\mathtt{ij}]) \leq C \bar{\mu}([\mathtt{i}])\bar{\mu}([\mathtt{j}])
\end{equation}
for all finite words $\mathtt{i}$ and $\mathtt{j}$ formed from the elements of $\Lambda$. Here $[\mathtt{i}]$ denotes the collection of infinite words with prefix $\mathtt{i}$. If $C \ge 1$ above can be chosen to be $1$, then $\bar{\mu}$ is a \emph{Bernoulli measure}. Let $\Pi$ be the canonical projection $\Lambda^\N\to K$; see \S \ref{sec:symbolic-space} for the definition. If $\bar{\mu}$ is quasi-Bernoulli, then, by slightly abusing notation, we also call the measure $\mu=\Pi_*\bar{\mu}$ on $K$ quasi-Bernoulli. In particular, any self-conformal measure is Bernoulli. For the definition of an ergodic fractal distribution, see \S \ref{sec:tangent-distribution}.

\begin{theorem}\label{thm-unif-scaling-1}
  Let $\Phi$ be a conformal iterated function system on $\R^d$ with attractor $K$ and $\mu$ be a quasi-Bernoulli measure on $K$. If $d=1$ or $\Phi$ consists of similarities, then $\mu$ is uniformly scaling and generates an ergodic fractal distribution.
\end{theorem}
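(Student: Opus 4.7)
The plan is to realize the scenery flow at $\mu$-typical points as a continuous factor of an ergodic suspension flow over the one-sided shift, following the strategy of Hochman and Shmerkin \cite{Hochmanpreprint,HochmanShmerkin2015} but arranging the argument so that no separation hypothesis enters. I sketch the proof for $d=1$; the similarity case in $\R^d$ is the same once the rotational part of the derivatives is carried as an auxiliary coordinate.

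First I set up the symbolic model. The quasi-Bernoulli measure $\bar\mu$ on $\Lambda^{\N}$ is equivalent to a $\sigma$-invariant Gibbs measure for a Hölder potential, hence we may and do work with a shift-invariant ergodic lift; this does not change $\mu$ on a set of full measure. Let $\tau(\omega) = -\log|f'_{\omega_1}(\Pi(\sigma\omega))|$, a strictly positive Hölder continuous roof, and consider the suspension flow $(\Omega_\tau,\psi_t,\nu_\tau)$ with its normalized product invariant measure, which is ergodic because $\sigma$ is. Using bounded distortion for the conformal IFS, one defines a symbolic magnification map $\Theta\colon\Omega_\tau\to\PP(B(0,1))$ which at $(\omega,s)$ encodes the push-forward of $\mu$ by the affine map sending $\Pi(\omega)$ to $0$ and rescaling by a factor $\asymp e^{s}$ relative to the depth $n=n(\omega,s)$ determined by $\psi_s(\omega,0)=(\sigma^n\omega,\,s-\sum_{k=0}^{n-1}\tau(\sigma^k\omega))$, then restricted and normalized to the unit ball. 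By construction, $\Theta(\psi_t(\omega,0))$ describes the magnification of $\mu$ obtained by zooming along the coding $\omega$.

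The crux, and the only place where the lack of separation is felt, is the pointwise comparison
\begin{equation*}
\frac{1}{T}\int_0^T \delta_{\mu_{\Pi(\omega),t}}\dd t \;-\; \frac{1}{T}\int_0^T \delta_{\Theta(\psi_t(\omega,0))}\dd t \;\longrightarrow\; 0
\end{equation*}
in the weak-$*$ topology on $\PP(\PP(B(0,1)))$, for $\bar\mu$-a.e.\ $\omega$. Under the open set condition the two integrands already agree up to bounded distortion; in the overlapping case the ball $B(\Pi(\omega),e^{-t})$ may receive mass from cylinders $f_{\mathtt{j}}(K)$ with $\mathtt{j}\neq \omega|_n$. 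Two ingredients handle this. First, a volume/length argument on the line shows that the number of depth-$n$ cylinders of diameter comparable to $e^{-t}$ meeting the ball is uniformly bounded in $\omega$ and $t$. Second, the quasi-Bernoulli inequality \eqref{eq:quasi-bernoulli-def} guarantees that each such extra cylinder contributes a piece whose renormalization is itself of the form $\Theta(\omega',s')$ at a comparable position in $\Omega_\tau$. Enlarging the base to record this bounded list of ``nearby'' codings produces an extended ergodic system, and a further application of Birkhoff yields the displayed convergence.

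Combining the previous two paragraphs and applying Birkhoff for $\psi_t$ to the continuous observable $F(\omega,s)=\delta_{\Theta(\omega,s)}$, one concludes
\begin{equation*}
\lim_{T\to\infty}\langle\mu\rangle_{\Pi(\omega),T}\;=\;\Theta_*\nu_\tau\;=:\;P
\end{equation*}
weak-$*$, for $\bar\mu$-a.e.\ $\omega$, so $\mu$ is uniformly scaling and generates $P$. That $P$ is an ergodic fractal distribution follows because $\Theta$ intertwines $\psi_t$ with the scaling flow on $\PP(B(0,1))$ modulo the fiberwise equivalence constructed above, transporting ergodicity of $\nu_\tau$ to ergodicity of $P$; the scaling invariance required in the definition of a fractal distribution descends from the shift-invariance of $\bar\mu$. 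The expected main obstacle is precisely the overlapping-cylinder comparison in the third paragraph: the remainder is a fairly standard adaptation of the symbolic-suspension framework of \cite{Hochmanpreprint}.
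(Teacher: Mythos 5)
Your proposal goes by a genuinely different route from the paper, and unfortunately the route has a gap precisely at the point you yourself flag as the crux.

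You propose to build a symbolic suspension flow over the shift and a magnification map $\Theta$, and then to compare the scenery flow at $\Pi(\omega)$ with $\Theta$ along the suspension; this is the ``direct'' symbolic-scenery-flow strategy in the spirit of Gavish, Ferguson--Fraser--Sahlsten, and the WSC paper cited as \cite{Pyorala2022}. The paper proceeds in essentially the opposite direction: it never tries to code the scenery symbolically. It instead invokes Hochman's abstract results (Theorems \ref{thm-Hochman1.6} and \ref{thm-Hochman1.7}) to conclude that $\mu$-typical tangent \emph{measures} are already uniformly scaling and generate ergodic fractal distributions, shows in Lemma~\ref{lemma-structureoftangents} that such tangent measures are equivalent to measures of the form $(\nu\cdot\mu)_{B(0,1)}$, and then uses the entropy-comparison result Proposition~\ref{prop-main-1} (a Gibbs-inequality/Kullback--Leibler argument at the level of dyadic cell distributions) to pull the regular scaling behaviour of the tangent measure back to $\mu$ itself. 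The $d=1$ and similarity cases are then settled by the resulting identity $P_1=(Dh|_x(Dg|_x)^{-1})_*P_2$ between ergodic components together with scaling- (resp.\ near-rotational) invariance. The payoff of this indirection is that no counting of overlapping cylinders is ever required.

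The concrete gap in your proposal is the claim that ``a volume/length argument on the line shows that the number of depth-$n$ cylinders of diameter comparable to $e^{-t}$ meeting the ball is uniformly bounded in $\omega$ and $t$.'' This is false without a separation condition, and it is exactly the statement that separation conditions are designed to provide. Absent separation, cylinders of comparable size can pile up at a point with unbounded multiplicity; a length argument only bounds the number of \emph{pairwise disjoint} such intervals. The paper signals this explicitly in the remark after Lemma~\ref{lemma-structureoftangents}: the measure $\nu$ in the representation $\eta\sim_C(\nu\cdot\mu)_{B(0,1)}$ of a tangent measure is a finite sum of Dirac masses under the open set or weak separation condition, but ``in general, this need not be the case and $\nu$ may even be non-atomic.'' A non-atomic $\nu$ corresponds precisely to an unbounded (indeed, growing to infinity) number of overlapping cylinders at scale $r$ as $r\to 0$. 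Consequently the ``extended ergodic system'' you would like to build by appending the bounded list of nearby codings to the base does not exist as described; and even under WSC, where the multiplicity is bounded, establishing ergodicity of the extended system is the substantive content of \cite{Pyorala2022} and not a formal consequence of recording a finite list. Removing the separation hypothesis is the whole point of the theorem, and the symbolic-suspension approach as sketched does not achieve it.

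A secondary issue, though not fatal on its own, is the opening assertion that a quasi-Bernoulli measure ``is equivalent to a $\sigma$-invariant Gibbs measure for a Hölder potential.'' Quasi-Bernoulli measures are equivalent to ergodic shift-invariant measures, but they are not in general Gibbs for Hölder potentials, so the roof function $\tau$ is genuinely the only Hölder object in sight; the base measure need not be one.
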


In higher dimensions, self-conformal measures are in general not uniformly scaling. The following counter-example is by Fraser and Pollicott \cite{FraserPollicott2015}: The upper half of the boundary of the unit circle in $\R^2$ is a self-conformal set and the normalized one-dimensional Lebesgue measure $\mu$ on the semi-circle is a self-conformal measure. It is clear that for any point $x$ on the semi-circle all tangent measures of $\mu$ at $x$ are supported on lines parallel to the tangent to the semi-circle at $x$.  Thus for distinct $x$ and $y$ the collections of tangent measures of $\mu$ at $x$ and $y$ do not intersect and, in particular, the measure $\mu$ is not uniformly scaling. Nevertheless, we can state the following.

\begin{theorem}\label{thm-unif-scaling-2}
    Let $\Phi$ be a conformal iterated function system on $\R^d$ with attractor $K$ and $\mu$ be a quasi-Bernoulli measure on $K$. Then there exists an ergodic fractal distribution $P$ such that for $\mu$-almost every $x$ and any tangent distribution $Q$ at $x$ there exists a measure $\zeta$ on the orthogonal group of $\R^d$ such that $Q = \int O_*P\dd \zeta(O)$.
\end{theorem}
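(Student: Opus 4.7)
The plan is to reduce Theorem \ref{thm-unif-scaling-2} to Theorem \ref{thm-unif-scaling-1} by isolating the rotational part of the conformal derivative, which is the sole obstruction to uniform scaling in $d \ge 2$, as the semi-circle counter-example preceding the theorem statement illustrates. Concretely, I would show that once one cancels a cocycle of rotations, the remaining orientation-normalized scenery converges to an ergodic fractal distribution $P$, and that any tangent distribution $Q$ at $x$ is recovered from $P$ by averaging over the limiting distribution of these rotations.

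First, pass to the symbolic coding: write $\mu = \Pi_*\bar\mu$ with $\bar\mu$ quasi-Bernoulli on $\Lambda^\N$. For each $\omega \in \Lambda^\N$ and $n \ge 1$, conformality of $f_{\omega|_n}$ gives $Df_{\omega|_n}|_y = r_n(\omega, y)\,O_n(\omega, y)$ with $r_n(\omega, y) = \|Df_{\omega|_n}|_y\|$ and $O_n(\omega, y)$ orthogonal. Hölder continuity of the differentials (bounded distortion) implies that $O_n(\omega, y)$ and $r_n(\omega, y)$ are essentially constant in $y \in U$ up to errors that vanish with $n$; write $O_n(\omega)$ and $r_n(\omega)$ for representative values. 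At the stopping-time scale $t_n(\omega) = -\log r_n(\omega)$, self-conformality of $\mu$ together with bounded distortion yields
\begin{equation*}
\mu_{\Pi(\omega), t_n(\omega)} \approx (O_n(\omega))_*\, \nu_{\sigma^n\omega}
\end{equation*}
weakly, where $\nu_\omega$ is an auxiliary unit-scale magnification of $\mu$ depending only on $\omega$ and not on the accumulated orientation.

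Next, following the strategy of the proof of Theorem \ref{thm-unif-scaling-1}, one shows that the orientation-corrected empirical scenery
\begin{equation*}
\frac{1}{T}\int_0^T \delta_{(O_{n(t)}(\omega))_*^{-1}\, \mu_{\Pi(\omega), t}}\, \dd t
\end{equation*}
converges weakly, for $\bar\mu$-almost every $\omega$, to an ergodic fractal distribution $P$ independent of $\omega$. The argument mirrors the $d=1$ or similarities case: the stopping-time scales $t_n(\omega)$ partition $[0, T]$ into intervals on which the integrand is governed by $\nu_{\sigma^n\omega}$; the shift $\sigma$ is ergodic for $\bar\mu$; and an interpolation argument transfers the discrete-time ergodic behavior at cylinder scales to continuous time.

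Finally, given any tangent distribution $Q$ at $x = \Pi(\omega)$ realized along $T_k \to \infty$, extract a further subsequence along which the empirical rotation measure $\zeta_{T_k} = T_k^{-1}\int_0^{T_k}\delta_{O_{n(t)}(\omega)}\,\dd t$ converges weakly to some $\zeta \in \mathcal{P}(O(d))$; this is possible by compactness of $O(d)$. Using joint continuity of $(O,\eta)\mapsto O_*\eta$ and the orientation-corrected convergence above, one concludes $Q = \int_{O(d)} O_*P\, \dd\zeta(O)$. The main obstacle is establishing that the orientation-normalized scenery is uniformly scaling: the proof of Theorem \ref{thm-unif-scaling-1} does not directly cover higher-dimensional conformal maps, and a careful cocycle argument is needed to guarantee that the normalization truly decouples the rotation from the limiting fractal distribution $P$.
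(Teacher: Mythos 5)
Your proposal hinges on the stopping-time approximation
$\mu_{\Pi(\omega),\,t_n(\omega)} \approx (O_n(\omega))_*\,\nu_{\sigma^n\omega}$,
which represents the magnification at a cylinder scale as a \emph{single} rotated conformal image of a unit-scale object. This is exactly where a separation condition would be used, and it fails in the setting of the theorem: without any separation, the ball $B(\Pi(\omega),e^{-t_n})$ can meet many cylinders $f_\iii(K)$ of comparable size, so $\mu_{\Pi(\omega),t_n(\omega)}$ is a nontrivial \emph{mixture} of conformal images, each carrying its own rotation and its own further subdivision. The paper encodes this in Lemma \ref{lemma-structureoftangents}, where tangent measures are only shown to be equivalent to $\left(\int h_*\mu\,\dd\nu(h)\right)_{B(0,1)}$ for a (possibly non-atomic) measure $\nu$ on conformal maps; they are in general not of the form $(O_n)_*\nu_{\sigma^n\omega}$. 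Once your magnification is a mixture, the subsequent stopping-time partition and ergodic/interpolation argument --- which is essentially the CP-chain argument of Gavish and Hochman--Shmerkin --- no longer gives uniqueness of the limiting distribution, because distinct branches contribute distinct orientations and distinct sub-sceneries simultaneously.

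You also attribute to Theorem \ref{thm-unif-scaling-1} a proof strategy it does not have. In the paper, Theorem \ref{thm-unif-scaling-1} is not proved by a CP-chain/stopping-time argument; rather, Theorems \ref{thm-unif-scaling-1}--\ref{thm-unif-scaling-3} share a single mechanism: Hochman's Theorems \ref{thm-Hochman1.6} and \ref{thm-Hochman1.7} say that typical tangent measures of $\mu$ are already uniformly scaling and generate ergodic fractal distributions; Lemma \ref{lemma-structureoftangents} identifies these tangent measures as mixtures $\nu\cdot\mu$; and the key entropy estimate, Proposition \ref{prop-main-1} (a Kullback--Leibler/Gibbs inequality argument comparing $H(\nu\cdot\mu,\mathcal{D}_n)$ to the average of $H(h_*\mu,\mathcal{D}_n)$), shows that the sceneries of $\nu\cdot\mu$ and of a $\nu$-typical $h_*\mu$ coincide asymptotically. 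Then Proposition \ref{prop-Hochman1.9} transports the scenery of $h_*\mu$ at $h(x)$ to that of $\mu$ at $x$ via $Dh|_x$, yielding $P_1=(Dh|_x\,(Dg|_x)^{-1})_*P_2$ between ergodic components. Theorem \ref{thm-unif-scaling-1} is then the special case where this conjugating map is a dilation (d=1) or a dense subgroup of rotations (similarities). So the hard step you flag as ``the main obstacle'' --- establishing uniform scaling of the orientation-normalized scenery --- is precisely the content of Proposition \ref{prop-main-1}, and the resolution sketched in your proposal would only recover the already-known OSC case.

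Your final step (extracting $\zeta$ as a weak limit of the empirical rotation measures and using continuity of $(O,\eta)\mapsto O_*\eta$) is sound as far as it goes, but it presupposes the convergence of the orientation-corrected scenery, which is unestablished. The paper instead reaches the conclusion by noting that any tangent distribution $Q$ is a fractal distribution (Theorem \ref{thm-Hochman1.7}), decomposing it into ergodic components, and using the relation $P_1=(Dh|_x(Dg|_x)^{-1})_*P_2$ to show every ergodic component of $Q$ is $O_*P$ for some $O\in O(d)$; then $\zeta$ is just the pushforward of the ergodic decomposition measure under $\omega\mapsto O(\omega)$.
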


Alternatively, we can prove that for some diffeomorphism $h$, the measure $h_*\mu$ is uniformly scaling along a subsequence.

\begin{theorem}\label{thm-unif-scaling-3}
    Let $\Phi$ be a conformal iterated function system on $\R^d$ with attractor $K$ and $\mu$ be a quasi-Bernoulli measure on $K$. Then there exists an ergodic fractal distribution $P$, a sequence $(n_k)_{k\in\N}$, and a function $h\in \mathcal{C}^{1+\alpha}(\R^d)$ such that
    \begin{equation*}
        \lim_{k\to\infty} \frac{1}{n_k} \int_0^{n_k} \delta_{(h_*\mu)_{x,t}}\dd t = P
    \end{equation*}
    for $h_*\mu$-almost all $x$.
\end{theorem}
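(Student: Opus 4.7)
The plan is to deduce Theorem \ref{thm-unif-scaling-3} from Theorem \ref{thm-unif-scaling-2} by constructing a $\mathcal{C}^{1+\alpha}$ diffeomorphism $h$ that straightens out the rotational mixing in the scenery of $\mu$. (For $d = 1$ the result is trivial with $h = \operatorname{id}$ by Theorem \ref{thm-unif-scaling-1}, so assume $d \geq 2$.) By Theorem \ref{thm-unif-scaling-2}, there is an ergodic fractal distribution $P$ such that for $\mu$-a.e.\ $x$ every tangent distribution of $\mu$ at $x$ has the form $\int O_* P \dd \zeta_x(O)$ for some probability measure $\zeta_x$ on $O(d)$. Using the ergodicity of the shift system $(\Lambda^\N, \bar\mu, \sigma)$ applied to the orthogonal cocycle induced by $\Phi$, one verifies that $\zeta_x$ is $\mu$-a.e.\ equal to a single deterministic measure $\zeta$ on $O(d)$.

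The key observation is the first-order Taylor approximation $(h_*\mu)_{h(x),t} \approx (Dh|_x)_* \mu_{x,t}$, valid up to an error decaying in $t$ by the H\"older regularity of $Dh$. Consequently the tangent distribution of $h_*\mu$ at $h(x)$ becomes $\int (Dh|_x \cdot O)_* P \dd \zeta(O)$, and I would choose $h$ so that $Dh|_x$ cancels this mixture to $P$. Concretely, by Lusin's theorem fix a compact $F \subseteq K$ of large $\mu$-measure on which a continuous section $\tau : F \to O(d)$ implementing the cancellation---so that $\int (\tau(x) \cdot O)_* P \dd \zeta(O) = P$---is selected; extend $\tau$ through the Whitney extension theorem to a $\mathcal{C}^{1+\alpha}$ field $\tilde\tau : \R^d \to GL(d, \R)$; and construct a $\mathcal{C}^{1+\alpha}$ diffeomorphism $h$ whose derivative on $F$ matches (or closely approximates) $\tilde\tau$. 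The subsequence $(n_k)$ is obtained from an Egorov-type uniformization over a further compact subset of $F$ on which the scenery convergence is uniform, with $n_k \to \infty$ chosen so that the Taylor-approximation error of $h$ remains negligible on $[0, n_k]$. Letting the compact set exhaust $\mu$-a.a.\ $K$ then yields the conclusion.

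The main obstacle is the existence of the cancelling section $\tau$: the required condition forces $\spt(\zeta)$ to lie in a single left coset of the stabilizer $\operatorname{Stab}(P) \leq O(d)$, which is a statement about the ergodic decomposition of the fractal distribution $\int O_* P \dd \zeta$ under the joint action of the magnification flow and $O(d)$ and requires careful analysis. A secondary difficulty is that prescribing $Dh = \tilde\tau$ exactly is subject to a curl-freeness constraint in $d \geq 2$; this is naturally handled by replacing exact matching with approximate matching on $F$ and absorbing the defect into the subsequence $(n_k)$.
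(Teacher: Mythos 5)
Your proposal tries to deduce Theorem~\ref{thm-unif-scaling-3} from Theorem~\ref{thm-unif-scaling-2} by geometrically constructing a diffeomorphism $h$ that ``undoes'' the rotational averaging. This is a genuinely different route from the paper's, but it contains at least two serious gaps.

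First, the map $h$ in the statement must lie in $\mathcal{C}^{1+\alpha}(\R^d)$, which in this paper's terminology means \emph{conformal}: $\Vert Dh|_x\Vert^{-1}Dh|_x$ must be orthogonal at every point. Conformal maps in $\R^d$ are extremely rigid: for $d\geq 3$ Liouville's theorem forces $h$ to be a Möbius transformation, and for $d=2$ it must be (anti)holomorphic. Your plan to prescribe the orthogonal part $\tau(x)$ of $Dh|_x$ by Lusin selection and Whitney extension, and then integrate a $\mathcal{C}^{1+\alpha}$ vector field, can at best produce a generic smooth diffeomorphism, not a conformal one; the ``curl-freeness'' you flag is not the real obstruction, the conformality is, and it is not an approximation-theoretic problem that can be absorbed into the subsequence $(n_k)$.

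Second, the very existence of a cancelling section $\tau$ with $\int (\tau(x) O)_*P\dd\zeta(O)=P$ forces $\spt\zeta$ to live in a single left coset of $\operatorname{Stab}(P)\leq O(d)$, as you note, but you offer no argument for why this should hold. Theorem~\ref{thm-unif-scaling-2} only says that \emph{each} tangent distribution $Q$ at $x$ is some $O(d)$-average of $P$, with the mixing measure $\zeta$ allowed to depend on both $x$ and $Q$ (there may be several tangent distributions at a given point). Your appeal to ergodicity of the symbolic shift to flatten $\zeta_x$ to a single deterministic $\zeta$ is not justified---tangent distributions are not a priori a shift-equivariant measurable object in the required sense.

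For contrast, the paper's proof never constructs $h$ geometrically: in the course of proving Theorem~\ref{thm-unif-scaling-2}, after invoking Lemma~\ref{lemma-structureoftangents} to represent typical tangent measures as $(\nu\cdot\mu)_{B(0,1)}$ with $\nu\in\mathcal{P}(\mathcal{C}^{1+\alpha}(\R^d))$, and after applying the entropy comparison of Proposition~\ref{prop-main-1}, one arrives at the averaged estimate \eqref{eq-averageusc}, namely that
\begin{equation*}
\iint_{h^{-1}(B(0,1))} d_{\rm LP}\Bigl(P,\tfrac{1}{n}\int_0^n \delta_{(h_*\mu)_{h(x),t}}\dd t\Bigr)\dd\mu(x)\dd\nu(h)\longrightarrow 0.
\end{equation*}
Theorem~\ref{thm-unif-scaling-3} is then immediate by extracting a subsequence along which the integrand tends to $0$ pointwise; the conformal $h$ comes \emph{for free} as a $\nu$-typical element, which is by construction a scaled composition of maps in $\Phi$, hence conformal. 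So the logical order is reversed relative to your proposal: Theorem~\ref{thm-unif-scaling-3} is a waypoint on the way to Theorem~\ref{thm-unif-scaling-2}, not a corollary of it. I would recommend abandoning the geometric-construction approach and instead working through Proposition~\ref{prop-main-1} and Lemma~\ref{lemma-structureoftangents}.
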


The proofs of Theorems \ref{thm-unif-scaling-1}--\ref{thm-unif-scaling-3} are postponed until \S \ref{section: proof-scenery-flow}. We remark that the proofs are somewhat indirect: Instead of studying the scenery $(\mu_{x,t})_{t\geq 0}$ directly, we rely on a result of Hochman \cite{Hochmanpreprint} that for any Radon measure $\mu$ typical \emph{tangent measures}, i.e., the weak$^*$-accumulation points of the sequence $(\mu_{x,t})_{t\geq 0}$ are uniformly scaling and generate ergodic fractal distributions. When $\mu$ is self-conformal, its tangent measures have a very special structure: they are of the form
\begin{equation}\label{eq-tangentmeasure}
  \int h_*\mu \dd\nu(h)
\end{equation}
for some measure $\nu$ in the space of diffeomorphisms $\R^d\to\R^d$. This is almost the case for quasi-Bernoulli measures as well, as will be seen in Lemma \ref{lemma-structureoftangents}. Furthermore, another general result of Hochman \cite{Hochmanpreprint} states that many tangent measures have Hausdorff dimension at most that of $\mu$. The proof is then concluded by the key technical result of the paper, Proposition \ref{prop-main-1}, which states that the sceneries of a measure of the form \eqref{eq-tangentmeasure}, with dimension close to that of $\mu$, will be asymptotic to the sceneries of $\mu$ at many points. This will allow us to transfer the regularity of local statistics of typical tangent measures back to the regularity of local statistics of $\mu$.

We will next discuss some applications of Theorem \ref{thm-unif-scaling-1}. For a contracting diffeomorphism $f\colon\R\to\R$ we define the \emph{asymptotic contraction ratio} of $f$ to be $\lambda(f) = Df|_p$, where $p$ is the fixed point of $f$. Recall that for $\beta>1$ a real number $x$ is called $\beta$-\emph{normal} if the sequence $(\beta^n x \mod 1)_{n\in\N}$ equidistributes for the unique absolutely continuous $\times \beta$-invariant measure on the torus. When $\beta\in\N$ such an invariant measure is just the Lebesgue measure. Let us also recall from Hochman and Shmerkin \cite{HochmanShmerkin2015} that an iterated function system $\Phi=(f_i)_{i\in\Lambda}$ is said to be \emph{totally non-linear} if it is not conjugate to a linear iterated function system via $\mathcal{C}^1$ maps, that is, there is no invertible $g\in \mathcal{C}^1(\R)$ such that the iterated function system $g\Phi=(g\circ f_i\circ g^{-1})_{i\in\Lambda}$ consists only of linear maps.

Combining Theorem \ref{thm-unif-scaling-1} with the methods of Hochman and Shmerkin \cite{HochmanShmerkin2015}, we obtain the following result on normal numbers in self-conformal sets. The proof of the result can be found in \S \ref{section-normalnumbers}. We say that a conformal iterated function system $\Phi = (f_i)_{i\in\Lambda}$ on $\R$ is \emph{arithmetically independent} of a real number $\beta>1$ if zero is an accumulation point of the set 
\begin{equation*}
    \lbrace \log \lambda(f_\iii) + n\log\beta: \iii\in\Lambda^* \text{ and } n\in\N\rbrace.
\end{equation*}
For example, if $\frac{\log \lambda(f_i)}{\log \beta}\not\in\Q$ for some $i\in\Lambda$, then $\Phi$ is arithmetically independent of $\beta$. Recall that a \emph{Pisot number} is a real algebraic integer greater than $1$, all of whose other Galois conjugates are less than $1$ in absolute value.

\begin{theorem}\label{pointwise_normality}
Let $\Phi = (f_i)_{i \in \Lambda}$ be a conformal iterated function system on $\R$ with attractor $K$, $\mu$ be a non-atomic quasi-Bernoulli measure on $K$, and $\beta > 1$ be a Pisot number. If $\Phi$ is arithmetically independent of $\beta$, then for any continuously differentiable diffeomorphism $h$, $h_*\mu$-almost every $x$ is $\beta$-normal. Furthermore, if $\Phi$ is totally non-linear and consists of real analytic maps, then the statement holds for every Pisot number $\beta > 1$.
\end{theorem}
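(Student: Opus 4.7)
The plan is to combine Theorem~\ref{thm-unif-scaling-1} with the equidistribution machinery of Hochman and Shmerkin \cite{HochmanShmerkin2015}, which ties the $\beta$-normality of a point to the statistics of the scenery flow there. By Theorem~\ref{thm-unif-scaling-1}, $\mu$ is uniformly scaling and generates some ergodic fractal distribution $P$, and non-atomicity of $\mu$ prevents $P$ from being concentrated on the zero measure, so $P$ is supported on measures of positive dimension.

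I would next dispose of the diffeomorphism $h$. In dimension one, the scenery of $h_*\mu$ at $h(x)$ is asymptotic to the push-forward of the scenery of $\mu$ at $x$ under multiplication by the scalar $Dh|_x$; since sceneries are normalized to live on $B(0,1)$ and since orthogonal transformations of $\R$ reduce to $\pm 1$, the measure $h_*\mu$ is again uniformly scaling and generates either $P$ or its reflection, both of which are ergodic fractal distributions. It therefore suffices to establish $\beta$-normality for $\mu$ itself, after possibly pre-composing with $x\mapsto -x$.

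The crux is then to invoke the equidistribution theorem of \cite{HochmanShmerkin2015}: if $\mu$ is a uniformly scaling measure on $\R$ whose generated ergodic fractal distribution $P$ satisfies an appropriate non-arithmeticity condition with respect to a Pisot number $\beta$, then $\mu$-almost every $x$ is $\beta$-normal. The arithmetic independence of $\Phi$ and $\beta$ supplies exactly this non-arithmeticity: the assumption that $0$ is an accumulation point of $\{\log\lambda(f_\iii)+n\log\beta:\iii\in\Lambda^*,\ n\in\N\}$ prevents the typical logarithmic scales driving $P$ from being trapped in a coset of $\log\beta\cdot\Z$, while the Pisot property of $\beta$ is what converts scenery-flow equidistribution into Parry-measure equidistribution of $(\beta^n x \bmod 1)_n$. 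For the real-analytic totally non-linear case, a result of Hochman and Shmerkin implies that the additive group generated by $\{\log\lambda(f_\iii):\iii\in\Lambda^*\}$ is dense in $\R$; consequently $\Phi$ is arithmetically independent of every Pisot $\beta>1$, and the previous step concludes.

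The principal obstacle I expect is checking that the equidistribution mechanism of \cite{HochmanShmerkin2015}, originally formulated for self-conformal measures satisfying the open set condition, transfers verbatim to the quasi-Bernoulli setting once uniform scaling is established. The argument essentially reads out of the ergodic fractal distribution generated by $\mu$, so replacing their appeal to self-conformality under the open set condition by Theorem~\ref{thm-unif-scaling-1} should suffice, but this replacement demands a careful audit of the proof in \cite{HochmanShmerkin2015} to confirm that no step secretly relies on separation beyond what uniform scaling already provides.
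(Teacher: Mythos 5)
Your outline correctly identifies the high-level strategy: Theorem~\ref{thm-unif-scaling-1} supplies uniform scaling, the equidistribution machinery of \cite{HochmanShmerkin2015} converts a spectral condition on the generated fractal distribution $P$ into $\beta$-normality, and the handling of the diffeomorphism $h$ via the $d=1$ scalar-derivative argument (with the reflection absorbed into a conjugate iterated function system with the same asymptotic contraction ratios) matches what the paper does. You also correctly flag that the real substance is whether the \cite{HochmanShmerkin2015} spectral argument transfers without the open set condition. The problem is that you defer exactly that step to a ``careful audit,'' and that step is the entire point of the theorem.

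Concretely, what is missing is the proof that $\tfrac{k}{\log\beta}$ is not an eigenvalue of $P$ for any $k\neq 0$. Your heuristic --- that arithmetic independence keeps ``the typical logarithmic scales driving $P$'' from lying in a coset of $\log\beta\cdot\Z$ --- is not an argument, and it is not at all clear that uniform scaling alone lets you run the \cite{HochmanShmerkin2015} computation, which under the open set condition exploits the fact that magnified cylinders of $\mu$ are honest conformal copies of $\mu$. Without separation, a tangent measure of $\mu$ is only equivalent (up to bounded Radon--Nikodym derivative) to a possibly continuous convex combination $\nu\cdot\mu$ of conformal images of $\mu$; this is Lemma~\ref{lemma-structureoftangents}. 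One then needs Proposition~\ref{prop-main-1}, via the argument in the proof of Theorem~\ref{thm-unif-scaling-3}, to transfer the $t_0$-generation property (from Lemma~\ref{lemma-tangentt0}, applied to a $P$-typical tangent) back to $h_*\mu$ along a subsequence, and then to $\mu$ after conjugating the iterated function system by $h$. Only at that point can one invoke arithmetic independence: choose $\iii$ with $e(-t_0\log\lambda(f_\iii))\neq 1$, and run a Lebesgue--Besicovitch differentiation argument in a shrinking neighbourhood of the fixed point of $f_\iii$ to force the eigenfunction constant $c$ to satisfy $c=e(-t_0\log\lambda(f_\iii))\,c$, hence $c=0$, a contradiction. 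None of this is present or even sketched in your proposal, so as written it does not establish the theorem; it restates the problem at the spectral level and leaves the new input (Lemma~\ref{lemma-structureoftangents} plus the $t_0$-scenery transfer via Proposition~\ref{prop-main-1}) unaddressed.
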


We remark that Theorem \ref{pointwise_normality} generalizes the recent result of Algom, Baker, and Shmerkin \cite{AlgomBakerShmerkin2022} into the self-conformal setting. In the case when $\mu$ is self-conformal, $\beta$ is an integer, and $\Phi$ has certain non-linearity, the conclusion of Theorem  \ref{pointwise_normality} can be alternatively obtained by combining the results in \cite{AlgomHertzWang2021,AlgomHertzWang2023,BakerSahlsten2023} on Fourier decay of self-conformal measures with the Davenport-Erd\H{o}s-LeVeque criterion for pointwise normality. We note that the Fourier decay results in \cite{AlgomHertzWang2021,AlgomHertzWang2023,BakerSahlsten2023} rely on $\mu$ being the projection of a Bernoulli measure on $\Lambda^\N$, and do not seem to have applications on $\beta$-normal numbers when $\beta$ is non-integer.

Combining Theorem \ref{thm-unif-scaling-1} with a version of the local entropy averages of Hochman and Shmerkin \cite{HochmanShmerkin2012} found in \cite{Hochmanpreprint}, we obtain the following result on the resonance between quasi-Bernoulli measures on self-conformal sets. We say that two exact-dimensional probability measures $\mu$ and $\nu$ on $\R$ \emph{resonate}, if
$$
\dim (\mu*\nu) < \min\lbrace 1,\dim\mu + \dim\nu\rbrace;
$$
otherwise they are said to \emph{dissonate}. Two conformal iterated function systems $\Psi = (f_i)_{i \in \Gamma}$ and $\Phi = (g_j)_{j \in\Lambda}$ on $\R$ are called \emph{independent} if zero is an accumulation point of the set
$$
\{\log \lambda(f_{\mathtt{i}})-\log \lambda(g_\mathtt{j}) : \mathtt{i}\in \Gamma^* \text{ and } \mathtt{j}\in \Lambda^*\}.
$$
For instance, if $\frac{\log \lambda(f_i)}{\log \lambda(g_j)} \not\in \Q$ for some $(i,j) \in \Gamma \times \Lambda$, then $\Psi $ and $\Phi$ are independent.

\begin{theorem}\label{prop-dissonance}
Let $\Psi = (f_i)_{i \in \Gamma}$ and $\Phi = (g_j)_{j \in\Lambda}$ be conformal iterated function systems on $\R$ with attractors $K_\Psi$ and $K_\Phi$, respectively. If $\Psi $ and $\Phi$ are independent, then for any quasi-Bernoulli measures $\mu$ on $K_\Psi$ and $\nu$ on $K_\Phi$ we have
$$
\dim(\mu*\nu) = \min \lbrace 1, \dim \mu + \dim \nu \rbrace,
$$
that is, $\mu$ and $\nu$ dissonate.
\end{theorem}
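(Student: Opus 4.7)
The plan is to combine Theorem \ref{thm-unif-scaling-1} with the local entropy averages technique of Hochman and Shmerkin \cite{HochmanShmerkin2012, Hochmanpreprint}. By Theorem \ref{thm-unif-scaling-1}, the quasi-Bernoulli measures $\mu$ and $\nu$ are uniformly scaling and generate ergodic fractal distributions $P$ and $Q$ respectively; in particular, both measures are exact-dimensional with dimensions $\dim\mu$ and $\dim\nu$, and the upper bound $\dim(\mu * \nu) \leq \min\{1,\dim\mu + \dim\nu\}$ is automatic since $\mu * \nu$ is the pushforward of $\mu \times \nu$ under addition.

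For the matching lower bound, the local entropy averages lemma in the form available in \cite{Hochmanpreprint} gives
\begin{equation*}
  \dim(\mu * \nu) \geq \liminf_{T \to \infty} \frac{1}{T} \int_0^T H_1(\mu_{x,t} * \nu_{y,t}) \dd t
\end{equation*}
for $(\mu \times \nu)$-almost every $(x,y)$, where $H_1$ denotes the normalized entropy at unit scale. The strategy is then to evaluate the right-hand side using the fact that the individual sceneries of $\mu$ and $\nu$ equidistribute towards $P$ and $Q$, and to show that the arithmetic independence of $\Psi$ and $\Phi$ promotes this to joint equidistribution of $(\mu_{x,t},\nu_{y,t})_{t\geq 0}$ toward the product $P \times Q$. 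Once joint equidistribution is in hand, Fubini's theorem yields
\begin{equation*}
  \dim(\mu * \nu) \geq \iint H_1(\alpha * \beta) \dd P(\alpha) \dd Q(\beta),
\end{equation*}
and a standard computation for ergodic fractal distributions on $\R$ (see \cite{Hochmanpreprint}) evaluates this double integral to $\min\{1, \dim\mu + \dim\nu\}$, completing the dissonance statement.

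The main obstacle is establishing the joint equidistribution from the arithmetic independence condition. Heuristically, the accumulation of zero by the differences $\log\lambda(f_\iii) - \log\lambda(g_\jjj)$ with $\iii \in \Gamma^*$ and $\jjj \in \Lambda^*$ prevents any rational synchronization between the two scenery flows, and hence the product dynamics on $\PP(B(0,1))^2$ should inherit the ergodicity of each factor. Making this rigorous requires revisiting the constructive generation of $P$ and $Q$ in the proof of Theorem \ref{thm-unif-scaling-1}: one must simultaneously discretize the sceneries of $\mu$ and $\nu$ along a common family of scales and invoke a Weyl-type equidistribution statement in the subgroup of $\R$ generated by $\{\log\lambda(f_\iii),\log\lambda(g_\jjj)\}$, which is dense by the independence assumption. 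This is the analogue in the scenery-flow framework of the standard trick (going back to Furstenberg's $\times a,\times b$ problem) by which log-irrationality of contraction ratios decouples two otherwise correlated symbolic systems, and it is precisely the place where the independence hypothesis of the theorem is used.
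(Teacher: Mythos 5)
Your overall framing—pass through a scenery-flow / local-entropy-averages argument and let arithmetic independence do the heavy lifting—is in the right spirit, but there are two genuine gaps that the paper's proof handles in a different way, and as written your argument does not close.

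First, you do not need (and should not try to prove) that the joint scenery $(\mu_{x,t},\nu_{y,t})_{t\ge 0}$ equidistributes toward the \emph{product} $P\times Q$. That statement is much stronger than what is required and is not established by your Weyl-type heuristic; the relationship between the two sceneries need not be an independent coupling, and the ``decoupling'' you allude to is precisely the subtle content one would have to prove. The paper sidesteps this entirely: by Lemma~\ref{lemma-productuscselfconformal}, the product $\mu\times\nu$ already generates \emph{some} ergodic fractal distribution $P'$ along a subsequence with $\dim P'\ge\dim\mu+\dim\nu$, with no independence assumption needed. Whether or not $P'$ equals $P\times Q$ is irrelevant; all that matters is what $P'$ projects to.

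Second, your final step asserts that $\iint H_1(\alpha*\beta)\dd P(\alpha)\dd Q(\beta)$ ``evaluates by a standard computation to $\min\{1,\dim\mu+\dim\nu\}$.'' There is no such standard computation: this identity \emph{is} the dissonance statement, and it must use the independence hypothesis. For instance if $\Psi=\Phi$ and $\mu=\nu$ this integral can be strictly smaller than $\min\{1,2\dim\mu\}$. The way the paper actually uses independence is to show that the generated distribution $P'$ is invariant under $(S_{\log\lambda(f_\iii)-\log\lambda(g_\jjj)},\mathrm{Id})$ for all $\iii,\jjj$ (this follows from the self-conformal structure by pushing through $f_\iii\times g_\jjj$, applying Proposition~\ref{prop-Hochman1.9}, and letting the neighborhoods shrink to fixed points), hence non-singular with respect to the dense one-parameter group $\overline{A}$ of dilations. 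Only then does Proposition~\ref{prop-Hochman1.38} give $E_{P'}(\pi)=\min\{1,\dim P'\}$ for $\pi(x,y)=x+y$, and Theorem~\ref{thm-Hochman1.23} converts this into the lower bound $\dim(\mu*\nu)\ge E_{P'}(\pi)$. In short: the missing ingredient in your proposal is Hochman's non-singularity mechanism (Proposition~\ref{prop-Hochman1.38}); it is there, not in any joint-equidistribution statement, that the hypothesis of independence actually gets used.
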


The proof of Theorem \ref{prop-dissonance} is postponed until \S \ref{section-dissonance}. The question of resonance between dynamically defined sets and measures has a long history, dating back to Furstenberg's conjecture on sums of $\times 2$ and $\times 3$-invariant sets from the 1960s: In a sense, the general idea is that two dynamically defined objects should not resonate unless the objects or the defining dynamics are arithmetically similar. Analogues of Theorem \ref{prop-dissonance} have been verified for various classes of dynamically defined sets and measures; for example, see \cite{PeresShmerkin2009, Moreira1998, NazarovPeresShmerkin2012, HochmanShmerkin2012}. Very recently, Bruce and Jin \cite{BruceJin2022} proved a version of Theorem \ref{prop-dissonance} for a rich class of dynamically defined measures on homogeneous self-similar sets, containing canonical projections of ergodic measures. We remark that Theorem \ref{prop-dissonance} does not easily extend to ergodic measures, while the results of \cite{BruceJin2022} do not appear to be extendable to measures on self-conformal sets. Previously, Theorem \ref{prop-dissonance} was known for self-conformal measures only under the strong separation condition; see \cite[Theorem 1.39]{Hochmanpreprint}. It is also an interesting question whether an analog of Theorem \ref{prop-dissonance} holds in higher dimensions. Very recently, the third author showed in \cite{Pyorala2023} that this is the case for a class of planar self-affine measures.

Finally, let us assume that the maps $f_i\colon \R^2 \to \R^2$, $f_i(x)=A_ix+a_i$, are affine such that
\begin{equation*}
  A_i =
  \begin{pmatrix}
    \rho_i & 0 \\ 
    0 & \lambda_i
  \end{pmatrix}
\end{equation*}
for all $i \in \Lambda$. In this case, we say that $\Phi = (f_i)_{i \in \Lambda}$ is a \emph{diagonal iterated function system}. The following projection theorem generalizes that of Ferguson, Fraser, and Sahlsten \cite{FergusonFraserSahlsten2015} for measures with no grid structure, i.e., without any assumption on the translation vectors $a_i$. We say that $\Phi$ satisfies the \emph{rectangular strong separation condition} if $f_i([-1,1]^2) \cap f_j([-1,1]^2) = \emptyset$ whenever $i \ne j$. A diagonal iterated function system $\Phi$ has {\em independent eigenvalues} if zero is an accumulation point of the set
$$
\{\log \lambda_{\mathtt{i}}-\log \rho_{\mathtt{j}}: \mathtt{i},\mathtt{j}\in \Lambda^*\}.
$$
For instance, if $\frac{\log \lambda_{i}}{\log \rho_{i}} \not\in \Q$ for some $i\in \Lambda$, then $\Phi$ has independent eigenvalues.

\begin{theorem} \label{self-affine_projection}
  Let $\Phi$ be a diagonal iterated function system on $\R^2$ with attractor $K$ satisfying the rectangular strong separation condition. If $\Phi$ has independent eigenvalues, then for any self-affine measure $\mu$ on $K$ with simple Lyapunov spectrum, we have
  $$
    \dim \pi_*\mu = \min\lbrace 1,\dim\mu\rbrace
  $$
  for all non-principal orthogonal projections $\pi$.
\end{theorem}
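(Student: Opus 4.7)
The plan is a two-step reduction: first invoke the uniform scaling of $\mu$ (one of the main results advertised in the introduction for self-affine measures on carpets with the rectangular strong separation condition), and then use local entropy averages to pin down the dimension of non-principal projections.

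To establish uniform scaling, code $\mu$ by the Bernoulli measure $\bar\mu$ on $\Lambda^\N$, and write $S_n^\rho(\omega) = -\sum_{k<n}\log\rho_{\omega_k}$ and analogously $S_n^\lambda(\omega)$. Under the rectangular strong separation condition, for $\mu$-typical $x = \Pi(\omega)$ and scale $e^{-t}$, choosing $n = n(x,t)$ so that $B(x,e^{-t})$ is comparable to the cylinder $f_{\omega|n}([-1,1]^2)$, the magnification $\mu_{x,t}$ agrees up to bounded distortion with an affine image of $\mu$ under the diagonal map $\operatorname{diag}(e^{t-S_n^\rho(\omega)},\, e^{t-S_n^\lambda(\omega)})$. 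The independent eigenvalues hypothesis, which says that zero is an accumulation point of $\{\log \lambda_\mathtt{i} - \log\rho_\mathtt{j}\}$, together with the simple Lyapunov spectrum hypothesis, forces the joint process $t\mapsto (t-S_n^\rho(\omega),\, t-S_n^\lambda(\omega))$ to equidistribute under an ergodic measure on a suitable factor space. Consequently the Cesàro averages of $\delta_{\mu_{x,t}}$ converge, and $\mu$ generates an ergodic fractal distribution $P$ whose support consists of diagonally stretched affine copies of $\mu$.

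For the second step, let $\pi_\theta$ be a non-principal orthogonal projection, so that $\sin\theta \cos\theta \neq 0$. The local entropy averages of \cite{HochmanShmerkin2012} and \cite{Hochmanpreprint} applied to the uniformly scaling $\mu$ yield
\begin{equation*}
\dim \pi_\theta \mu \geq \int \dim \pi_\theta \nu \dd P(\nu).
\end{equation*}
For $P$-typical $\nu$, which after translation is of the form $T_*\mu$ with $T = \operatorname{diag}(r_1, r_2)$, the composition $\pi_\theta \circ T$ is a non-degenerate linear functional with both coefficients non-zero, hence up to a positive scalar another non-principal orthogonal projection $\pi_{\theta'}$. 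Now the coordinate projections $(\pi_x)_*\mu$ and $(\pi_y)_*\mu$ are quasi-Bernoulli self-similar measures on $\R$ generated by conformal iterated function systems with contraction ratios $(\rho_i)$ and $(\lambda_i)$ respectively, and the independent eigenvalues condition on $\Phi$ is precisely the independence hypothesis needed to apply Theorem \ref{prop-dissonance} to this pair. Dissonance of these one-dimensional projections, transferred through the scenery-flow identity that relates $\pi_{\theta'}\mu$ to a scaled convolution of its marginal projections (the simple Lyapunov spectrum ensuring both marginals carry a non-trivial share of $\dim\mu$), yields $\dim \pi_{\theta'}\mu = \min\{1,\dim\mu\}$, and integration against $P$ completes the proof.

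The main obstacle is the first step. Unlike in the conformal setting of Theorem \ref{thm-unif-scaling-1}, the two coordinate directions contract at distinct rates and the magnified copies of $\mu$ are not themselves self-similar; one must track the two-dimensional random walk $(S_n^\rho, S_n^\lambda)$ and exploit the independent eigenvalues condition to secure ergodic equidistribution of its skew-product with the shift. A secondary difficulty arises in the second step because $\mu$ is not literally a product of its marginals, so passing from the one-dimensional dissonance of Theorem \ref{prop-dissonance} to a statement about $\pi_\theta \mu$ requires the fractal-distribution formalism to exhibit the appropriate convolution structure at the level of tangent measures along the scenery flow.
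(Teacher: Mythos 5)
There are two genuine gaps. First, your description of the magnifications $\mu_{x,t}$ as ``diagonally stretched affine copies of $\mu$'' is wrong for a self-affine measure with simple Lyapunov spectrum. Since $\lambda_{\omega|_n}$ and $\rho_{\omega|_n}$ decay at genuinely different exponential rates, a ball $B(x,e^{-t})$ is never comparable to a cylinder $f_{\omega|_n}([-1,1]^2)$. At the scale where the ball fits inside a single cylinder (that is, $e^{-t}$ comparable to the short side $\lambda_{\omega|_n}$), the preimage $f_{\omega|_n}^{-1}(B(x,e^{-t}))$ is a thin horizontal strip, and the magnification $\mu_{x,t}$ is asymptotically the \emph{product} of a rescaled $x$-marginal $(\pi_*\mu)_{\pi(x),\cdot}$ and the conditional measure $\mu_\iii$ on a vertical fiber --- not an affine image of $\mu$. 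This is exactly what Lemmas \ref{lemma-productstructure} and \ref{lemma-scenerymeasures} establish, and the scenery flow must then be run on these marginal-times-slice products (Lemma \ref{lemma-productusc}), not on diagonal copies of $\mu$. What you flag at the end as a ``secondary difficulty'' is in fact the central structural point of the proof.

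Second, the deduction in your step two does not go through. You invoke Theorem \ref{prop-dissonance} for the two coordinate projections $(\pi_x)_*\mu$ and $(\pi_y)_*\mu$ and then appeal to a ``scenery-flow identity that relates $\pi_{\theta'}\mu$ to a scaled convolution of its marginal projections.'' No such identity exists: $\mu$ is not the product of its marginals, so $\pi_\theta\mu$ is not a rescaled $(\pi_x)_*\mu * (\pi_y)_*\mu$ in any sense. Even at the tangent-measure level, what appears is a convolution of $\pi_*\mu$ with a slice $\mu_\iii$, and slices are not quasi-Bernoulli, so Theorem \ref{prop-dissonance} is not applicable to them either. The paper does not use dissonance at all here. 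Once Proposition \ref{thm-self-affine-usc} shows that $\mu$ generates an ergodic fractal distribution $P$ satisfying $P = (S_{-\log\rho_\iii},S_{-\log\lambda_\jjj})_*P$ for all $\iii,\jjj\in\Lambda^*$, the independent-eigenvalues hypothesis gives a dense set of $t$ with $P=(S_t,{\rm Id})_*P$; Proposition \ref{prop-Hochman1.38} then yields $E_P(\pi)=\min\{1,\dim P\}$ directly, and Theorem \ref{thm-Hochman1.23} transfers this to $\dim\pi_*\mu\geq E_P(\pi)$. Your instinct to lower-bound $\dim\pi_\theta\mu$ by $\int\dim\pi_\theta\nu\,\dd P(\nu)$ is the right starting point, but both the structure of $P$-typical $\nu$ and the mechanism for evaluating the integral have to be these, not an affine-copy picture combined with one-dimensional dissonance.
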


The rest of the paper is organized as follows. In \S \ref{section-preliminaries}, we collect some preliminaries and introduce notation. The proofs of Theorems \ref{thm-unif-scaling-1}--\ref{thm-unif-scaling-3} are given in \S \ref{section: proof-scenery-flow}, and the applications to normal numbers and resonance, Theorems \ref{pointwise_normality} and \ref{prop-dissonance}, are proven in \S \ref{section-normalnumbers} and in \S \ref{section-dissonance}, respectively. Finally, in \S \ref{sec-self-affine-proj}, we show that self-affine measures on attractors of diagonal iterated function systems are uniformly scaling and conclude the proof of Theorem \ref{self-affine_projection}.

\section{Preliminaries}\label{section-preliminaries}

For $\alpha \geq 0$ we denote by $\mathcal{C}^{1+\alpha}(\R^d)$ the family of functions $f$ for which there exists a bounded open and convex set $U\subseteq \R^d$ such that $f\colon U \to \R^d$ is differentiable, injective, its differential $x\mapsto Df|_x$ is $\alpha$-Hölder continuous on $U$, $Df|_x \neq 0$ for all $x \in U$, $\Vert Df|_x\Vert^{-1} Df|_x$ is orthogonal for all $x\in U$, and $\sup_{x\in U}\Vert Df|_x\Vert < 1$. Such functions are called \emph{conformal} or, when $d=1$, simply \emph{$\mathcal{C}^{1+\alpha}$-functions}.
 
The collection of all Borel probability measures on a metric space $X$ is denoted by $\mathcal{P}(X)$.  For $\mu,\nu\in \mathcal{P}(X)$ we use the \emph{L\'evy-Prokhorov metric} to measure their distance:
$$
  d_{\rm LP}(\mu,\nu)=\inf \{\varepsilon>0: \mu(A)\le \nu(A^{\varepsilon})+\varepsilon \textrm{ for all Borel sets }A\},
$$
where $A^{\varepsilon}$ is the $\varepsilon$-neighborhood of $A$ in $X$. Note that if $X$ is separable, then the convergence in L\'evy-Prokhorov metric is equivalent to the weak convergence and also the space $(\mathcal{P}(X),d_{\rm LP})$ is separable. Given a measure $\mu$ and a measurable function $f$, we denote the push-forward measure by $f_*\mu = \mu\circ f^{-1}$. We let $\mathcal{D}_n(\R^d)$ be the partition of $\R^d$ to dyadic cubes of side length $2^{-n}$ homothetic to $[0,1)^d$ such that the origin is a vertex. We denote the unique element of $\mathcal{D}_n(\R^d)$ containing $x \in \R^d$ by $\mathcal{D}_n(x)$. For $\mu\in \mathcal{P}(\R^d)$ and $x\in \spt\mu$ we define the \emph{dyadic magnification} of $\mu$ at $x$ by
\begin{equation}\label{eq:muDn}
  \mu^{\mathcal{D}_n(x)}=\frac{1}{\mu(\mathcal{D}_n(x))} (f_{\mathcal{D}_n(x)})_*(\mu|_{\mathcal{D}_n(x)}),
\end{equation}
where $\mu|_{\mathcal{D}_n(x)}$ is the restriction of $\mu$ to $\mathcal{D}_n(x)$ and $f_{\mathcal{D}_n(x)}$ is the unique homothety sending $\mathcal{D}_n(x)$ to $[0,1)^d$. For Radon measures $\mu$ and $\nu$ and $C \geq 1$ we write $\mu \sim_C\nu$ to indicate that $C^{-1} \leq \frac{\mathrm{d}\mu }{\mathrm{d}\nu} \leq C$, where $\frac{\mathrm{d}\mu }{\mathrm{d}\nu}$ is the Radon-Nikodym derivative.

\subsection{Symbolic space} \label{sec:symbolic-space}

Let $\Lambda$ be a finite set with $\#\Lambda\geq 2$. Let $\Lambda^* = \bigcup_{n\in\N} \Lambda^n$ be the set of finite words and for each $\mathtt{i} \in \Lambda^*$ write $|\mathtt{i}|$ to denote the unique integer for which $\mathtt{i}\in\Lambda^{|\mathtt{i}|}$. For a finite word ${\mathtt{i}} = i_1\cdots i_{n-1}i_n \in \Lambda^n$, write $\mathtt{i}^- = i_1\cdots i_{n-1} \in \Lambda^{n-1}$. For an infinite word $\mathtt{j} \in \Lambda^\N$ and $k\in\N$ let $\mathtt{j}|_k \in\Lambda^k$ be the projection of $\jjj$ onto the first $k$ coordinates. Let $\sigma \colon \Lambda^\N \to \Lambda^\N$ be the \emph{left shift} defined by $\sigma\jjj = j_2j_3\cdots$ for all $\jjj = j_1j_2\cdots$. We equip $\Lambda^\N$ with the topology generated by the \emph{cylinder sets}:
$$
  [\mathtt{i}] = \lbrace \mathtt{j}\in\Lambda^\N:\ \mathtt{j}|_{|\mathtt{i}|} = \mathtt{i}\rbrace
$$
for all $\mathtt{i}\in\Lambda^*$. Given an iterated function system $\Phi = (f_i)_{i\in\Lambda}$ with attractor $K$, the \emph{canonical projection} $\Pi \colon \Lambda^\N\to K$ is defined by setting 
$$
\Pi(\mathtt{i}) = \lim_{n\to\infty} f_{i_1}\circ\cdots\circ f_{i_n}(0)
$$
for all $\mathtt{i}=i_1i_2i_3\cdots \in \Lambda^\N$. Note that $\Pi$ is a continuous surjection. To simplify notation we write $f_\mathtt{i} = f_{i_1}\circ\cdots \circ f_{i_n}$ and, if $p_i \in \R$ for all $i \in \Lambda$, also $p_\mathtt{i} = p_{i_1}\cdots p_{i_n}$ for all finite words $\mathtt{i} = i_1\ldots i_n\in\Lambda^*$.

\subsection{Tangent distributions} \label{sec:tangent-distribution}

We recall some of the theory of tangent distributions developed by Hochman in \cite{Hochmanpreprint}. Let $(S_t)_{t \ge 0}$ be the flow in the space $\{\mu \in \PP(\R^d) : 0 \in \spt\mu\}$ defined by
\begin{equation*}
  S_t\mu = \mu_{0,t}.
\end{equation*}
Given $\mu \in \PP(\R^d)$ and $x \in \spt\mu$, the scenery of $\mu$ at $x$ can thus be viewed as the orbit of $(T_x)_*\mu$ under the flow $(S_t)_{t \ge 0}$ as $\mu_{x,t} = S_t(T_x)_*\mu$ for all $t \ge 0$, where $T_x$ is the translation $y \mapsto y-x$.

Let $P$ be a Borel probability measure on $\mathcal{P}(B(0,1))$. We say that $P$ is a \emph{fractal distribution} if it is invariant under $(S_t)_{t\geq0}$ and if for any $\mathcal{A}\subseteq \mathcal{P}(B(0,1))$ with $P(\mathcal{A}) = 1$, for almost every $\mu\in \mathcal{A}$, also $\mu_{x,t}\in \mathcal{A}$ for $\mu$-almost all $x\in B(0,1)$ and every $t\geq 0$ for which $B(x,e^{-t})\subseteq B(0,1)$. Although the flow $(S_t)_{t \ge 0}$ is not continuous and the defining property of the fractal distribution is not closed, the family $\mathcal{FD}$ of all fractal distributions is closed in the weak$^*$-topology; see \cite[Theorem 1.1]{KaenmakiSahlstenShmerkin2015}. Furthermore, we say that $P$ is an \emph{ergodic fractal distribution} if it is a fractal distribution and ergodic under $(S_t)_{t\geq0}$. The set of ergodic fractal distributions is dense in $\mathcal{FD}$; see \cite[Theorem 1.2]{KaenmakiSahlstenShmerkin2015}.

Recall that tangent distributions of $\mu$ at $x$ are the accumulation points of the scenery flow $(\langle \mu \rangle_{x,T})_{T > 0}$ in the weak$^*$-topology. In addition, $\mu$ is uniformly scaling and generates $P$ if $P$ is the unique tangent distribution of $\mu$ at $\mu$-almost all $x$. The following result of Hochman \cite[Theorem 1.6]{Hochmanpreprint} shows that typical measures drawn from fractal distributions are much more structured than arbitrary measures. 

\begin{theorem} \label{thm-Hochman1.6}
  Let $P$ be a fractal distribution. Then $P$-almost every $\nu$ is uniformly scaling and generates the ergodic component of $P$ into which it belongs.
\end{theorem}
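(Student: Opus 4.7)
The plan is to combine an ergodic decomposition of $P$ under the magnification semiflow $(S_t)_{t\geq 0}$ with Birkhoff's pointwise ergodic theorem, using the quasi-Palm property of a fractal distribution to propagate generic behaviour from the origin to $\nu$-a.e.\ base point. Since $\PP(B(0,1))$ is Polish and $(S_t)_{t\geq 0}$ is a Borel semiflow, the invariant measure $P$ admits a measurable ergodic decomposition $P = \int P_\omega \, dP(\omega)$ with each $P_\omega$ being $(S_t)$-invariant and ergodic, and $P$-a.e.\ $\nu$ belonging to a unique component, which I denote $P_{\omega(\nu)}$.

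The first technical step is to show that each ergodic component $P_\omega$ is itself a fractal distribution. Invariance is automatic, so only the quasi-Palm property needs to be transferred. Fixing a countable algebra $(\mathcal{A}_k)$ generating the Borel $\sigma$-algebra of $\PP(B(0,1))$, I would argue by a standard measurable-selection argument that whenever $\mathcal{A}$ satisfies $P_\omega(\mathcal{A})=1$ for $P$-a.e.\ $\omega$, the set $\{\nu : P_{\omega(\nu)}(\mathcal{A})=1\}$ has full $P$-measure, so the quasi-Palm property for $P$ applies and can be disintegrated to yield the analogous statement for almost every component. I expect this inheritance step to be the principal obstacle, as it requires a careful handling of the joint measurability of $\omega\mapsto P_\omega$ together with the defining family of Palm-type conditions.

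Once each ergodic component $P_\omega$ is itself a fractal distribution, Birkhoff's ergodic theorem applied to $(S_t,P_\omega)$ along a countable dense family of bounded continuous test functions yields a set $\mathcal{G}_\omega\subseteq\PP(B(0,1))$ with $P_\omega(\mathcal{G}_\omega)=1$ on which the origin-centered orbit averages $T^{-1}\int_0^T \delta_{S_t\nu}\,dt$ converge weakly to $P_\omega$. Applying the quasi-Palm property of $P_\omega$ to $\mathcal{G}_\omega$, I obtain that for $P_\omega$-a.e.\ $\nu$ and $\nu$-a.e.\ $x$ there is some $t_0 \geq -\log(1-|x|)$ with $\nu_{x,t_0}\in \mathcal{G}_\omega$. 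Using the cocycle identity $\nu_{x,t_0+s}=S_s\nu_{x,t_0}$ one sees that
\begin{equation*}
  \langle \nu \rangle_{x,T} = \frac{1}{T}\int_0^T \delta_{\nu_{x,s}}\, ds
\end{equation*}
differs from the $(S_t)$-orbit average of $\nu_{x,t_0}$ only on an initial interval of fixed length, so both have the same weak$^*$-limit as $T\to\infty$, namely $P_\omega$. Integrating out $\omega$ recovers the claim for $P$-a.e.\ $\nu$: each such $\nu$ is uniformly scaling and generates exactly the ergodic component $P_{\omega(\nu)}$ to which it belongs.
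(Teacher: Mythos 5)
This statement is cited in the paper from Hochman's preprint (\cite[Theorem 1.6]{Hochmanpreprint}) and is not proved there, so there is no ``paper's own proof'' to compare against; I'll assess your proposal against Hochman's argument.

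Your overall structure --- ergodic decomposition, Birkhoff's theorem applied to each component, the quasi-Palm property to re-center at a $\nu$-typical point, and the cocycle identity $\nu_{x,t_0+s}=S_s\nu_{x,t_0}$ to identify the scenery flow of $\nu$ at $x$ with an $(S_s)$-orbit average --- is indeed the correct and standard route, and the last two steps you carry out are fine. The genuine gap is precisely where you locate it, but your proposed fix does not close it. Establishing that $P$-a.e.\ ergodic component $P_\omega$ is itself quasi-Palm is the hard part (Hochman's Theorem~1.3), and the argument you sketch only treats sets $\mathcal{A}$ satisfying $P_\omega(\mathcal{A})=1$ for $P$-a.e.\ $\omega$ simultaneously. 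Such sets are automatically $P$-full, so the quasi-Palm property of $P$ applies trivially; but that is not what is needed. The quasi-Palm property for a single $P_\omega$ must handle sets that are full for $P_\omega$ only, and those sets vary with $\omega$ and need not have full $P$-measure. Your ``measurable-selection plus disintegration'' plan would require first knowing that the re-centering operation $(\nu,x,t)\mapsto\nu_{x,t}$ almost surely preserves the ergodic component function $\nu\mapsto\omega(\nu)$, so that the smeared measure associated with $P$ disintegrates component-by-component against the ergodic decomposition. That invariance of $\omega(\cdot)$ under re-centering is not a formal consequence of $S_t$-invariance (which only gives invariance along the scenery orbit at the origin), and it is exactly the nontrivial content of Hochman's proof; neither a countable generating algebra nor Lusin-type measurable selection alone yields it, because absolute continuity is a statement about all null sets, not about a generating $\pi$-system. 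In short: your skeleton is right and your use of Birkhoff, quasi-Palm, and the cocycle identity is correct, but the inheritance of the quasi-Palm property by ergodic components remains unproven in your write-up, and the specific reduction you propose begs the question rather than resolving it.
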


In fact, every fractal distribution is generated by some uniformly scaling measure; see \cite[Theorem 1.3]{KaenmakiSahlstenShmerkin2015}. In the other direction, Hochman \cite[Theorem 1.7]{Hochmanpreprint} has shown that tangent distributions of any Radon measure have the structure of a fractal distribution.

\begin{theorem} \label{thm-Hochman1.7}
  Let $\mu$ be a Radon measure. Then for $\mu$-almost every $x$, every tangent distribution of $\mu$ at $x$ is a fractal distribution. 
\end{theorem}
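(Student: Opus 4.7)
The plan is to verify the two defining properties of a fractal distribution for every tangent distribution $Q$ at $\mu$-almost every $x$: invariance under the scaling flow $(S_t)_{t\ge 0}$, and the quasi-Palm (typicality-preserving) condition from \S\ref{sec:tangent-distribution}. Invariance falls out directly from the time-averaging in \eqref{eq-sceneryflow}. Using the immediate pointwise identity $S_s\mu_{x,t} = \mu_{x,t+s}$, one has for every $s > 0$,
\[
d_{\rm LP}\bigl(\langle\mu\rangle_{x,T},\, S_s\langle\mu\rangle_{x,T}\bigr) \le \frac{2s}{T},
\]
because the two averages $\frac{1}{T}\int_0^T \delta_{\mu_{x,t}}\dd t$ and $\frac{1}{T}\int_0^T \delta_{\mu_{x,t+s}}\dd t$ differ only on a set of times of total measure at most $2s$. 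Sending $T\to\infty$ along any subsequence defining $Q$ gives $S_sQ = Q$ for every $s \ge 0$.

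The main obstacle, which I expect to require the bulk of the work, is the quasi-Palm property. The key input is the exact pointwise identity
\[
(\mu_{x,t})_{y,s} = \mu_{x + e^{-t}y,\, t+s},
\]
valid whenever $B(y,e^{-s}) \subseteq B(0,1)$, which follows by unwinding the definition of magnification. Using this identity together with the change of variables $z = x + e^{-t}y$ in the inner integral, I would rewrite a ``second-order'' time-space average
\[
\frac{1}{T}\int_0^T \int F\bigl((\mu_{x,t})_{y,s}\bigr)\dd\mu_{x,t}(y)\dd t
\]
against a bounded uniformly continuous test functional $F$ as a weighted double average of $F(\mu_{z,t+s})$ over pairs $(z,t)$ with $z \in B(x,e^{-t})$. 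The heart of the proof is then to show that this has the same limit as $\frac{1}{T}\int_0^T F(\mu_{x,t+s})\dd t$ for $\mu$-almost every $x$; by the invariance already established, this common limit is $\int F\dd Q$, which is precisely the quasi-Palm condition tested against $F$ and $s$.

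The comparison step is where the real technical difficulty lies. The averaging ball $B(x,e^{-t})$ is larger than the scale $e^{-(t+s)}$ of the magnification $\mu_{z,t+s}$ only by the \emph{fixed} factor $e^s$, so a Vitali / maximal-function argument on the product space $(\R^d \times [0,\infty),\, \mu \otimes \dd t)$, combined with a Lebesgue-differentiation argument carried out uniformly in $t$ using the uniform continuity of $F$, should yield the asymptotic equality of the two averages for $\mu$-almost every $x$. The boundary contributions from points $y$ close to $\partial B(0,1)$, for which the pointwise identity above is inapplicable, are handled via uniform continuity of $F$ and standard mass-distribution estimates controlling the ratios $\mu(B(x,r))/\mu(B(x,\lambda r))$. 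Finally, running this for a countable dense family $\{F_j\}$ of test functionals and a countable dense set of rationals $s > 0$, and intersecting the resulting full-measure sets, yields the quasi-Palm property simultaneously for all $F_j$ and $s$ at $\mu$-a.e.\ $x$; by a monotone-class argument this extends to the full form of the condition in the definition of a fractal distribution.
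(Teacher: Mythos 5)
This theorem is not proved in the paper: it is quoted verbatim from Hochman (\cite[Theorem~1.7]{Hochmanpreprint}), so there is no ``paper's own proof'' to compare against, and any assessment has to be against Hochman's argument.

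Your first part is correct and standard: the identity $S_s\mu_{x,t}=\mu_{x,t+s}$ does hold (a direct computation from the definition of $\mu_{x,t}$), and the $O(s/T)$ telescoping bound gives $S_sQ=Q$ for every subsequential limit $Q$ of $\langle\mu\rangle_{x,T}$.

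Your second part is in the right spirit --- the identity $(\mu_{x,t})_{y,s}=\mu_{x+e^{-t}y,\,t+s}$ is correct, and the ``second-order average'' is indeed how Hochman proceeds --- but you have correctly located the step that carries all the weight and then left it as a one-sentence appeal to Vitali/Lebesgue differentiation. Two points deserve to be named explicitly because they are where a careless execution actually breaks. First, what you are proposing to compare is not a classical Lebesgue differentiation: the integrand $w\mapsto F(\mu_{w,t+s})$ itself depends on the scale $t$ of the averaging ball $B(x,e^{-t})$, so the standard differentiation theorem does not apply directly; Hochman gets around this with a Fubini/density argument on the product space $\spt\mu\times[0,\infty)$ together with a maximal inequality, and this is substantially more than ``standard.'' Second, even if you establish $Q^\diamond_s=Q$ for all $s$ in a countable dense set, the definition of fractal distribution used in the paper requires $\nu_{y,t}\in\mathcal{A}$ for \emph{every} admissible $t$, not almost every $t$. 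Passing from the countable dense set of $s$'s to all $s$ requires (right-)continuity of $t\mapsto\nu_{y,t}$, which can fail at scales $t$ where $\nu$ charges $\partial B(y,e^{-t})$; ruling these out is precisely the kind of sphere-boundary control (cf.\ Lemma~\ref{lem:tech1} here, or \cite[Theorem~3.22]{KaenmakiSahlstenShmerkin2015GMT}) that usually comes \emph{from} knowing one has a fractal distribution, so one must take some care to avoid circularity. Your ``monotone-class argument'' sentence papers over exactly this. So: right outline, same key identity as Hochman, but the comparison step and the ``all $t$'' upgrade are genuine gaps as written, not routine details.
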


We also recall the result of Hochman \cite[Proposition 1.9]{Hochmanpreprint} on the effect of pushing measures forward under diffeomorphisms on the generated distributions.

\begin{proposition}\label{prop-Hochman1.9}
  Let $\mu$ be a Radon measure on $\R^d$. Then for any diffeomorphism $f\colon\R^d\to\R^d$ we have
  \begin{equation*}
    \lim_{n\to\infty} d_{\rm LP} \biggl( \frac{1}{n}\int_0^n (Df|_x)_* \delta_{\mu_{x,t}}\dd t, \frac{1}{n}\int_0^n \delta_{(f_*\mu)_{f(x),t}}\dd t\biggr) = 0
  \end{equation*}
  for $\mu$-almost all $x$.
\end{proposition}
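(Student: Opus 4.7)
My plan is to establish, for $\mu$-almost every $x$ and every $\eta>0$, that for all sufficiently large $t$ the individual scenery measures $(Df|_x)_*\mu_{x,t}$ and $(f_*\mu)_{f(x),t}$ lie within $O(\eta)$ of each other in the L\'evy--Prokhorov metric, up to a normalization discrepancy that can only be absorbed after Cesàro averaging. Once this is in hand, the claim follows readily: the initial segment $[0,T_\eta(x)]$ on which the sceneries may differ uncontrollably contributes at most $T_\eta(x)/n\to 0$ to the averaged distributions, and on the remaining interval the integrand is uniformly within $O(\eta)$ in $d_{\rm LP}$, a property preserved under averaging; letting $\eta\to 0$ then finishes the proof.

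The key linearization step exploits differentiability. Fix a typical $x$ and set $L=Df|_x$. Given $\eta>0$, there exists $r_0>0$ such that $\|f(y)-f(x)-L(y-x)\|\le \eta\|y-x\|$ for all $y\in B(x,r_0)$. Conjugating with the zoom-in maps $\phi_t(y)=e^t(y-x)$ at $x$ and $\psi_t(z)=e^t(z-f(x))$ at $f(x)$, this translates into the statement that the two maps $\psi_t\circ f$ and $L\circ\phi_t$ are uniformly $O(\eta)$-close on $B(x,re^{-t})$ for any fixed $r>0$, once $re^{-t}\le r_0$. An elementary lemma then yields $d_{\rm LP}((L\circ\phi_t)_*\nu,(\psi_t\circ f)_*\nu)\le O(\eta)$ for any probability measure $\nu$ supported on such a ball; applied to $\nu=\mu$ restricted appropriately, this handles the comparison of the unnormalized pushforwards.

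The main obstacle is matching the denominators. The two sceneries are the above pushforwards normalized by $\mu(B(x,e^{-t}))$ and $\mu(f^{-1}(B(f(x),e^{-t})))$, respectively, and the second set is approximately the ellipsoid $x+e^{-t}L^{-1}(B(0,1))$---a bounded linear distortion of the first. For a general Radon measure the ratio of these masses can oscillate as $t\to\infty$ and need not converge pointwise; this is precisely why the proposition is stated only for Cesàro averages. I would handle this by comparing both averaged distributions against a common reference distribution built from the sceneries of $\mu$ at $x$ with an appropriate constant time-shift $s=s(L)$, and using that a bounded shift in the time parameter is asymptotically invisible after averaging over $[0,n]$. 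This reduces the problem to showing that the averaged sceneries of $\mu$ at $x$ under two time-shifts converge to the same limit, which follows from the flow invariance implicit in the Cesàro construction together with the fact that the normalization factors differ only by the multiplicative discrepancy $\mu(B(x,e^{-t}))/\mu(x+e^{-t}L^{-1}(B(0,1)))$, whose logarithm is a bounded function averaging to zero in density.
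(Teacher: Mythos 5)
The paper does not prove this statement itself: Proposition~\ref{prop-Hochman1.9} is quoted directly from Hochman's preprint, so there is no argument in the paper to compare your proposal against, and I can only judge it on its own merits.

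Your linearization step is the right starting point, but the treatment of the normalizing constants contains a genuine error, and this is where the argument fails. You assert that $\log\bigl(\mu(B(x,e^{-t}))/\mu(x+e^{-t}L^{-1}B(0,1))\bigr)$ is a bounded function of $t$ that averages to zero in density. Neither claim holds for a general Radon measure. Boundedness would require a doubling property that $\mu$ need not have, and a telescoping computation shows that for $\mu$-a.e.\ $x$ one has
\[
\frac{1}{T}\int_0^T \log\frac{\mu(B(x,re^{-t}))}{\mu(B(x,e^{-t}))}\dd t \longrightarrow (\log r)\cdot \dim(\mu,x),
\]
which is nonzero as soon as $r\neq 1$ and $\dim(\mu,x)>0$; since $B(0,\|L\|^{-1})\subseteq L^{-1}B(0,1)\subseteq B(0,\|L^{-1}\|)$, the ellipsoid ratio behaves the same way. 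Moreover, the proposed time-shift $s=s(L)$ can only absorb the conformal (scalar) part of $L$; for genuinely anisotropic $L$ the set $L^{-1}B(0,1)$ is a true ellipsoid and no time-shift maps one normalizing region onto the other, so the discrepancy is not resolved.

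The missing idea is the correct reading of $(Df|_x)_*$ in this context. It is not the naive pushforward on $\mathcal{P}(B(0,1))$ (which would indeed produce the normalization mismatch you describe); it is the \emph{renormalized} pushforward, in which the linear map is applied to the unrestricted zoom $S_t T_x\mu$ --- with which any linear map commutes, since $S_t$ is a homothety --- and the result is then restricted to $B(0,1)$ and normalized. This is exactly what is used tacitly in the proof of Theorem~\ref{thm-unif-scaling-1}, where $Dh|_x$ is identified with the flow $S_{-\log h'(x)}$. With this reading, $(Df|_x)_*\mu_{x,t}$ and $(f_*\mu)_{f(x),t}$ carry the \emph{same} normalizing constant up to the linearization error, and the proof reduces to your linearization estimate, a density-one control of the mass that sceneries place near spheres (compare Lemma~\ref{lem:tech1}), and shift-invariance of the Ces\`aro average. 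As written, your proposal introduces a spurious discrepancy and then disposes of it with claims that are false.
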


For a Radon measure $\mu$ the \emph{upper and lower pointwise dimensions} of $\mu$ at $x \in \R^d$ are
\begin{align*}
  \udimloc(\mu,x) &= \limsup_{r \to 0} \frac{\log \mu(B(x,r))}{\log r}, \\
  \ldimloc(\mu,x) &= \liminf_{r \to 0} \frac{\log \mu(B(x,r))}{\log r},
\end{align*}
respectively. For a given measure, the pointwise dimensions naturally introduce four different dimensions. For example, the \emph{lower Hausdorff dimension} of $\mu$ is $\ldimh\mu = \essinf_{x \sim \mu} \ldimloc(\mu,x)$. If there exists a constant $s$ such that $\udimloc(\mu,x) = \ldimloc(\mu,x) = s$ for $\mu$-almost all $x \in \R^d$, then we write $\dim\mu = s$ and say that $\mu$ is \emph{exact-dimensional}. For example, quasi-Bernoulli measures on self-conformal sets are exact-dimensional; see Feng and Hu \cite[Theorem 2.8]{FengHu2009}. Hochman \cite[Lemma 1.18]{Hochmanpreprint} showed that if $P$ is a fractal distribution, then $P$-almost every measure is exact-dimensional. This allows us to define the \emph{dimension} of a fractal distribution $P$ to be
\begin{equation*}
  \dim P = \int \dim\nu\dd P(\nu) = \int \frac{\log \nu(B(0,r))}{\log r} \dd P(\nu)
\end{equation*}
for all $0<r<1$. Although dimensions of measures are usually highly discontinuous, the function $P \mapsto \dim P$ defined on $\mathcal{FD}$ is continuous; see \cite[Lemma 3.20]{KaenmakiSahlstenShmerkin2015GMT}. Furthermore, Hochman \cite[Proposition 1.19]{Hochmanpreprint} (see also the remark after \cite[Theorem 3.21]{KaenmakiSahlstenShmerkin2015GMT}) showed that tangent distributions encode all information on dimensions:

\begin{proposition} \label{prop-Hochman1.19}
  Let $\mu$ be a Radon measure. Then
  \begin{align*}
    \udimloc(\mu,x) &= \sup\{\dim P : P \in \mathcal{FD} \text{ is a tangent distribution of $\mu$ at $x$}\}, \\ 
    \ldimloc(\mu,x) &= \inf\{\dim P : P \in \mathcal{FD} \text{ is a tangent distribution of $\mu$ at $x$}\},
  \end{align*}
  for $\mu$-almost all $x \in \R^d$. In particular, if $\mu$ is uniformly scaling and generates $P \in \mathcal{FD}$, then $\mu$ is exact-dimensional and $\dim\mu = \dim P$.
\end{proposition}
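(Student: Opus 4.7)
The plan is to exploit the telescoping identity
\[\log\mu_{x,t}(B(0,e^{-s})) = \log\mu(B(x, e^{-(t+s)})) - \log\mu(B(x, e^{-t})),\]
valid whenever $\mu(B(x,e^{-t})) > 0$, which converts log-masses of the magnified measure into differences of log-masses of the original at consecutive scales. As a preliminary, I would verify that the dimension in the statement is well-defined: by $S_t$-invariance of any fractal distribution $P$ and the identity above (applied with $\mu$ replaced by a $P$-typical $\nu$ at $x = 0$), the function $s \mapsto F(s) := \int \log\nu(B(0,e^{-s}))\dd P(\nu)$ is additive, hence linear on $(0,\infty)$, giving $F(s) = -s\,c$ for a constant $c = \dim P$ independent of the radius used in the definition.

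Next I would integrate the telescoping identity over $t \in [0,T]$ and divide by $T$ to obtain
\[\int \log\nu(B(0,e^{-s}))\dd\langle\mu\rangle_{x,T}(\nu) = \frac{1}{T}\int_T^{T+s}\log\mu(B(x,e^{-u}))\dd u - \frac{1}{T}\int_0^s\log\mu(B(x,e^{-u}))\dd u.\]
The second term vanishes as $T \to \infty$ for $\mu$-a.e.\ $x$. Let $T_k \to \infty$ be a subsequence along which $\langle\mu\rangle_{x,T_k}$ converges weakly to a tangent fractal distribution $Q$; by Theorem \ref{thm-Hochman1.7} every tangent distribution is of this form at $\mu$-a.e.\ $x$. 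Weak convergence of the left-hand side yields $-s\dim Q$, and a mean value argument shows the right-hand side equals $s \cdot \log\mu(B(x,e^{-T_k'}))/T_k$ for some $T_k' \in [T_k,T_k+s]$. Hence $\log\mu(B(x,e^{-T_k'}))/\log e^{-T_k'} \to \dim Q$, placing $\dim Q$ between $\ldimloc(\mu,x)$ and $\udimloc(\mu,x)$.

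For the reverse inequalities, pick subsequences $r_k \downarrow 0$ along which $\log\mu(B(x,r_k))/\log r_k$ tends to $\udimloc(\mu,x)$ or to $\ldimloc(\mu,x)$, set $T_k = -\log r_k$, and by weak compactness of $\mathcal{P}(\mathcal{P}(B(0,1)))$ pass to a further subsequence along which $\langle\mu\rangle_{x,T_k}$ converges to a tangent distribution $Q$; by Theorem \ref{thm-Hochman1.7}, $Q$ is a fractal distribution at $\mu$-a.e.\ $x$, and the computation above identifies $\dim Q$ with the selected pointwise dimension. The final assertion follows immediately: if $\mu$ is uniformly scaling and generates $P$, then $P$ is the only tangent distribution at $\mu$-a.e.\ $x$, so $\ldimloc(\mu,x) = \udimloc(\mu,x) = \dim P$ and $\mu$ is exact-dimensional with $\dim\mu = \dim P$.

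The main technical obstacle is justifying the weak-convergence step for $\int\log\nu(B(0,e^{-s}))\dd\langle\mu\rangle_{x,T_k}(\nu)$, since $\nu \mapsto \log\nu(B(0,e^{-s}))$ is neither continuous nor bounded. I would handle this in two stages: first approximate $B(0,e^{-s})$ from inside and outside by balls whose boundaries carry no mass for $Q$-typical $\nu$ (available for a.e.\ $s$ by Fubini against $Q$), restoring continuity of $\nu \mapsto \nu(B(0,e^{-s\pm\eta}))$ at $Q$-a.e.\ point; second, truncate the logarithm from below at $-N$ and send $N \to \infty$, using that the right-hand side of the integrated telescoping identity is $O(s)$ uniformly in $k$ (since $|\log\mu(B(x,r))|/|\log r|$ is eventually bounded at $\mu$-typical $x$), which yields uniform integrability and controls the truncation tail.
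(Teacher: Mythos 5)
The paper itself does not prove this proposition; it is quoted from Hochman's work (with the KSS survey), so your attempt can only be judged on its own terms. The overall architecture — the telescoping identity $\log\mu_{x,t}(B(0,e^{-s}))=\log\mu(B(x,e^{-(t+s)}))-\log\mu(B(x,e^{-t}))$, averaging over $t\in[0,T]$, the sandwich via monotonicity of $u\mapsto\log\mu(B(x,e^{-u}))$, attainment along chosen scales, and Theorem \ref{thm-Hochman1.7} to know the limits are fractal distributions — is sound and is essentially the standard route. But there is a genuine gap precisely at the step you flag, and your proposed fix does not close it. Writing $f(\nu)=\log\nu(B(0,e^{-s}))\le 0$, the inequality $\limsup_k\int f\dd\langle\mu\rangle_{x,T_k}\le\int f\dd Q$ is free (truncate, use weak convergence after the radius perturbation, then monotone convergence); this yields $\dim Q\le\udimloc(\mu,x)$ for every tangent fractal distribution $Q$ and the attainment of the infimum. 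The reverse inequality $\int f\dd Q\le\liminf_k\int f\dd\langle\mu\rangle_{x,T_k}$ — which you need both for $\dim Q\ge\ldimloc(\mu,x)$ for all tangent distributions and for attainment of the supremum — is exactly where mass can escape to $-\infty$, and a uniform bound on the first moments (your ``$O(s)$ uniformly in $k$'') does not imply uniform integrability. Concretely, a vanishing proportion of times $t\in[0,T_k]$ at which the increment $\log\mu(B(x,e^{-t}))-\log\mu(B(x,e^{-(t+s)}))$ is huge can carry a non-vanishing share of the time average while being invisible in the weak limit $Q$, so without further input the limit distribution could have dimension strictly below the value computed from the boundary terms.

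What rules this scenario out at $\mu$-almost every $x$ is a quantitative density estimate, not weak convergence: a Besicovitch covering argument gives $\mu\{x:\mu(B(x,e^{-s}r))\le\eps\,\mu(B(x,r))\}\le C_{d,s}\,\eps$ uniformly in $r$, and combining this with a Borel--Cantelli or maximal-inequality argument over scales one obtains, for $\mu$-a.e.\ $x$, a bound on $\sup_T\frac1T\int_0^T g_t\mathds{1}\{g_t>N\}\dd t$ (where $g_t$ is the increment above) that tends to $0$ as $N\to\infty$; this is the uniform integrability you need, and it is also where the ``for $\mu$-a.e.\ $x$'' in the statement enters beyond Theorem \ref{thm-Hochman1.7}. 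This estimate, or some substitute for it, is the technical heart of Hochman's and K\"aenm\"aki--Sahlsten--Shmerkin's proofs and is missing from your sketch; as written, only ``$\dim Q\le\udimloc$ for all $Q$'' and ``some $Q$ with $\dim Q\le\ldimloc$'' are justified. Two smaller points: for the attainment of the infimum you should place the liminf-realizing scales at the right endpoint of the averaging window (take $T_k=-\log r_k-s$), since the liminf only gives the eventual one-sided bound in the direction needed there, whereas the supremum case works as you wrote it; and finiteness of $\int\log\nu(B(0,e^{-s}))\dd P(\nu)$ for a fractal distribution, which your linearity argument tacitly assumes, is itself not automatic, though you may take it from the cited definition of $\dim P$.
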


\section{Self-conformal measures are uniformly scaling} \label{section: proof-scenery-flow}

In this section, we prove Theorems \ref{thm-unif-scaling-1}--\ref{thm-unif-scaling-3}. Let $\mu \in \mathcal{P}(X)$ and $\mathcal{A}$ be a finite partition of $X$. The \emph{Shannon entropy} of $\mu$ with respect to $\mathcal{A}$ is
$$
  H(\mu,\mathcal{A}) = -\sum_{A\in\mathcal{A}}\mu(A)\log \mu(A).
$$
If, in addition, $\nu \in \mathcal{P}(X)$ is such that $\nu(A)=0$ implies $\mu(A)=0$ for all $A \in \mathcal{A}$, then the \emph{Kullback–Leibler divergence} between $\nu$ and $\mu$ with respect to $\mathcal{A}$ is
\begin{equation*}
  D_{\mathrm{KL}}(\mu \,\|\, \nu, \mathcal{A}) = -\sum_{A\in\mathcal{A}}\mu(A)\log \frac{\nu(A)}{\mu(A)}.
\end{equation*}
If the partition in use is clear from the context, then we omit it in notation. We recall the classical Gibbs' inequality.

\begin{lemma}[Gibbs' inequality] \label{lem:Gibbs}
  Let $\mathcal{I}$ be a finite set, and $(p_i)_{i\in\mathcal{I}}$ and $(q_i)_{i\in\mathcal{I}}$ probability vectors such that $q_i=0$ implies $p_i=0$. Then
  \begin{enumerate}
    \item\label{it:gibbs1} $D_{\mathrm{KL}}((p_i)_{i\in\mathcal{I}} \,\|\, (q_i)_{i\in\mathcal{I}}) \ge 0$,
    \item\label{it:gibbs2} $D_{\mathrm{KL}}((p_i)_{i\in\mathcal{I}} \,\|\, (q_i)_{i\in\mathcal{I}}) < \eps$ implies $\sum_{i\in\mathcal{I}}|p_i-q_i|<\sqrt{2\varepsilon\#\mathcal{I}}$ for all $\varepsilon>0$.
  \end{enumerate}
\end{lemma}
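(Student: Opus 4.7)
\emph{Proof plan.} Part \eqref{it:gibbs1} follows from Jensen's inequality applied to the concave function $\log$. Restricting the sum to indices $i$ with $p_i>0$ (which, by hypothesis, have $q_i>0$), we obtain
\[
  -D_{\mathrm{KL}}((p_i)_{i \in \mathcal{I}}\,\|\,(q_i)_{i \in \mathcal{I}}) = \sum_{i :\, p_i > 0} p_i \log \frac{q_i}{p_i} \le \log \sum_{i :\, p_i > 0} q_i \le \log 1 = 0.
\]
Equivalently, one applies the elementary inequality $\log t \le t-1$ termwise and uses $\sum_i p_i = \sum_i q_i = 1$.

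Part \eqref{it:gibbs2} is a (slightly weakened) form of Pinsker's inequality. The plan is to establish the sharp bound $\sum_i |p_i-q_i| \le \sqrt{2 D_{\mathrm{KL}}(p\,\|\,q)}$; since $\#\mathcal{I} \ge 1$ this immediately yields the claimed $\sqrt{2\varepsilon\#\mathcal{I}}$. I would proceed in two steps. First, verify the two-point inequality
\[
  K(a,b) := a\log\frac{a}{b} + (1-a)\log\frac{1-a}{1-b} \ge 2(a-b)^2 \quad \text{for } a,b\in(0,1).
\]
A clean way is to differentiate in $b$, giving $\partial_b K(a,b) = (b-a)/(b(1-b))$, and then integrate and use $t(1-t) \le \tfrac14$:
\[
  K(a,b) = \int_{\min\{a,b\}}^{\max\{a,b\}} \frac{|t-a|}{t(1-t)}\dd t \ge 4\int_{\min\{a,b\}}^{\max\{a,b\}} |t-a|\dd t = 2(a-b)^2.
\]
Second, for any $S\subseteq\mathcal{I}$ the log-sum inequality applied to the two blocks $S$ and $\mathcal{I}\setminus S$ yields $D_{\mathrm{KL}}(p\,\|\,q) \ge K(p(S), q(S))$, where $p(S) = \sum_{i\in S} p_i$. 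Choosing $S = \{i : p_i \ge q_i\}$ gives $p(S) - q(S) = \tfrac12 \sum_i |p_i-q_i|$, and combining with the two-point inequality gives Pinsker and hence \eqref{it:gibbs2}.

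The only genuinely technical step is the two-point inequality, and the integral representation handles it in a few lines; the rest is bookkeeping. The hypothesis $q_i = 0 \Rightarrow p_i = 0$ together with the standard conventions $0\log 0 = 0$ and $0\log\infty = 0$ takes care of degenerate terms automatically. The appearance of $\#\mathcal{I}$ in the statement (rather than the tight $\sqrt{2\varepsilon}$ one gets from Pinsker) is slack and suggests the authors may instead prefer a route via Cauchy--Schwarz $(\sum_i |p_i-q_i|)^2 \le \#\mathcal{I} \sum_i (p_i-q_i)^2$ combined with an $\ell^2$-bound on $\sum_i (p_i-q_i)^2$; this is equally short, but the Pinsker proof above yields the stronger conclusion.
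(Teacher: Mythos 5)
Your proof is correct, and you correctly anticipated in your closing remark the route the paper actually takes. The paper establishes the pointwise inequality $-\log(1-x) \ge x + \tfrac{x^2}{2\max\{1,1-x\}}$, which after the substitution $x = (p_i - q_i)/p_i$ and summation yields the $\ell^2$-bound $D_{\mathrm{KL}}(p\,\|\,q) \ge \tfrac12\sum_i (p_i - q_i)^2$; both claims then follow at once, the first trivially and the second via Cauchy--Schwarz, which is where the $\#\mathcal{I}$ factor enters. Your alternative goes through Pinsker's inequality --- the partition into $S = \{p_i \ge q_i\}$ and its complement, the log-sum/data-processing reduction to the two-point case, and the integral representation of the binary divergence --- and delivers the sharper conclusion $\sum_i|p_i - q_i| \le \sqrt{2D_{\mathrm{KL}}}$ independent of $\#\mathcal{I}$. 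The paper's route is a single elementary calculus inequality followed by one line of bookkeeping; yours is slightly longer (you need the two-point lemma and the block reduction) but strictly stronger and more modular. Since the lemma is only applied in Proposition~\ref{prop-main-1} where $\#\mathcal{I} = \#\mathcal{D}_1 = 2^d$ is a fixed constant absorbed into the $o_\delta(1)$ notation, the extra strength of Pinsker is not exploited downstream, so the paper's shorter route is the natural choice in context. One minor point worth flagging in your write-up: the integral representation of $K(a,b)$ and the bound $t(1-t)\le\tfrac14$ are stated for $a,b\in(0,1)$; the boundary cases $p(S)\in\{0,1\}$ or $q(S)\in\{0,1\}$ need a short limiting or separate argument, though the hypothesis $q_i=0\Rightarrow p_i=0$ rules out the genuinely problematic case $q(S)=0<p(S)$.
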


\begin{proof}
	It is well known that $-\log(1-x)\geq x+\frac{x^2}{2\max\{1,1-x\}}$ for all $x<1$. Hence,
	\begin{align*}
    D_{\mathrm{KL}}((p_i)_{i\in\mathcal{I}} \,\|\, (q_i)_{i\in\mathcal{I}}) &\geq \sum_{i\in\mathcal{I}}p_i\Biggl(\frac{p_i-q_i}{p_i}+\frac{\bigl(\frac{p_i-q_i}{p_i}\bigr)^2}{2\max\bigl\{1,1-\frac{p_i-q_i}{p_i}\bigr\}}\Biggr)\\
	  &= \sum_{i\in\mathcal{I}}\frac{(p_i-q_i)^2}{2\max\{p_i,q_i\}}\geq\frac{1}{2}\sum_{i\in\mathcal{I}}(p_i-q_i)^2,
	\end{align*}
	which implies the claims of the lemma.
\end{proof}

If $(p_i)_{i\in\mathcal{I}}$ and $(q_i)_{i\in\mathcal{I}}$ are probability vectors, then we write
\begin{equation*}
	\dist((p_i)_{i\in\mathcal{I}},(q_i)_{i\in\mathcal{I}}) = \sum_{i\in\mathcal{I}}|p_i-q_i|
\end{equation*}
and observe that
\begin{equation}\label{eq:forLP}
	\dist (( \mu(I))_{I\in \mathcal{D}_\ell}, ( \nu(I))_{I\in \mathcal{D}_\ell})\leq 2^{-\ell} \quad\Rightarrow\quad d_{\rm LP}(\mu,\nu)\leq 2^{-\ell}
\end{equation}
for all $\ell\in\N$. Let us next state two technical lemmas.

\begin{lemma}\label{lem:tech1}
  Let $\eta$ be a Radon measure with $\ldimh\eta>0$. Then for $\eta$-almost every $x$ and every $\eps>0$ there is $\rho>0$ such that
  \begin{equation} \label{eq:sup-of-difference}
	  \limsup_{N \to \infty} \frac{1}{N} \sum_{k=0}^{N-1} \sup_{\rho \le r \le 1} \eta_{x,k}(B(0,r+\rho) \setminus B(0,r-\rho)) < \eps.
  \end{equation}
\end{lemma}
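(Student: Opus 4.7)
The plan is to reduce the problem to a statement about tangent distributions of $\eta$, where the positive lower Hausdorff dimension will force annular masses to vanish as $\rho\to 0$. By Theorem \ref{thm-Hochman1.7} and Proposition \ref{prop-Hochman1.19}, for $\eta$-almost every $x$ every tangent distribution $P$ of $\eta$ at $x$ is a fractal distribution satisfying $\dim P \ge \ldimloc(\eta,x) \ge \ldimh\eta > 0$. Setting
\begin{equation*}
  f_\rho(\nu) = \sup_{\rho \le r \le 1} \nu(B(0, r+\rho) \setminus B(0, r-\rho)),
\end{equation*}
the quantity inside the $\limsup$ in \eqref{eq:sup-of-difference} is $\frac{1}{N}\sum_{k=0}^{N-1} f_\rho(\eta_{x,k})$. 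The function $f_\rho$ is bounded by $1$ and upper semi-continuous in the weak-$*$ topology; approximating it pointwise from above by continuous functions and using that the weak-$*$ accumulation points of $\frac{1}{N}\sum_k \delta_{\eta_{x,k}}$ are precisely the tangent distributions of $\eta$ at $x$ gives, for $\eta$-a.e.\ $x$,
\begin{equation*}
  \limsup_{N\to\infty}\frac{1}{N}\sum_{k=0}^{N-1} f_\rho(\eta_{x,k}) \le \sup_P \int f_\rho\, dP,
\end{equation*}
with the supremum ranging over tangent distributions at $x$.

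It then suffices to show $\lim_{\rho \to 0}\int f_\rho\, dP = 0$ for every fractal distribution $P$ with $\dim P > 0$. Because $f_\rho(\nu)$ measures the largest $2\rho$-jump of the radial distribution function $F_\nu(r) = \nu(B(0,r))$ on $[0,1]$, one has the pointwise limit $\lim_{\rho\to 0} f_\rho(\nu) = \sup_r \nu(\{|y|=r\})$. Since $f_\rho$ is bounded by $1$, bounded convergence reduces the problem to showing that $P$-almost every $\nu$ puts no mass on any sphere centered at the origin; for then $F_\nu$ is continuous, hence uniformly continuous on $[0,1]$, forcing $f_\rho(\nu) \to 0$. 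To accomplish this I would combine the $(S_t)_{t\ge 0}$-invariance of $P$ with its quasi-Palm-type property from the definition of a fractal distribution: applying the ergodic theorem to the bounded function $\nu\mapsto \nu(\{|y|=r_0\})$ kills the $P$-expectation of every fixed-radius spherical atom, and the quasi-Palm property (which identifies the distribution of magnifications of $\nu$ around generic points of $\spt\nu$ with $P$) is used to patch this into the exclusion of spherical atoms at all radii simultaneously.

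The main obstacle is precisely this last step. Controlling spherical atoms at a single radius is straightforward once $\dim P > 0$, and a Fubini argument excludes them at Lebesgue-a.e.\ radius; but the exceptional set of "bad" $\nu$'s a priori depends on the radius, so a direct union over radii fails. Transferring control from one distinguished radius to every radius requires careful use of the quasi-Palm property — moving the distinguished origin to a generic point of $\spt\nu$ — together with the positive-dimension hypothesis, which is what prevents the putative spherical atoms from forming a $\sigma$-finite but infinite-mass family.
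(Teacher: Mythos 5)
Your reduction to the claim that fractal distributions of positive dimension do not charge spheres centered at the origin correctly identifies the crux of the lemma, and the route to that reduction (tangent distributions, $f_\rho$, bounded convergence) is essentially the same mechanism the paper exploits, though the paper packages it as a contradiction argument (extracting bad distributions $P^\rho$ and taking a weak$^*$ limit as $\rho\to0$). The gap is that you stop exactly where the hard work begins: you acknowledge you cannot complete the argument that $P$-a.e.\ $\nu$ puts no mass on any sphere, and the sketch of a quasi-Palm transfer is not a proof. The paper does not reprove this fact; it invokes \cite[Theorem~3.22]{KaenmakiSahlstenShmerkin2015GMT}, which says precisely that a fractal distribution of positive dimension is supported on measures that assign zero mass to the boundary of every ball. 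Had you cited that result, your argument would close.

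Two smaller technical slips are worth flagging. First, the discrete averages $\frac{1}{N}\sum_{k=0}^{N-1}\delta_{\eta_{x,k}}$ have weak$^*$ accumulation points that are $S_1$-invariant but not in general $(S_t)_{t\ge0}$-invariant, so they are not literally tangent distributions and need not be fractal distributions; one must pass to $\int_0^1 S_t(\cdot)\dd t$, which is why the paper explicitly adapts the proof of \cite[Proposition~5.5(3)]{Hochmanpreprint}. Second, since $B(0,r)$ denotes a closed ball, the annulus $B(0,r+\rho)\setminus B(0,r-\rho)$ is neither open nor closed, so $f_\rho$ is not obviously upper semi-continuous as claimed; one needs a small adjustment of $\rho$ or a boundary-continuity argument to justify passing from $\limsup_N \frac{1}{N}\sum f_\rho(\eta_{x,k})$ to an integral against the limiting distribution. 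Both points are standard but should be addressed explicitly.
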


\begin{proof}
Let us argue by contradiction: for some $\varepsilon>0$ and any $\rho>0$, we find a sequence $(N_k^\rho)_{k\in\N}$ along which the left-hand side of \eqref{eq:sup-of-difference} exists as a limit, is at least $\varepsilon$, and that the sequence $\frac{1}{N_k^\rho}\sum_{k=0}^{N_k^\rho-1}\delta_{\eta_{x,k}}$ converges to a tangent distribution $P^\rho$. Furthermore, by adapting \cite[proof of Proposition 5.5(3)]{Hochmanpreprint}, we see that $\int_0^1 S_tP^\rho\dd t$ is a fractal distribution and, by applying Markov's inequality, it gives mass at least $\varepsilon^2$ to measures which further give mass at least $\varepsilon^2$ to the $\rho$-neigbourhood of the boundary of a closed ball centered at the origin. Let $P$ be a weak$^*$ accumulation point of $P^\rho$ as $\rho\to 0$ and notice that, by \cite[Theorem 1.1]{KaenmakiSahlstenShmerkin2015}, $P$ is a fractal distribution. Moreover, 
\begin{align*}
	\dim P &= \iint_0^1 \frac{\log S_t\zeta(B(0,2^{-1}))}{\log 2^{-1}}\dd t \dd P(\zeta) \\
	&= \lim_{\rho\to 0} \iint_0^1 \frac{\log S_t \zeta(B(0,2^{-1}))}{\log 2^{-1}}\dd t \dd P^\rho(\zeta) \ge \ldimh\eta > 0
\end{align*}
since the inner Lebesgue integral is a continuous function of $\zeta$. Recalling that $P$ is supported on measures which give positive mass to a boundary of a ball, we see that this is impossible by \cite[Theorem 3.22]{KaenmakiSahlstenShmerkin2015GMT}.
\end{proof}

If $\mu$ is a Radon measure on $\R^d$ and $A \subseteq \R^d$ is a Borel set with $0<\mu(A)<\infty$, then we write $\mu_A = \mu(A)^{-1}\mu|_A$.

\begin{lemma}\label{lem:tech2}
For any $r,\tau>0$ the following holds for all small enough $\eps' \ge \rho > 0$: If $\eta$ and $\lambda$ are Borel probability measures on $[-1,1]^d$ and $B=B(x,r)$ is a closed ball such that
\begin{align*}
	d_{\mathrm{LP}}(\eta,\lambda) &< \rho, \\ 
	\lambda(B(x,r+\rho)) &\le \lambda(B(x,r-\rho)) + \eps', \\ 
	\min\{\eta(B),\lambda(B)\} &\ge \tau,
\end{align*}
then
\begin{equation*} 
	d_{\mathrm{LP}}(\eta_B,\lambda_B) < O(\eps'/\tau).
\end{equation*}
\end{lemma}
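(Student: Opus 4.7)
The plan is to use the L\'evy-Prokhorov bound and the annulus-thinness assumption on $\lambda$ to first show that the total masses $\eta(B)$ and $\lambda(B)$ are close, and then transfer the L\'evy-Prokhorov bound between $\eta$ and $\lambda$ to a bound between the normalized restrictions $\eta_B$ and $\lambda_B$. Throughout I assume $\rho \le \eps'$.

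\textbf{Step 1: comparing total masses.} From $d_{\rm LP}(\eta,\lambda)<\rho$ applied to the closed ball $B(x,r-\rho)$ and its $\rho$-neighbourhood $B(x,r)=B$, and from the annulus assumption on $\lambda$, I obtain
\begin{equation*}
	\lambda(B) \le \lambda(B(x,r+\rho)) \le \lambda(B(x,r-\rho))+\eps' \le \eta(B)+\rho+\eps',
\end{equation*}
and by a symmetric argument applied to $B$ and its $\rho$-neighbourhood $B(x,r+\rho)$,
\begin{equation*}
	\eta(B) \le \lambda(B(x,r+\rho))+\rho \le \lambda(B)+\eps'+\rho.
\end{equation*}
Hence $|\eta(B)-\lambda(B)| \le \rho + \eps' \le 2\eps'$, which together with $\min\{\eta(B),\lambda(B)\}\ge\tau$ gives $\lambda(B)/\eta(B)\le 1+2\eps'/\tau$ and symmetrically.

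\textbf{Step 2: transferring the L\'evy-Prokhorov bound.} Fix a Borel set $A\subseteq \R^d$. Since $A\cap B\subseteq B=B(x,r)$, the $\rho$-neighbourhood $(A\cap B)^{\rho}$ is contained in $B(x,r+\rho)$. The annulus assumption therefore yields
\begin{equation*}
	\lambda((A\cap B)^{\rho}) \le \lambda((A\cap B)^{\rho}\cap B) + \lambda(B(x,r+\rho)\setminus B(x,r-\rho)) \le \lambda(A^{\rho}\cap B) + \eps'.
\end{equation*}
Combined with $d_{\rm LP}(\eta,\lambda)<\rho$ applied to $A\cap B$, this gives
\begin{equation*}
	\eta(A) \ge \eta_B(A) \cdot \eta(B), \qquad \eta(A\cap B) \le \lambda(A^{\rho}\cap B) + \eps' + \rho.
\end{equation*}
Dividing by $\eta(B)\ge\tau$ and using Step 1,
\begin{equation*}
	\eta_B(A) \le \frac{\lambda(A^{\rho}\cap B)}{\eta(B)} + \frac{\eps'+\rho}{\tau} \le \lambda_B(A^{\rho})\Bigl(1+\frac{2\eps'}{\tau}\Bigr) + \frac{2\eps'}{\tau}.
\end{equation*}
Since $\lambda_B(A^\rho)\le 1$ and $\rho\le\eps'$, this is bounded above by $\lambda_B(A^{\delta})+\delta$ for some $\delta=O(\eps'/\tau)$ provided $\eps'$ is small enough that $\delta\ge\rho$. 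Swapping the roles of $\eta$ and $\lambda$ yields the reverse inequality, so $d_{\rm LP}(\eta_B,\lambda_B) < O(\eps'/\tau)$ as claimed.

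\textbf{Main obstacle.} The only real issue is bookkeeping: one must verify that the error terms produced by normalization (the ratio $\lambda(B)/\eta(B)$) are compatible with the required $O(\eps'/\tau)$ bound, and that $\rho$ can be chosen small compared with $\eps'$ so that the $\rho$-neighbourhoods occurring on the right-hand side are absorbed into the larger $\delta$-neighbourhoods. Both are addressed by the assumption that $\eps'\ge\rho$ is sufficiently small.
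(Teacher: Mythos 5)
Your proposal is correct and follows essentially the same route as the paper: you use the annulus bound on $\lambda$ to keep $\rho$-neighbourhoods of $A \cap B$ essentially inside $B$, apply the L\'evy--Prokhorov bound, and normalize using $\eta(B),\lambda(B) \ge \tau$, exactly as in the paper's single chain of inequalities, merely reorganized into a mass-comparison step followed by a transfer step. One small caveat: the closing claim that swapping the roles of $\eta$ and $\lambda$ yields the reverse inequality is not literally valid since the annulus hypothesis is imposed only on $\lambda$, but it is also unnecessary, as the paper's one-sided definition of $d_{\rm LP}$ means the inequality $\eta_B(A) \le \lambda_B(A^\delta)+\delta$ for all $A$ already gives $d_{\rm LP}(\eta_B,\lambda_B) \le \delta$.
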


\begin{proof}
For any measurable set $A$, the assumptions on $\eta$ and $\lambda$ imply
\begin{align*}
	\eta_B(A) &= \eta(B)^{-1}\eta(B \cap A) \\ 
	&\le (\lambda(B(x,r-\rho))-\rho)^{-1} (\lambda(B^\rho \cap A^\rho)+\rho) \\ 
	&\le (\lambda(B)-2\eps')^{-1} \lambda(B \cap A^\rho) + 2\eps'(\lambda(B)-2\eps')^{-1} \\ 
	&\le \frac{1}{1-2\eps'/\tau} \lambda(B)^{-1} \lambda(B \cap A^\rho) + \frac{3\eps'}{\tau} \\ 
	&\le \lambda_B(A^{O(\eps'/\tau)}) + O(\eps'/\tau)
\end{align*}
and, similarly,
\begin{equation*}
	\lambda_B(A) \le \eta_B(A^{O(\eps'/\tau)}) + O(\eps'/\tau)
\end{equation*}
which verifies the claim. 
\end{proof}

The main ingredient in the proofs of Theorems \ref{thm-unif-scaling-1}--\ref{thm-unif-scaling-3} is the following proposition. We remark that the proposition does not assume any condition on the smallness of entropy or dimension of $\nu$. Write
\begin{equation*}
  \nu\cdot \mu = \int h_*\mu \dd\nu(h)
\end{equation*}
for all Borel probability measures $\mu$ on $[-1,1]^d$ and $\nu$ supported on $\mu$-measurable $\R^d \to \R^d$ functions.

\begin{proposition}\label{prop-main-1}
  Let $\mu$ and $\nu$ be Borel probability measures such that $\mu$ is supported on $[-1,1]^d$ and $\nu$ supported on $\mu$-measurable $\R^d \to \R^d$ functions. For any $\varepsilon>0$ there exist $N\in\N$ and $\delta>0$ such that the following holds for all $n\geq N$: If
  \begin{enumerate}
    \item\label{it:mainprop1} $h_*\mu (\{x : \frac{1}{n}\sum_{k=1}^n H((h_*\mu)^{\mathcal{D}_k(x)},\mathcal{D}_1)\ge \alpha\}) \ge 1-\delta$ for $\nu$-almost all $h$,
    \item\label{it:mainprop2} $H(\nu\cdot\mu,\mathcal{D}_n) \le n(\alpha+\delta)$,
    \item\label{it:mainprop3} $\ldimh\nu\cdot\mu > 0$,
  \end{enumerate}
  for some $\alpha\ge 0$, then 
  \begin{equation}\label{eq:prop-main-1:2}
    \iint \frac{1}{n}\int_1^n d_{\rm LP}((\nu\cdot\mu)_{y,t}, (h_*\mu)_{y,t})\dd t\dd (h_*\mu)(y)\dd\nu(h) < \varepsilon.
  \end{equation}
\end{proposition}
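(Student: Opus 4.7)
Set $\eta = h_*\mu$ and $\eta' = \nu\cdot\mu$, so that $\eta' = \int \eta\,\dd\nu(h)$. The strategy has three stages. First, I will derive a small average Kullback--Leibler divergence for dyadic blowups from the entropy gap encoded by \eqref{it:mainprop1} and \eqref{it:mainprop2}. Second, I will lift this from $\mathcal{D}_1$ to $\mathcal{D}_\ell$ via the chain rule, and then invoke Gibbs (Lemma~\ref{lem:Gibbs}\eqref{it:gibbs2}) together with \eqref{eq:forLP} to obtain Lévy--Prokhorov closeness of the blowups at an arbitrarily small scale. Finally, I will transfer this conclusion from dyadic blowups to Euclidean sceneries using Lemmas~\ref{lem:tech1} and \ref{lem:tech2}, where assumption \eqref{it:mainprop3} enters.

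\textbf{Steps 1 and 2 (entropy and KL bounds).} The telescoping identity $\frac{1}{n}H(\zeta,\mathcal{D}_n) = \int \frac{1}{n}\sum_{k=0}^{n-1} H(\zeta^{\mathcal{D}_k(y)}, \mathcal{D}_1)\,\dd\zeta(y)$ applied to $\zeta = \eta'$, combined with the cube-wise identity $H(\int\eta\,\dd\nu(h))= \int H(\eta)\,\dd\nu(h)+\int D_{\mathrm{KL}}(\eta\,\|\,\eta')\,\dd\nu(h)$, gives
\begin{equation*}
\frac{1}{n}H(\eta',\mathcal{D}_n) = \iint \frac{1}{n}\sum_{k=0}^{n-1} H(\eta^{\mathcal{D}_k(y)},\mathcal{D}_1)\,\dd\eta(y)\,\dd\nu(h) + \mathrm{KL}_n,
\end{equation*}
where $\mathrm{KL}_n = \iint \frac{1}{n}\sum_{k=0}^{n-1} D_{\mathrm{KL}}(\eta^{\mathcal{D}_k(y)}\,\|\,(\eta')^{\mathcal{D}_k(y)},\mathcal{D}_1)\,\dd\eta(y)\,\dd\nu(h)$. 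Assumption \eqref{it:mainprop2} bounds the left side by $\alpha+\delta$, while \eqref{it:mainprop1} and nonnegativity of entropy bound the first term on the right from below by $(1-\delta)\alpha$. Since $\alpha \le d\log 2$, this forces $\mathrm{KL}_n \le C\delta$ for some $C=C(d)$. Feeding this into the chain rule for KL divergence through the nested refinements $\mathcal{D}_k \prec \cdots \prec \mathcal{D}_{k+\ell}$ and then averaging yields
\begin{equation*}
\iint \frac{1}{n-\ell}\sum_{k=0}^{n-\ell-1} D_{\mathrm{KL}}(\eta^{\mathcal{D}_k(y)}\,\|\,(\eta')^{\mathcal{D}_k(y)},\mathcal{D}_\ell)\,\dd\eta(y)\,\dd\nu(h) \leq 2\ell C\delta
\end{equation*}
whenever $n \geq 2\ell$. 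Markov together with Lemma~\ref{lem:Gibbs}\eqref{it:gibbs2} then shows that, for a fraction $\geq 1-O(2^{\ell d/2}(\ell\delta)^{1/4})$ of triples $(h,y,k)$, the dyadic distributions of $\eta^{\mathcal{D}_k(y)}$ and $(\eta')^{\mathcal{D}_k(y)}$ on $\mathcal{D}_\ell$ are $2^{-\ell}$-close in $\ell^1$. Choosing $\ell$ with $2^{-\ell} < \varepsilon/10$ and then $\delta$ small enough, \eqref{eq:forLP} gives $d_{\mathrm{LP}}(\eta^{\mathcal{D}_k(y)},(\eta')^{\mathcal{D}_k(y)}) < \varepsilon/10$ for a fraction $\geq 1-\varepsilon/10$ of triples.

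\textbf{Step 3 (blowups to sceneries).} For any $t \in [k\log 2,(k+1)\log 2)$, both $\eta_{y,t}$ and $(\eta')_{y,t}$ are obtained (after the common affine rescaling intrinsic to the level-$k$ dyadic blowup) by normalizing the restrictions of $\eta^{\mathcal{D}_k(y)}$ and $(\eta')^{\mathcal{D}_k(y)}$ to the \emph{same} ball $B \subseteq [0,1)^d$ determined by the position of $y$ in $\mathcal{D}_k(y)$ and by $t$. Lemma~\ref{lem:tech1} applied to $\eta'$, which is legitimate thanks to assumption \eqref{it:mainprop3}, yields that for $\eta'$-typical $y$ and most $k$, the measure $(\eta')^{\mathcal{D}_k(y)}$ assigns arbitrarily little mass to an annular neighborhood of $\partial B$. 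A pointwise density argument (or, equivalently, the fact that $d_{\mathrm{LP}}\leq 1$ contributes at most $\varepsilon$ on any set of measure $\varepsilon$) handles the exceptional $(y,t)$ where $\min\{\eta^{\mathcal{D}_k(y)}(B),(\eta')^{\mathcal{D}_k(y)}(B)\}$ is small. Lemma~\ref{lem:tech2} then upgrades the LP-closeness of the blowups to LP-closeness of the sceneries, with error $O(\varepsilon)$. Integrating over $t\in[1,n]$, $y\sim \eta$, $h\sim\nu$, and adjusting constants yields \eqref{eq:prop-main-1:2}.

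\textbf{Main obstacle.} The crux is Step~3: the dyadic blowup $\eta^{\mathcal{D}_k(y)}$ is anchored to a cube \emph{containing} $y$, while the scenery $\eta_{y,t}$ is anchored to a Euclidean ball \emph{centered} at $y$. Bridging this gap — at arbitrary continuous times $t$, under merely quantitative control on mass near ball boundaries — is precisely what Lemmas~\ref{lem:tech1} and \ref{lem:tech2} are engineered for, and this is where assumption \eqref{it:mainprop3} is indispensable. Quantifying the measure of the exceptional triples $(h,y,k,t)$ for which either the boundary-mass or the ball-mass hypothesis of Lemma~\ref{lem:tech2} fails is the most delicate bookkeeping step.
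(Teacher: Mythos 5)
Your Steps 1 and 2 reproduce the paper's argument essentially verbatim: telescoping the entropy of $\nu\cdot\mu$ into level-$1$ conditional entropies, decomposing each term into $\iint H(p^h_{k,1})$ plus a Kullback--Leibler remainder, squeezing the remainder using hypotheses \eqref{it:mainprop1} and \eqref{it:mainprop2}, chain-ruling up to $\mathcal{D}_\ell$, and invoking Gibbs, Markov, and \eqref{eq:forLP}. No issues there.

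The gap is in Step 3, and it is a real one. You assert that for $t \in [k\log 2,(k+1)\log 2)$, the ball $B(y,e^{-t})$ sits inside $\mathcal{D}_k(y)$ (so that both sceneries are obtained by restricting the level-$k$ blowups to a common ball $B\subseteq[0,1)^d$). This is false in general: $y$ may lie arbitrarily close to the boundary of $\mathcal{D}_k(y)$, and for $t$ near $k\log 2$ the radius $e^{-t}$ is comparable to the side length $2^{-k}$, so the ball certainly pokes out. The paper resolves this with two devices that are absent from your proposal. First, it introduces a buffer parameter $m$ and compares the level-$k$ dyadic blowup to the scenery at scale roughly $k+m$, so the ball has radius $\approx 2^{-k-m}\ll 2^{-k}$. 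Second, and crucially, it randomly translates $\nu\cdot\mu$ so that almost every $y$ equidistributes under the doubling map $F$; this guarantees that $B(y,2^{-k-m+1})\subseteq\mathcal{D}_k(y)$ holds for a set of $k$ of density $\ge 1-\delta$. Without this randomization the containment can fail for all $k$ simultaneously (e.g.\ if $y$ is a dyadic rational), and there is no a priori reason a quasi-Bernoulli measure cannot charge such points heavily. You also wave at the second hypothesis of Lemma~\ref{lem:tech2} — the lower bound $\min\{\eta(B),\eta'(B)\}\ge\tau$ — with a ``density argument,'' but this bound is genuinely needed and is supplied in the paper by Proposition~\ref{prop-Hochman1.19}: the pointwise-dimension estimate gives $c\le \eta(B(y,2^{-k-m}))/\eta(B(y,2^{-k}))\le 1$ for a density-$(1-\delta)$ set of $k$, and it is this $c$ that feeds into $\tau$. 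Your ``Main obstacle'' paragraph correctly identifies where the difficulty lies, but the body of Step 3 does not actually resolve it, so the proof as written does not close.
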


\begin{proof}
By elementary properties of entropy, we can write
\begin{equation}\label{eq:proof-Prop-main: 1}
H(\eta,\mathcal{D}_n)=-\sum_{k=0}^{n-1}\sum_{I_k\in \mathcal{D}_k}\sum_{\atop{I_{k+1}\in \mathcal{D}_{k+1}}{I_{k+1}\subset I_k}}\eta(I_{k+1})\log \frac{\eta(I_{k+1})}{\eta(I_k)}
\end{equation}
for all $\eta\in \mathcal{P}(\R^d)$. Applying \eqref{eq:proof-Prop-main: 1} to $\nu\cdot\mu$, we get 
\begin{align*}
H(\nu\cdot\mu,\mathcal{D}_n) &= -\sum_{k=0}^{n-1}\sum_{I_k\in \mathcal{D}_k}\sum_{\atop{I_{k+1}\in \mathcal{D}_{k+1}}{I_{k+1}\subset I_k}}\int \biggl(h_*\mu(I_{k+1})\log \frac{\nu\cdot\mu(I_{k+1})}{\nu\cdot\mu(I_k)}\biggr)\dd\nu(h) \\
&= -\sum_{k=0}^{n-1}\sum_{I_k\in \mathcal{D}_k}\int \biggl(h_*\mu(I_{k})\sum_{\atop{I_{k+1}\in \mathcal{D}_{k+1}}{I_{k+1}\subset I_k}}\frac{h_*\mu(I_{k+1})}{h_*\mu(I_{k})}\log \frac{\nu\cdot\mu(I_{k+1})}{\nu\cdot\mu(I_k)} \biggr)\dd\nu(h)\\
&= -\sum_{k=0}^{n-1} \iint \biggl(\sum_{I\in \mathcal{D}_1} (h_*\mu)^{\mathcal{D}_k(x)}(I) \log (\nu\cdot\mu)^{\mathcal{D}_k(x)}(I)\biggr)\dd(h_*\mu)(x)\dd\nu(h)\\
&= - \iint \sum_{k=0}^{n-1} \biggl(\sum_{I\in \mathcal{D}_1} (h_*\mu)^{\mathcal{D}_k(x)}(I) \log (\nu\cdot\mu)^{\mathcal{D}_k(x)}(I)\biggr)\dd(h_*\mu)(x)\dd\nu(h).
\end{align*}
For simplicity, write
\begin{align*}
  p^h_{k,\ell}(x) &= ( (h_*\mu)^{\mathcal{D}_k(x)}(I))_{I\in \mathcal{D}_\ell}, \\
  q_{k,\ell}(x) &= ( (\nu\cdot\mu)^{\mathcal{D}_k(x)}(I))_{I\in \mathcal{D}_\ell},
\end{align*}
in which case
\begin{align*}
  H(p_{k,1}^h(x)) &= -\sum_{I\in \mathcal{D}_1} (h_*\mu)^{\mathcal{D}_k(x)}(I) \log (h_*\mu)^{\mathcal{D}_k(x)}(I), \\ 
  D_{\mathrm{KL}}(p^h_{k,1}(x) \,\|\, q_{k,1}(x)) &= -\sum_{I\in \mathcal{D}_1} (h_*\mu)^{\mathcal{D}_k(x)}(I) \log \frac{(\nu\cdot\mu)^{\mathcal{D}_k(x)}(I)}{(h_*\mu)^{\mathcal{D}_k(x)}(I)}.
\end{align*}
This allows us to rewrite
\begin{equation} \label{eq:proof-Prop-main: 2}
\begin{split}
  H(\nu\cdot\mu,\mathcal{D}_n) &= \iint \sum_{k=0}^{n-1} H(p_{k,1}^h(x)) \dd(h_*\mu)(x)\dd\nu(h) \\ 
  &\qquad\qquad+\iint \sum_{k=0}^{n-1} D_{\mathrm{KL}}(p^h_{k,1}(x) \,\|\, q_{k,1}(x)) \dd(h_*\mu)(x)\dd\nu(h).
\end{split}
\end{equation}
Recall that, by Gibbs' inequality Lemma~\ref{lem:Gibbs}\eqref{it:gibbs1}, we have
$$
  D_{\mathrm{KL}}(p^h_{k,1}(x) \,\|\, q_{k,1}(x)) \ge 0.
$$
Since, by the assumption \eqref{it:mainprop1},
\begin{equation} \label{eq:nalpha-odelta}
  \int \sum_{k=0}^{n-1} H(p_{k,1}^h(x)) \dd(h_*\mu)(x) \ge n(\alpha-o_{\delta}(1)),
\end{equation}
we get, by combining \eqref{eq:proof-Prop-main: 2} with the assumption \eqref{it:mainprop2},
\begin{equation} \label{eq:KL-small-estimate}
\begin{split}
  \iint \sum_{k=0}^{n-1} D_{\mathrm{KL}}&(p^h_{k,1}(x) \,\|\, q_{k,1}(x))\dd(h_*\mu)(x)\dd\nu(h) \\
  &= H(\nu\cdot\mu,\mathcal{D}_n) - \iint \sum_{k=0}^{n-1} H(p_{k,1}^h(x)) \dd(h_*\mu)(x)\dd\nu(h) \leq o_\delta(1)n.  
\end{split}
\end{equation}
To simplify notation, write
\begin{equation*}
  \mathbb{E}(\,\cdot\,) = \iint \frac{1}{n}\sum_{k=0}^{n-1} \,\cdot\, \dd(h_*\mu)(x)\dd\nu(h).
\end{equation*}
Combining \eqref{eq:KL-small-estimate} and Gibbs' inequality Lemma~\ref{lem:Gibbs}\eqref{it:gibbs2} with Markov's inequality, we get
\begin{equation}\label{eq:forfinal}
\begin{split}
  \mathbb{E}(\mathrm{dist}&(p^h_{k,1}(x),q_{k,1}(x))) \\ &= \mathbb{E}(\mathrm{dist}(p^h_{k,1}(x),q_{k,1}(x))\mathds{1}\{D_{\mathrm{KL}}(p^h_{k,1}(x) \,\|\, q_{k,1}(x))>\sqrt{o_\delta(1)}\}) \\
  &\qquad\qquad+\mathbb{E}(\mathrm{dist}(p^h_{k,1}(x),q_{k,1}(x))\mathds{1}\{D_{\mathrm{KL}}(p^h_{k,1}(x) \,\|\, q_{k,1}(x))\leq\sqrt{o_\delta(1)}\})\\
  &\le 2\mathbb{E}(\mathds{1}\{D_{\mathrm{KL}}(p^h_{k,1}(x) \,\|\, q_{k,1}(x))>\sqrt{o_\delta(1)}\})+2^{d/2}\sqrt{2o_\delta(1)}\\
  &\le \frac{2\mathbb{E}(D_{\mathrm{KL}}(p^h_{k,1}(x) \,\|\, q_{k,1}(x)))}{\sqrt{o_\delta(1)}}+2^{d/2}\sqrt{2o_\delta(1)}\\
  &\le (2+2^{d/2}\sqrt{2})\sqrt{o_\delta(1)},
\end{split}
\end{equation}
where $\mathds{1}A$ is the indicator function of a set $A \subseteq \R^d$. Write $o'_\delta(1) = (2+2^{d/2}\sqrt{2})\sqrt{o_\delta(1)}$ and notice that $o'_\delta(1)\to 0$ as $\delta\to 0$. Simple calculations show that
\begin{align*}
  \iint \dist&(p^h_{k,\ell}(x),q_{k,\ell}(x)) \dd(h_*\mu)\dd\nu(h) \\ 
  &\le \sum_{m=0}^{\ell-1} \iint \dist(p^h_{k+m,1}(x),q_{k+m,1}(x)) \dd(h_*\mu)\dd\nu(h)
\end{align*}
for all $\ell \in \N$. Hence, for every $n \ge \ell$ large enough, \eqref{eq:forfinal} implies
\begin{align*}
  \sum_{k=0}^{n-1} \iint \dist&(p^h_{k,\ell}(x),q_{k,\ell}(x)) \dd(h_*\mu)\dd\nu(h) \\ 
  &\le \ell \sum_{k=0}^{n+\ell-1} \iint \dist(p^h_{k,1}(x),q_{k,1}(x)) \dd(h_*\mu)\dd\nu(h) \le 2o'_\delta(1)\ell n.
\end{align*}
Choose $\ell \in \N$ such that $2^{-(\ell+1)}<\sqrt{2o_\delta'(1)}\leq 2^{-\ell}$ and combine the above inequality with \eqref{eq:forLP} and Markov's inequality to obtain
\begin{equation*}
\begin{split}
  \mathbb{E}(d_{\mathrm{LP}}&((h_*\mu)^{\mathcal{D}_k(y)},(\nu\cdot\mu)^{\mathcal{D}_k(y)}))\\
  &=\mathbb{E}(d_{\mathrm{LP}}((h_*\mu)^{\mathcal{D}_k(y)},(\nu\cdot\mu)^{\mathcal{D}_k(y)})\mathds{1}\{\dist(p^h_{k,\ell}(x),q_{k,\ell}(x))>2^{-\ell}\})\\
  &\qquad\qquad+\mathbb{E}(d_{\mathrm{LP}}((h_*\mu)^{\mathcal{D}_k(y)},(\nu\cdot\mu)^{\mathcal{D}_k(y)})\mathds{1}\{\dist(p^h_{k,\ell}(x),q_{k,\ell}(x))\leq2^{-\ell}\})\\
  &\leq 2^{\ell}\mathbb{E}(\dist(p^h_{k,\ell}(x),q_{k,\ell}(x)))+2^{-\ell}\leq 2^{\ell}\ell2o_\delta'(1)+2^{-\ell}\\
  &\leq -\sqrt{2o_{\delta}'(1)}\log_2\sqrt{2o_{\delta}'(1)}+\sqrt{2o_{\delta}'(1)}.
\end{split}
\end{equation*}
In other words, denoting the above upper bound by $o_\delta''(1)$, we have obtained
\begin{equation}\label{eq:prop-main-1:1}
   \iint \frac{1}{n}\sum_{k=1}^n d_{\rm LP}((\nu\cdot\mu)^{\mathcal{D}_k(y)}, (h_*\mu)^{\mathcal{D}_k(y)}) \dd (h_*\mu)(y)\dd\nu(h) < o_\delta''(1),
\end{equation}
where $o_\delta''(1)\to 0$ as $\delta\to 0$. To conclude \eqref{eq:prop-main-1:2}, it therefore remains to transition from dyadic magnifications to magnifications along balls. 

To finish the proof, let us show how \eqref{eq:prop-main-1:1} implies \eqref{eq:prop-main-1:2}. Fix $\delta>0$ and $m\in\N$. By randomly translating $\nu\cdot \mu$ with respect to the Lebesgue measure, we may assume that $\nu\cdot \mu$-almost every $y$ equidistributes for the Lebesgue measure under the map 
$$
  F \colon \R^d \to \R^d, \quad  F(x_1,\ldots, x_d) = (2x_1 \mod 1, \ldots, 2x_d\mod 1).
$$ 
In particular, for $\nu\cdot\mu$-almost every $y$, there exists a set $\mathcal{N}_\delta\subseteq \N$ such that
\begin{equation*}
  \liminf_{n\to\infty}\frac{1}{n}\#(\mathcal{N}_\delta\cap [0,n]) \geq 1-\delta
\end{equation*}
and $B(F^{k} y, 2^{-m+1}) \subseteq [0,1]^d$ or, equivalently,
\begin{equation}\label{cor-1-1-eq2}
    B(y,2^{-k-m+1})\subseteq \mathcal{D}_k(y)
\end{equation}
for all $k\in \mathcal{N}_\delta$. For any Borel measure $\eta$ and $\eta$-almost every $y$, it follows from Proposition 
\ref{prop-Hochman1.19} that
\begin{equation*}
    \limsup_{n\to\infty} \frac{1}{n}\sum_{k=1}^n \frac{\log \eta_{y,k}(B(0,2^{-m}))}{m} \leq \overline{\dim}_{\rm loc}(\eta, y).
\end{equation*}
Applying this inequality, we see that for $\nu$-almost every $h$ and for $h_*\mu$-almost every $y$ there exists a number $c>0$ and a set $\mathcal{N}_\delta'\subseteq \N$ such that $\liminf_{n\to\infty}\frac{1}{n}\#(\mathcal{N}_\delta'\cap [0,n]) \geq 1-\delta$ and
\begin{equation*}
    c\leq \frac{\eta(B(y,2^{-k-m}))}{\eta(B(y,2^{-k}))}\leq 1
\end{equation*}
for all $k\in \mathcal{N}_\delta'$ and for both $\eta \in \lbrace \nu\cdot \mu, h_*\mu\rbrace$. In particular, for each $k\in \mathcal{N}_\delta \cap \mathcal{N}_\delta'$, we have
\begin{equation}\label{cor-1-1-eq1}
  c \leq \frac{\eta(B(y,2^{-k-m}))}{\eta(\mathcal{D}_k(y))}\leq 1
\end{equation}
for both $\eta \in \lbrace \nu\cdot \mu, h_*\mu\rbrace$.

Let $\varepsilon'>0$. By the assumption \eqref{it:mainprop3} and Lemma~\ref{lem:tech1}, there exists $\rho>0$ such that for $\nu\cdot\mu$-almost every $y$, there exists a set $\mathcal{N}_\delta''\subseteq\N$ such that $\liminf_{n\to\infty} \frac{1}{n} \#(\mathcal{N}_\delta'' \cap [0,n]) \geq 1-\delta$ and 
\begin{equation}\label{cor-1-1-eq3}
  \sup_{\rho \le r \le 1} (\nu\cdot\mu)_{y,k}(B(0, r+\rho) \setminus B(0,r-\rho)) < \varepsilon'
\end{equation}
for all $k\in \mathcal{N}_\delta''$. Replacing $\rho$ by a smaller number, if necessary, and applying Lemma~\ref{lem:tech2}, we conclude that for every $y$ and $k$ satisfying \eqref{cor-1-1-eq2}, \eqref{cor-1-1-eq1}, \eqref{cor-1-1-eq3}, and
\begin{equation*}
  d_{\rm LP}((\nu\cdot\mu)^{\mathcal{D}_k(y)},  (h_*\mu)^{\mathcal{D}_k(y)}) < \rho,
\end{equation*}
we have
\begin{equation*}
  \int_0^1 d_{\rm LP} ((\nu\cdot\mu)_{y,k+m-1+t},  (h_*\mu)_{y,k+m-1+t})\dd t < O(\varepsilon'/c)
\end{equation*}
for every $0\leq t \leq 1$. Choosing $\varepsilon'$ small enough, the quantity $O(\varepsilon'/c)$ can be taken arbitrarily small independently of $\varepsilon$. In particular, if we choose $\delta$ small enough, then we have the implication 
\begin{align*}
    \frac{1}{n}\sum_{k=1}^n d_{\rm LP}((\nu\cdot\mu)^{\mathcal{D}_k(y)},  (h_*\mu)^{\mathcal{D}_k(y)}) < \rho^2 
    \quad\Rightarrow\quad \frac{1}{n}\int_m^{n+m}d_{\rm LP} ((\nu\cdot\mu)_{y,t},  (h_*\mu)_{y,t})\dd t < \varepsilon
\end{align*}
for all large enough $n$. Since this implication holds for $\nu$-almost every $h$ and $h_*\mu$-almost every $y$, the claim \eqref{eq:prop-main-1:2} follows from the already established \eqref{eq:prop-main-1:1}.
\end{proof}

We will use Proposition \ref{prop-main-1} to show that a quasi-Bernoulli measure $\mu$ shares the same tangential behaviour with its tangent measures, that is, its weak$^*$-accumulation points of the scenery $(\mu_{x,t})_{t\geq0}$. To that end, we will show in the following lemma that the tangents of a quasi-Bernoulli measure $\mu$ are equivalent with convex combinations of conformal images of $\mu$ itself.

\begin{lemma} \label{lemma-structureoftangents}
  Let $\Phi$ be a conformal iterated function system on $\R^d$ with attractor $K$ and $\mu$ be a quasi-Bernoulli measure on $K$. For $\mu$-almost every $x$ and every tangent distribution $P$ at $x$, for $P$-almost every $\eta$ there exists a measure $\nu \in \mathcal{P}(\mathcal{C}^{1+\alpha}(\R^d))$ such that $\eta \sim_C (\nu \cdot \mu)_{B(0,1)} $, where $C>0$ is as in \eqref{eq:quasi-bernoulli-def}.
\end{lemma}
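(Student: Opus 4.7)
The plan is to express each magnification $\mu_{x,t}$ as a convex combination of conformal pushforwards of $\mu$ up to a $\sim_C$ equivalence, and then pass to a weak$^*$-limit to identify $P$-typical $\eta$. Fix a typical $x = \Pi(\iii) \in \spt\mu$ and $t > 0$, and introduce the stopping set
$$\Xi_t = \{\jjj \in \Lambda^* : \diam(f_\jjj(K)) \le e^{-t} < \diam(f_{\jjj^-}(K))\},$$
so that the cylinders $\{[\jjj]\}_{\jjj\in\Xi_t}$ partition $\Lambda^\N$ into pieces with $f_\jjj(K)$ of diameter $\asymp e^{-t}$. The identity $\Pi(\jjj\kkk) = f_\jjj(\Pi(\kkk))$ combined with the quasi-Bernoulli estimate \eqref{eq:quasi-bernoulli-def} (which gives $\bar\mu([\jjj\kkk]) \in [C^{-1},C]\cdot\bar\mu([\jjj])\bar\mu([\kkk])$ uniformly, hence a pointwise Radon--Nikodym bound on all Borel sets of $\Lambda^\N$) yields $\Pi_*(\bar\mu|_{[\jjj]}) \sim_C \bar\mu([\jjj])\,(f_\jjj)_*\mu$ for every $\jjj$. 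Summation of equivalent measures dominated by a common reference preserves $\sim_C$, so
$$\mu \;\sim_C\; \sum_{\jjj\in\Xi_t} \bar\mu([\jjj])\,(f_\jjj)_*\mu.$$
With $g_{x,t}(y) = e^t(y-x)$, restricting this identity to $B(x,e^{-t})$, pushing forward through $g_{x,t}$, and normalising gives $\mu_{x,t} \sim_C (\nu_{x,t}\cdot\mu)_{B(0,1)}$, where
$$\nu_{x,t} = Z_{x,t}^{-1}\sum_{\jjj \in \Xi_t^x}\bar\mu([\jjj])\,\delta_{g_{x,t}\circ f_\jjj} \in \PP(\mathcal{C}^{1+\alpha}(\R^d))$$
is the finitely supported probability measure on $\Xi_t^x = \{\jjj \in \Xi_t : f_\jjj(K) \cap B(x,e^{-t}) \ne \emptyset\}$, with normalising constant $Z_{x,t}$ (replacing $e^t$ by $e^{t-c}$ for a fixed $c > 0$ ensures the composed maps are strict contractions in the sense of $\mathcal{C}^{1+\alpha}(\R^d)$).

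For a tangent distribution $P$, Theorem \ref{thm-Hochman1.7} ensures that for $\mu$-a.e. $x$, $P$ is a fractal distribution and $P$-a.e. $\eta$ is a weak$^*$-limit of $\mu_{x,t_k}$ along some $t_k \to \infty$. Koebe-type bounded distortion for conformal contractions implies that the family $\{g_{x,t}\circ f_\jjj : t > 0,\ \jjj \in \Xi_t^x\}$ has derivatives of modulus $\asymp 1$ and uniformly bounded $\alpha$-Hölder norms, so it is $\mathcal{C}^1$-precompact and $\{\nu_{x,t_k}\}_k$ is weak$^*$-precompact in $\PP(\mathcal{C}^{1+\alpha}(\R^d))$. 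After extracting $\nu_{x,t_k} \to \nu$, continuity of the pushforward $h \mapsto h_*\mu$ gives $\nu_{x,t_k}\cdot\mu \to \nu\cdot\mu$ weakly; the $\sim_C$-inequality is closed under weak$^*$-limits (via continuity sets of the limit measures), giving $\eta \sim_C (\nu\cdot\mu)_{B(0,1)}$.

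The main obstacle is tracking the multiplicative constant carefully through the operations of summation, restriction, rescaling, normalisation, and weak$^*$-limit. Each step preserves $\sim_C$ with constants depending only on $C$ when the measures involved are compared against a common dominating measure, but the normalisation is the most delicate step since it requires comparing total masses on the two sides of the equivalence. The partial-cylinder issue at the boundary of $B(x,e^{-t})$ does not cause any further degradation because the starting identity is an equivalence of full measures rather than a set-theoretic decomposition.
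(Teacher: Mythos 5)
Your opening steps are essentially the paper's: you build the stopping set (the paper calls it $\Lambda(x,t)$, you call it $\Xi_t^x$), apply the quasi-Bernoulli property \eqref{eq:quasi-bernoulli-def} to obtain $\mu_{x,t} \sim_C (\nu_{x,t}\cdot\mu)_{B(0,1)}$, and invoke bounded distortion plus Arzel\`a--Ascoli for weak$^*$-precompactness of the $\nu_{x,t}$. However, there is a genuine gap in the final limiting step, and it is precisely the part that carries the technical weight of the paper's proof.

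You assert that after extracting $\nu_{x,t_k}\to\nu$, continuity of $h\mapsto h_*\mu$ gives $\nu_{x,t_k}\cdot\mu\to\nu\cdot\mu$ and then ``the $\sim_C$-inequality is closed under weak$^*$-limits.'' The problem is the normalisation: $(\nu_{x,t_k}\cdot\mu)_{B(0,1)}$ is $\nu_{x,t_k}\cdot\mu$ restricted to $B(0,1)$ and divided by $\nu_{x,t_k}\cdot\mu(B(0,1))$. Nothing in your argument rules out $\nu_{x,t_k}\cdot\mu(B(0,1))\to 0$, in which case $(\nu\cdot\mu)_{B(0,1)}$ is undefined and $(\nu_{x,t_k}\cdot\mu)_{B(0,1)}$ need not converge to anything controlled by $\nu\cdot\mu$. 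This is not a technicality to be tracked ``carefully through the operations'': a priori the overwhelming majority of the overlapping pieces $f_\jjj$ contributing to $\nu_{x,t}$ could have most of their image mass outside $B(0,1)$, driving the $B(0,1)$-mass to zero. The paper resolves this by observing $\nu(x,t)\cdot\mu(B(0,1)) = \mu(B(x,2^{-t}))/C(x,t) \ge \mu(B(x,2^{-t}))/\mu(B(x,\rho^{-1}2^{-t}))$ and then invoking Proposition~\ref{prop-Hochman1.19} to control
\[
\limsup_{n\to\infty}\frac{1}{n}\int_0^n -\log\frac{\mu(B(x,2^{-t}))}{\mu(B(x,\rho^{-1}2^{-t}))}\dd t \le \dim\mu,
\]
which yields, for $\mu$-a.e.\ $x$ and any $\delta>0$, an $r>0$ and a set of times of lower density at least $1-\delta$ on which the mass is at least $r$. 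The paper then defines the set $E_r$ of measures realisable with mass lower bound $r$, proves $E_r$ is closed (which makes the limsup-of-closed-set bound for tangent distributions applicable), and concludes $P(E_r)\ge 1-\delta$. You would need an argument of this type --- or some other way of producing a uniform lower bound on $\nu_{x,t_k}\cdot\mu(B(0,1))$ along your subsequence --- before the limiting step is valid.
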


\begin{proof}
Write $\Phi = (f_i)_{i \in \Lambda}$ and let $\bar{\mu}$ be a quasi-Bernoulli measure on $\Lambda^\N$ such that $\Pi_*\bar{\mu} = \mu$. Define
\begin{align*}
  \mathcal{C}^{1+\alpha}_{\rho, M}(B(0,1)) = \{ f\in \mathcal{C}^{1+\alpha}(\R^d) : \;&M^{-1}\Vert Df|_y\Vert \leq \Vert D f|_x\Vert \leq M\Vert Df|_y\Vert \\ &\text{and } \Vert Df|_x\Vert \geq \rho \text{ for all } x,y\in {\rm Dom}(f), \\
  &\text{and } f(B(0,1))\cap B(0,1)\neq\emptyset \}
\end{align*}
for all $\rho>0$ and $M \ge 1$. For the rest of the proof, fix $\rho>0$ and $M \ge 1$ such that $\mathcal{C}^{1+\alpha}_{\rho,M}(B(0,1))$ contains the functions $f_i$ of the iterated function system $\Phi$ and furthermore, relying on the bounded distortion property, $\mathcal{C}^{1+\alpha}_{0,M}(B(0,1))$ contains all the compositions of $f_i$. We remark that, by the Arzel{\'a}-Ascoli theorem, the set $\mathcal{C}^{1+\alpha}_{\rho,M}(B(0,1))$ is compact in the topology induced by the norm $\Vert f \Vert_{\mathcal{C}^{1}} = \Vert f\Vert_\infty + \Vert Df \Vert_{\infty}$.

Define
\begin{align*}
\Lambda(x,t) = \lbrace \mathtt{i}\in\Lambda^* : \;&\diam(f_\mathtt{i}(B(0,1)))\leq 2^{-t} < \diam(f_{\mathtt{i^-}}(B(0,1))) \\ 
&\,\text{and } f_\mathtt{i}(B(0,1))\cap B(x,2^{-t})\neq\emptyset \rbrace
\end{align*}
for all $x \in \spt\mu$ and $t>0$. It follows from the quasi-Bernoulli property \eqref{eq:quasi-bernoulli-def} that 
\begin{equation}\label{eq:lemma3.3-quasibernoulli}
    \mu|_{B(x,2^{-t})} \sim_C \sum_{{\mathtt{i}} \in \Lambda(x,t)} \bar{\mu}([\mathtt{i}]) (f_{\mathtt{i}})_* \mu|_{B(x,2^{-t})},
\end{equation}
where $C>0$ is as in \eqref{eq:quasi-bernoulli-def}.
Let $g_{x,t}$ be the affine map $y\mapsto 2^t (y-x)$ and write 
\begin{equation*}
\nu(x,t) = C(x,t)^{-1} \sum_{{\mathtt{i}} \in \Lambda(x,t)} \bar{\mu}([\mathtt{i}]) \delta_{g_{x,t}\circ f_{\mathtt{i}}} \in \mathcal{P}(\mathcal{C}^{1+\alpha}_{\rho,M}(\R^d)),
\end{equation*}
where $C(x,t) = \sum_{\mathtt{i}\in\Lambda(x,t)} \bar{\mu}([\mathtt{i}])$. Notice that, by \eqref{eq:lemma3.3-quasibernoulli}, we have $\mu_{x,t}\sim_C (\nu(x,t)\cdot\mu)_{B(0,1)}$. Define
\begin{align*}
    E &= \lbrace \eta\in\mathcal{P}(B(0,1)) : \eta\sim_C (\nu\cdot\mu)_{B(0,1)} \text{ for some } \nu\in \mathcal{P}(\mathcal{C}^{1+\alpha}_{\rho,M}(B(0,1)))\rbrace, \\
    E_r &= \lbrace \eta\in\mathcal{P}(B(0,1)) : \nu\cdot\mu(B(0,1))\geq r \text{ and} \\ &\qquad\qquad\qquad\qquad\quad \eta\sim_C (\nu\cdot\mu)_{B(0,1)} \text{ for some } \nu\in \mathcal{P}(\mathcal{C}^{1+\alpha}_{\rho,M}(B(0,1)))\rbrace
\end{align*}
for all $r>0$ and note that $\mu_{x,t}\in E$ for all $x \in \spt\mu$ and $t\geq 0$. Recall that our aim is to show that for $\mu$-almost every $x$ and any tangent distribution $P$ at $x$, we have $P(E)=1$. Notice that $P(E) = P(\bigcup_{r>0} E_r)$ since $P$-almost every measure is supported on $B(0,1)$.

We claim that the set $E_r$ is closed for all $r>0$. Let $(\eta_n)_n$ be a converging sequence in $E_r$, with limit $\eta$. Using the compactness of $\mathcal{P}(\mathcal{C}^{1+\alpha}_{\rho, M}(B(0,1)))$ and by possibly passing to a subsequence, we find a converging sequence $(\nu_n)_n$ in $\mathcal{P}(\mathcal{C}^{1+\alpha}_{\rho,M}(B(0,1)))$ such that $\eta_n \sim_C (\nu_n\cdot\mu)_{B(0,1)}$ and $\nu_n\cdot\mu(B(0,1))\geq r$; let $\nu$ be the limit of this sequence. It follows from Lemma \ref{lem:tech2} and the continuity of the map $\nu\mapsto\nu\cdot\mu$ that $(\nu_n\cdot\mu)_{B(0,1)} \to (\nu\cdot\mu)_{B(0,1)}$. It is readily verified that $\nu\cdot\mu(B(0,1))\geq r$ and that $\eta \sim_C (\nu\cdot\mu)_{B(0,1)}$, whence $\eta\in E_r$. Thus $E_r$ is closed.

By \eqref{eq:lemma3.3-quasibernoulli}, we have 
\begin{equation}\label{eq:lemma3.3-zetaB}
    \nu(x,t)\cdot\mu(B(0,1)) = \frac{\mu(B(x,2^{-t}))}{C(x,t)}. 
\end{equation}
On the other hand, by the definition of $\Lambda(x,t)$ and the choice of $\rho$, we have
\begin{equation*}
    C(x,t)\leq \mu(B(x,\rho^{-1} 2^{-t}))
\end{equation*}
and, by Proposition \ref{prop-Hochman1.19},
\begin{equation*}
    \limsup_{n\to\infty} \frac{1}{n}\int_0^n -\log \frac{\mu(B(x,2^{-t}))}{\mu(B(x,\rho^{-1} 2^{-t}))}\dd t \leq \dim \mu
\end{equation*}
for $\mu$-almost every $x$. In particular, for $\mu$-almost every $x$ and any $\delta>0$, there exists $r>0$ and an open set $I_\delta\subseteq \R$ such that $\liminf_{n\to\infty} \frac{1}{n}\LL^1(I_\delta \cap [0,n]) \geq 1-\delta$, where $\LL^1$ is the Lebesgue measure on $\R$, and
\begin{equation*}
  \frac{\mu(B(x,2^{-t}))}{C(x,t)}\geq \frac{\mu(B(x,2^{-t}))}{\mu(B(x,\rho^{-1} 2^{-t}))} \geq r
\end{equation*}
for all $t\in I_\delta$. Now, for $\mu$-almost every $x$ and any tangent distribution $P$ at $x$, we have
\begin{align*}
    P(E) &\geq P(E_r) \geq \limsup_{n\to\infty}\frac{1}{n}\int_0^n \delta_{\mu_{x,t}} (E_r)\dd t \\
    &\geq \liminf_{n\to\infty}\frac{1}{n}\int_{I_\delta\cap[0,n]} \delta_{\mu_{x,t}} (E_r)\dd t 
    \geq 1-\delta
\end{align*}
as $E_r$ is closed and $\mu_{x,t}\in E_r$ for each $t\in I_\delta$. By letting $\delta \to 0$, this concludes the proof.
\end{proof}

If the conformal iterated function system satisfies the open set condition (or more generally, the weak separation condition), then the measure $\nu$ in Lemma \ref{lemma-structureoftangents} is a finite sum of Dirac masses. In general, this need not be the case and $\nu$ may even be non-atomic.

As indicated by Theorems \ref{thm-Hochman1.6} and \ref{thm-Hochman1.7}, tangent measures of $\mu$ possess much more regular local statistics than $\mu$ itself. The advantage of the representation given by Lemma \ref{lemma-structureoftangents} is that using Proposition \ref{prop-main-1}, we are able to transfer this regularity back to $\mu$. Combining Theorems \ref{thm-Hochman1.6} and \ref{thm-Hochman1.7}, Lemma \ref{lemma-structureoftangents}, and the Lebesgue-Besicovitch differentiation theorem, we deduce that for some $\eta \in \mathcal{P}(\mathcal{C}^{1+\alpha}(\R^d))$, the measure $\eta \cdot \mu$ is uniformly scaling and generates an ergodic fractal distribution. In order to apply Proposition \ref{prop-main-1} to deduce that also $\mu$ is uniformly scaling, we have to know that convolving $\mu$ with $\eta$ does not increase its entropy. However, this is ensured by Proposition \ref{prop-Hochman1.19} together with exact-dimensionality of $\mu$.

We are ready to prove Theorems \ref{thm-unif-scaling-1} and \ref{thm-unif-scaling-2}, beginning with the latter and afterwards restricting to the more special setting of the former. The proof of Theorem \ref{thm-unif-scaling-3} is contained in the proof of Theorem \ref{thm-unif-scaling-2}.

\begin{proof}[Proof of Theorems \ref{thm-unif-scaling-2} and \ref{thm-unif-scaling-3}]
We assume that $\dim \mu>0$; otherwise $\mu$ is uniformly scaling and generates the distribution $\delta_{\delta_0}$ by \cite[Corollary 6.6]{Hochmanpreprint}. Let $P_1$ and $P_2$ be ergodic components of tangent distributions of $\mu$, both of which satisfy the conclusion of Lemma \ref{lemma-structureoftangents}. Applying Theorem \ref{thm-Hochman1.6} and Lemma \ref{lemma-structureoftangents} to $P_1$ and $P_2$, we find measures $\nu_1, \nu_2\in\mathcal{P}(\mathcal{C}^{1+\alpha}(\R^d))$ such that $(\nu_i\cdot\mu)_{B(0,1)}$ is uniformly scaling and generates $P_i$ for both $i \in \{1,2\}$. In particular, by the dominated convergence theorem,
\begin{equation}\label{eq-P_i}
    \lim_{n\to\infty} \iint_{h^{-1}(B(0,1))} d_{\rm LP}\biggl(P_i, \frac{1}{n} \int_0^n \delta_{(\nu_i\cdot\mu)_{h(x), t}}\dd t\biggr) \dd\mu(x)\dd\nu_i(h) = 0.
\end{equation}
By Proposition \ref{prop-Hochman1.19}, the measures $\nu_1$ and $\nu_2$ can be chosen such that $(\nu_i \cdot \mu)_{B(0,1)}$ are exact-dimensional and $\lim_{n\to\infty} \frac{1}{n} H((\nu_i\cdot\mu)_{B(0,1)},\mathcal{D}_n) = \dim\mu > 0$ for both $i \in \{1,2\}$. Moreover, by e.g. \cite[Lemma 6.7]{Hochmanpreprint}, we have $\liminf_{n\to\infty}\frac{1}{n}\sum_{k=1}^n H((h_*\mu)^{\mathcal{D}_k(x)},\mathcal{D}_1) = \dim \mu$ for $\nu_i$-almost all $h$ and $h_*\mu$-almost all $x$, for both $i\in \lbrace 1,2\rbrace$. In particular, the measures $\nu_i$ and $\mu$ satisfy the conditions of Proposition \ref{prop-main-1}, and, by relying on that proposition, we see that for any $\varepsilon > 0$ and for all large enough $n$, we have 
\begin{equation}\label{eq-mu}
    \iint d_{\rm LP}\biggl( \frac{1}{n}\int_0^n \delta_{(\nu_i\cdot\mu)_{h(x),t}}\dd t, \frac{1}{n}\int_0^n \delta_{(h_*\mu)_{h(x),t}}\dd t\biggr)\dd\mu(x)\dd\nu_i(h) <\varepsilon.
\end{equation}
Combining \eqref{eq-P_i} and \eqref{eq-mu} and applying the triangle inequality, we obtain
\begin{equation}\label{eq-averageusc}
    \lim_{n\to\infty} \iint_{h^{-1}(B(0,1))} d_{\rm LP}\biggl(P_i, \frac{1}{n}\int_0^n \delta_{(h_*\mu)_{h(x),t}}\dd t\biggr) \dd\mu(x)\dd\nu_i(h) = 0
\end{equation}
for both $i \in \{1,2\}$. Choosing here a subsequence so that the integrand tends to $0$ pointwise proves Theorem \ref{thm-unif-scaling-3}.

Continuing with the proof of Theorem \ref{thm-unif-scaling-2}, we may combine \eqref{eq-averageusc} with Proposition \ref{prop-Hochman1.9} and the dominated convergence theorem to we obtain
\begin{equation}
    \lim_{n\to\infty} \iint_{h^{-1}(B(0,1))} d_{\rm LP}\biggl((D h|_x)^{-1}_*P_i, \frac{1}{n}\int_0^n \delta_{\mu_{x,t}}\dd t\biggr) \dd\mu(x)\dd\nu_i(h) = 0
\end{equation}
for both $i\in\lbrace 1,2\rbrace$. It can be seen from the proof of Lemma \ref{lemma-structureoftangents} that both $\nu_1$ and $\nu_2$ give positive measure to functions $h$ with $h(B(0,1))\subseteq B(0,1)$. In particular, for $\nu_i$-almost every $h$ for which $h(B(0,1))\subseteq B(0,1)$ there is a subsequence $(n_k)_{k\in\N}$ such that
\begin{equation*}
    \lim_{k\to\infty} d_{\rm LP}\biggl( (Dh|_x)^{-1}_*P_1, \frac{1}{{n_k}} \int_0^{n_k} \delta_{\mu_{x,t}}\dd t\biggr) = 0
\end{equation*}
for $\mu$-almost all $x$, for both $i\in\lbrace 1,2\rbrace$. Finally, an application of the triangle inequality yields that, for $\nu_1\times\nu_2$-almost every $(h,g)$ for which $h(B(0,1))\subseteq B(0,1)$ and $g(B(0,1))\subseteq B(0,1)$, we have
\begin{align*}
    &d_{\rm LP}((Dh|_x)^{-1}_*P_1, (Dg|_x)^{-1}_*P_2) \\
    &\qquad\qquad \leq\lim_{k\to\infty} d_{\rm LP}\biggl( (Dh|_x)^{-1}_*P_1, \frac{1}{{n_k}} \int_0^{n_k} \delta_{\mu_{x,t}}\dd t\biggr) \\
    &\qquad\qquad\qquad\qquad+ \lim_{k\to\infty}  d_{\rm LP}\biggl( (Dg|_x)^{-1}_*P_2, \frac{1}{{n_k}} \int_0^{n_k} \delta_{\mu_{x,t}}\dd t\biggr) 
    = 0
\end{align*}
or, equivalently, 
\begin{equation}\label{eq-P_1P_2}
    P_1 = (Dh|_x (Dg|_x)^{-1})_* P_2.
\end{equation}
Let now $Q$ be a tangent distribution of $\mu$ which is also a fractal distribution, and let $Q = \int P(\omega)\,dQ'(\omega)$ be its ergodic decomposition. By \eqref{eq-P_1P_2}, for $Q'$-almost every $\omega$ there exists an orthogonal matrix $O(\omega)$ such that $P(\omega) = O(\omega)_* P_2$. This induces a $Q'$-measurable function $\omega \mapsto O(\omega)$, and if we let $\zeta$ be the push-forward of $Q'$ under this map, we have obtained $Q = \int O_* P_2\dd \zeta(O)$ which is what we set out to prove.
\end{proof}

The proof of Theorem \ref{thm-unif-scaling-1} is built upon the above arguments.

\begin{proof}[Proof of Theorem \ref{thm-unif-scaling-1}]
Let us first assume that $d=1$. Since in this case $h$ and $g$ are (orientation-preserving) real functions, we have $Dh|_x = S_{-\log h'(x)}$ and $(Dg|_x)^{-1} = S_{\log g'(x)}$. Thus, by \eqref{eq-P_1P_2} and the $(S_t)_{t \ge 0}$-invariance of tangent distributions, $P_1 = P_2$ and $\mu$ is uniformly scaling. 

Suppose then that $d\geq 2$ and $\Phi$ consists of similarities. Then it is clear from the proof of Lemma \ref{lemma-structureoftangents} that the measures $\nu_1$ and $\nu_2$ in the proof of Theorem \ref{thm-unif-scaling-2} are supported on functions of the form $h(x) = r O_h(x) + a$ for some contraction $r>0$, a translation vector $a\in \R^d$, and an orthogonal matrix $O_h\in\mathcal{O}$, where $\mathcal{O}$ is the topological group generated by the orthogonal parts of the similarities in $\Phi$. In particular, if $P_1$ and $P_2$ are ergodic components of tangent distributions of $\mu$ outside a set of zero $\mu$-measure, then 
\begin{equation}\label{eq-P_1P_2selfsimilar}
    P_1 = (O_h O_g^{-1})_* P_2
\end{equation}
by \eqref{eq-P_1P_2}. 

To conclude that $\mu$ is uniformly scaling we reason as follows: Let $\varepsilon>0$ and let $\Lambda'\subseteq\Lambda^*$ be such that $\lbrace [\mathtt{i}]: \mathtt{i}\in\Lambda'\rbrace$ is a partition of $\Lambda^\N$, and the orthogonal parts of $f_\mathtt{i}$, $\mathtt{i}\in\Lambda'$, are $\varepsilon$-dense in $\mathcal{O}$. Using the quasi-Bernoulli property of $\mu$ to write $\nu_i\cdot \mu \sim_C \nu_i \cdot (\sum_{\mathtt{i}\in\Lambda'} \bar{\mu}([\mathtt{i}]) \delta_{f_\mathtt{i}}\cdot \mu)$ and ``absorbing'' the measure $\sum_{\mathtt{i}\in\Lambda'} \bar{\mu}([\mathtt{i}]) \delta_{f_\mathtt{i}}$ into $\nu_i$, we may suppose that $(\nu_i\cdot\mu)_{B(0,1)}$ still generates $P_i$ and that the set $\lbrace O_h: h\in\spt\nu_i, h(B(0,1))\subseteq B(0,1)\rbrace$ is $\varepsilon$-dense in $\mathcal{O}$. Thus, in \eqref{eq-P_1P_2selfsimilar}, we can choose $h$ and $g$ so that $\Vert O_h O_g^{-1} - {\rm Id}\Vert < \varepsilon$ and, consequently, $d_{\rm LP}(P_1, P_2) < 2\varepsilon$. Letting $\varepsilon\to 0$ completes the proof.
\end{proof}

\section{Normal numbers in self-conformal sets}\label{section-normalnumbers}

In this section, we prove Theorem \ref{pointwise_normality}. We begin by recalling some of the definitions and results of Hochman and Shmerkin \cite{HochmanShmerkin2015} which are used to study spectral properties of generated distributions. As in the proof of \cite[Theorem 1.4]{HochmanShmerkin2015}, Theorem \ref{pointwise_normality} is proven through an application of \cite[Theorem 1.1]{HochmanShmerkin2015} combined with an analysis of the pure-point spectrum of the generated distribution. Let $P$ be a distribution invariant under $(S_t)_{t \ge 0}$ and write $e(x) = e^{2 \pi i x} \in \C$ for all $x \in \R$. We call a number $\alpha \geq 0$ an \emph{eigenvalue} of $P$ if there exists a non-trivial measurable function $\varphi: \mathcal{P}(\R^d) \to \C$ such that 
$$
  \varphi(S_t\mu) = e(t\alpha)\varphi(\mu)
$$
for every $t \geq 0$ and for $P$-almost all $\mu$. Such a function $\varphi$ is called an \emph{eigenfunction} for the eigenvalue $\alpha$. The collection of all eigenvalues of $P$ is called its \emph{pure-point spectrum}. Given a Radon measure $\mu$ and $t_0 > 0$, we say that $\mu$ $t_0$-\emph{generates} a distribution $Q$ at $x$ if 
$$
  Q = \lim_{n \to \infty} \frac{1}{n} \sum_{k=1}^n \delta_{\mu_{x,kt_0}}.
$$
We recall the following property of fractal distributions from \cite[Lemma 4.9 and Proposition 4.15]{HochmanShmerkin2015}. 

\begin{lemma} \label{lemma-tangentt0}
    If $P$ is a fractal distribution, then for any $t_0>0$, $P$-almost every $\eta$ $t_0$-generates an $S_{t_0}$-ergodic distribution $P_x$ at $\eta$-almost every $x$. Furthermore, if $\varphi$ is an eigenfunction of $P$ for some eigenvalue $k/t_0$, then there exists $c \in \C$ such that $\varphi(\zeta) = c$ for $\eta$-almost every $x$ and $P_x$-almost all $\zeta$.
\end{lemma}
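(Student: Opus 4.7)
The plan is to prove the two assertions separately; both rely on combining Birkhoff's ergodic theorem with the quasi-Palm property built into the definition of a fractal distribution.

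For the first assertion, I would apply Birkhoff's ergodic theorem to the measure-preserving system $(\mathcal{P}(B(0,1)), P, S_{t_0})$ against a countable dense family of continuous test functions. This yields, for $P$-almost every $\zeta$, weak convergence of the empirical averages $\frac{1}{n}\sum_{k=0}^{n-1}\delta_{S_{kt_0}\zeta}$ to the $S_{t_0}$-ergodic component $P_\zeta$ of $P$ containing $\zeta$. Since $\eta_{x,kt_0} = S_{kt_0}\eta_{x,0}$, the scenery of $\eta$ at $x$ along arithmetic progressions of step $t_0$ is precisely the $S_{t_0}$-orbit of the recentered measure $\eta_{x,0}$. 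The quasi-Palm property of fractal distributions ensures that, for $P$-almost every $\eta$ and $\eta$-almost every $x$, the measure $\eta_{x,0}$ lies in any prescribed $P$-conull set, and in particular in the Birkhoff-convergence set above. Setting $P_x := P_{\eta_{x,0}}$ then yields an $S_{t_0}$-ergodic distribution which $\eta$ $t_0$-generates at $x$.

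For the second assertion, at $t = t_0$ the eigenvalue identity reads $\varphi(S_{t_0}\mu) = e(k)\varphi(\mu) = \varphi(\mu)$, so $\varphi$ is $S_{t_0}$-invariant on a $P$-conull set $G$. A further application of the quasi-Palm property propagates $G$ to the $S_{t_0}$-orbit of $\eta_{x,0}$, so that for $\eta$-almost every $x$ the function $\varphi$ is $S_{t_0}$-invariant on a set of full $P_x$-measure. The $S_{t_0}$-ergodicity of $P_x$ from the first part then forces $\varphi$ to be $P_x$-almost surely equal to a single constant $c(x) \in \C$. Integrating against $P_x$ and using the $t_0$-generation,
\[
c(x) = \int \varphi \, \dd P_x = \lim_{n\to\infty} \frac{1}{n}\sum_{k=0}^{n-1}\varphi(\eta_{x,kt_0}) = \varphi(\eta_{x,0}),
\]
where the last equality uses the $S_{t_0}$-invariance of $\varphi$ at $\eta_{x,0}$.

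The remaining and most delicate step is to show that $x \mapsto \varphi(\eta_{x,0})$ is $\eta$-almost surely constant. My plan is to pass to the Palm-type space $\{(\eta,x): x \in \spt\eta\}$ equipped with the measure $\widetilde{P} = \int \delta_\eta \otimes \eta \, \dd P(\eta)$, and to introduce a lifted scenery flow $\widetilde{S}_t$ that magnifies at the current base point while selecting a new base point inside the magnified unit ball according to the appropriate conditional law. The fractal distribution property makes $\widetilde{S}_t$ preserve $\widetilde{P}$ up to the density accounting for the change in ball mass, and the function $\Phi(\eta,x) := \varphi(\eta_{x,0})$ inherits invariance under $\widetilde{S}_{t_0}$ from the eigenvalue equation combined with the computation above. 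A fibered ergodicity statement for $\widetilde{S}_{t_0}$, essentially a repackaging of the $S_{t_0}$-ergodicity of $P_x$ established in the first part, then forces $\Phi$ to be constant on $\widetilde{P}$-almost every $\eta$-fiber, yielding the required constant $c = c(\eta)$. Carrying out this Palm-flow construction cleanly is the main technical obstacle.
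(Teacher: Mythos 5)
Your overall decomposition — Birkhoff for the $S_{t_0}$-system combined with the quasi-Palm property, then reduction of the eigenfunction claim to constancy of $c(\eta,x)=\varphi(\eta_{x,0})$ — is the right skeleton, and the first assertion is essentially handled (modulo a small technical point: the quasi-Palm property in the definition of a fractal distribution only governs $\eta_{x,t}$ for $t$ with $B(x,e^{-t})\subseteq B(0,1)$, so it does not directly apply to $\eta_{x,0}$; one should work instead with $\eta_{x,t_0}$, or with the tail of the progression, to land in the Birkhoff-good set). The computation reducing the constant to $\varphi(\eta_{x,0})$ via $e(kt_0/t_0)=1$ is also correct.

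The genuine gap is the one you yourself flag: showing that $(\eta,x)\mapsto\varphi(\eta_{x,0})$ is constant $\widetilde{P}$-a.e., where $\widetilde{P}=\int\delta_\eta\otimes\eta\,\dd P(\eta)$, is not carried out, and this \emph{is} the content of the lemma's second assertion, not a routine step to be deferred. Two specific problems with the sketch. First, the "fibered ergodicity statement for $\widetilde{S}_{t_0}$" is not merely a "repackaging of the $S_{t_0}$-ergodicity of $P_x$": the first part gives ergodicity \emph{within} each fiber $P_x$, but the constancy you need is \emph{across} fibers, over varying $(\eta,x)$, which is a transversal statement. Second, and more importantly, your sketch never invokes ergodicity of $P$ under the full flow $(S_t)_{t\geq 0}$, yet without it the assertion is false: if $P=\frac12(P_1+P_2)$ with $P_1,P_2$ distinct ergodic fractal distributions sharing the eigenvalue $k/t_0$, the eigenfunction could take different constant values on the two components, so no single $c$ works. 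The lemma as recalled in the paper elides the ergodicity hypothesis, but the source (Hochman--Shmerkin, Lemma 4.9 and Proposition 4.15) works with \emph{ergodic} fractal distributions, and in the paper's application $P$ is the EFD generated by a uniformly scaling measure. Your Palm-lift idea is in the right spirit (the quasi-Palm property is designed for exactly this kind of transfer), but to close the argument you would need to (i) fix $|\varphi|\equiv 1$ using $(S_t)$-ergodicity, (ii) verify the transformation rule $\varphi\bigl((\eta_{x,t})_{y,0}\bigr)=e(t\cdot k/t_0)\,\varphi(\eta_{x',0})$ for $x'=x+e^{-t}y$ so that $c$ is $\widetilde{S}_{t_0}$-invariant, and (iii) prove ergodicity of the extended Palm system, which rests on $(S_t)$-ergodicity of $P$, not on the fiber-wise ergodicity of the $P_x$. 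As written, the proposal identifies where the difficulty lives but does not resolve it.
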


We are now ready to prove Theorem \ref{pointwise_normality}. We will use Proposition \ref{prop-main-1} to relate the sceneries of $\mu$ and its tangent measures, and otherwise proceed exactly as in \cite[proof of Theorem 1.4]{HochmanShmerkin2015}

\begin{proof}[Proof of Theorem \ref{pointwise_normality}]
Let $P$ be the tangent distribution generated by $\mu$ almost everywhere. Let $\beta > 1$ be a Pisot number such that $\Phi$ is arithmetically independent of $\beta$. We will show that $\frac{k}{\log \beta}$ does not belong to the pure-point spectrum of $P$ for any integer $k \neq 0$. By \cite[Proposition 4.1]{HochmanShmerkin2015}, this is equivalent to $P$ being ergodic under the map $S_{\log \beta}$. The proof is then concluded by \cite[Theorem 1.1 and Theorem~1.2]{HochmanShmerkin2015}.

Suppose for a contradiction that $\frac{k}{\log \beta}$ does belong to the pure-point spectrum of $P$ for some $k \neq 0$. Write $t_0$ for the eigenvalue $\frac{k}{\log \beta}$ and let $\varphi \colon \mathcal{P}(\R^d) \to \C$ be an associated eigenfunction. By Lemma \ref{lemma-tangentt0}, there exists a constant $c$ such that, for $P$-almost every $\eta$ and for $\eta$-almost every $x$, $\eta$ $t_0$-generates a distribution $P_x$ such that $\varphi(\zeta) = c$ for $P_x$-almost all $\zeta$. Since, by Lemma \ref{lemma-structureoftangents}, we can choose $\eta$ so that $\eta\sim_C (\nu\cdot\mu)_{B(0,1)}$ for some $\nu\in \mathcal{P}(\mathcal{C}^{1+\alpha}(\R^d))$, we see, arguing similarly as in the proof of Theorem \ref{thm-unif-scaling-3}, that for some $h\in\mathcal{C}^{1+\alpha}(\R^d)$, there exists a sequence $(n_k)_{k\in\N}$ such that
\begin{equation*}
    \lim_{k\to\infty} \frac{1}{n_k} \sum_{k=1}^{n_k} \delta_{(h_*\mu)_{h(x),kt_0}} = P_{h(x)}
\end{equation*}
for $\mu$-almost all $x$. In particular, along the subsequence $(n_k)_{k\in\N}$, $h_*\mu$ $t_0$-generates almost everywhere a distribution for which $\varphi \equiv c$ almost surely. Since $h_* \mu$ is a quasi-Bernoulli measure for the conjugated conformal iterated function system $h\Phi = \lbrace h \circ f_i \circ h^{-1}:\ i \in \Lambda \rbrace$ for which $\lambda (h\circ f_i\circ h^{-1}) = \lambda (f_i)$, without loss of generality, by switching the iterated function system in the beginning, we may assume that this is the case for $\mu$, that is,
\begin{equation*}
    \lim_{k\to\infty} \frac{1}{n_k} \sum_{k=1}^{n_k} \delta_{\mu_{x,kt_0}} = P_{x}
\end{equation*}
for $\mu$-almost every $x$. 

We now proceed to conclude the proof by relying on the assumption that $\Phi$ is arithmetically independent of $\beta$. Recall that $t_0 = \frac{k}{\log\beta}$. Using the independence assumption, we find $n\in\N$ and $\iii \in\Lambda^*$ such that $0<|\log \lambda(f_\iii) + n\log\beta| < 1/t_0$. Equivalently, $0<|t_0 \log \lambda(f_\iii) + kn| < 1$. In particular, $t_0\log\lambda(f_\iii)$ is not an integer, whence
\begin{equation}\label{not1}
  e(-t_0\log \lambda(f_\iii)) \neq 1.
\end{equation}
Let $x_0$ be the fixed point of $f_1$ and let $U$ be a small interval centered at $x_0$. Since $\mu_U \ll \mu$ and $(f_\iii)_*\mu_U \ll \mu$, it follows from the Lebesgue-Besicovitch differentiation theorem that also $\mu_U$ and $(f_\iii)_*\mu_U$ $t_0$-generate the distribution $P_x$ along the sequence $(n_k)_{k\in\N}$, for $\mu$-almost all $x$. On the other hand, by Proposition \ref{prop-Hochman1.9} (more precisely, see \cite[Lemma 4.16]{HochmanShmerkin2015} for the version for $t_0$-generated distributions), the distribution $t_0$-generated by $(f_\iii)_*\mu_U$ along the subsequence $(n_k)_{k\in\N}$ at $f_\iii(y)$ is $S_{-\log f'_\iii(y)} P_y$. In particular,
\begin{align*}
    c &= \frac{1}{\mu(U)}\int_{f_\iii(U)} \int \varphi(\zeta)\dd P_y(\zeta)\dd(f_\iii)_*\mu(y)\\
    &= \frac{1}{\mu(U)}\int_U \int \varphi(S_{-\log f'_\iii(y)} \zeta)\dd P_y(\zeta) \dd\mu(y)\\
    &= \frac{1}{\mu(U)}\int_U \int e(-t_0 \log f'_\iii(y)) \phi(\zeta)\dd P_y(\zeta)\dd \mu(y)\\
    &\to e(-t_0\log \lambda(f_\iii))c
\end{align*}
as $\diam(U) \to 0$. In light of \eqref{not1}, this is a contradiction. With the same modifications, the proof in the case that $\Phi$ is totally non-linear goes through exactly as in \cite[proof of Theorem 1.5]{HochmanShmerkin2015}.
\end{proof}

\section{Dissonance of quasi-Bernoulli measures}\label{section-dissonance}

In this section, we prove Theorem \ref{prop-dissonance}. Write $\Pi_{d,k}$ for the set of linear maps from $\R^d$ to $\R^k$. If $P$ is a fractal distribution, then, by \cite[Theorem 1.22]{Hochmanpreprint}, for every $\pi \in \Pi_{d,k}$ and for $P$-almost every $\mu$, the push-forward measure $\pi_*\mu$ is exact-dimensional. This allows us to define
\begin{equation*}
  E_P(\pi) = \int \dim \pi_*\mu \dd P(\mu)
\end{equation*}
for all fractal distributions $P$ and linear maps $\pi \in \Pi_{d,k}$. We denote the general linear group of degree $k$ by $GL_k(\R)$. Notice that there is an action of $GL_d(\R)$ on $\Pi_{d,k}$ given by $U$ such that $\pi \mapsto \pi \circ U^{-1}$ and an action of $GL_k(\R)$ on $\Pi_{d,k}$ given by $V$ such that $\pi \mapsto V \circ \pi$. These actions commute and hence, introduce an action on $GL_k(\R) \times GL_d(\R)$. If $A \subseteq GL_d(\R)$ is a group of invertible linear maps, then $A$ induces an action on measures and hence also an action on distributions, which we keep denoting by $A$. We say that a fractal distribution is \emph{non-singular} with respect to the group $A$, if $a_*P \sim P$ for all $a \in A$.

In this section, we are dealing with product measures and it is therefore convenient to equip the Euclidean space with the maximum metric. In this metric, for measures $\mu$ and $\nu$ on $\R$, we have $(\mu \times \nu)_{(x,y), r} = \mu_{x, r} \times \nu_{y,r}$. For $t, s \geq 0$, we let $(S_t, S_s)$ denote the maps $(x,y) \mapsto (2^t x, 2^s y)$ and $\mu\times\nu \mapsto S_t\mu\times S_s\nu$. Let us next recall some facts from \cite{Hochmanpreprint} on pushing fractal distributions forward under linear or diffeomorphic maps. The first result is \cite[Proposition 1.38]{Hochmanpreprint}.

\begin{proposition} \label{prop-Hochman1.38}
Let $P$ be an ergodic fractal distribution which is non-singular with respect to a group $A\subseteq GL_d(\R)$. Then $E_P(\,\cdot\,)$ is constant on $\overline{A}$-orbits of $\Pi_{d,k}$, where $\overline{A}$ denotes the topological closure of $A$. In particular, if an orbit $O \subseteq \Pi_{d,k}$ of $GL_k(\R) \times \overline{A}$ has non-empty interior, then
$$
E_{P}(\pi) = \min\lbrace k, \dim P \rbrace
$$
for all $\pi \in O$.
\end{proposition}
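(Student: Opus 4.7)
The plan is to exploit the ergodicity of $P$ together with the non-singularity hypothesis to show that $E_P$ is $A$-invariant on $\Pi_{d,k}$, then promote this to $\overline{A}$-invariance using continuity of $\pi \mapsto E_P(\pi)$, and finally invoke a Marstrand-type projection theorem for fractal distributions to identify the constant value on a full-dimensional orbit.

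First I would fix $\pi \in \Pi_{d,k}$ and observe that $\mu \mapsto \dim \pi_*\mu$ is $(S_t)_{t\geq 0}$-invariant: the scenery flow acts by localization and rescaling, which preserves both exact-dimensionality of $\pi_*\mu$ for $P$-typical $\mu$ and the value of its dimension. Ergodicity of $P$ then forces $\mu \mapsto \dim \pi_*\mu$ to be $P$-almost surely equal to the constant $E_P(\pi)$. For $U \in A$, the change of variable $\nu = U^{-1}_*\mu$ yields
\begin{equation*}
  E_P(\pi \circ U^{-1}) = \int \dim \pi_*(U^{-1}_*\mu)\dd P(\mu) = \int \dim \pi_*\nu\dd (U^{-1}_*P)(\nu),
\end{equation*}
and since $U^{-1}_*P \sim P$, the integrand $\nu \mapsto \dim \pi_*\nu$ is almost surely equal to the same constant $E_P(\pi)$ under $U^{-1}_*P$ as under $P$; hence the last integral equals $E_P(\pi)$, proving $A$-invariance.

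The central technical step is to extend this invariance to $\overline{A}$, which requires the continuity of $\pi \mapsto E_P(\pi)$ on the set of full-rank maps. Analogously to how the continuity of $P\mapsto \dim P$ on $\mathcal{FD}$ is obtained in \cite{KaenmakiSahlstenShmerkin2015GMT}, one writes, for any fixed $0<r<1$,
\begin{equation*}
  E_P(\pi) = \int \frac{\log \pi_*\mu(B(0,r))}{\log r}\dd P(\mu),
\end{equation*}
using that $\pi_*\mu$ is exact-dimensional for $P$-almost every $\mu$ by \cite[Theorem 1.22]{Hochmanpreprint}. The integrand depends continuously on $(\pi,\mu)$ off a set that can be controlled uniformly in $\pi$ on compact subsets of full-rank projections; this is the main obstacle, and the care needed is in showing that no mass concentrates on spheres centered at the origin in a way that depends discontinuously on $\pi$, which can be handled using the uniform bounds supplied by the fractal-distribution structure (cf.\ \cite[Theorem 3.22]{KaenmakiSahlstenShmerkin2015GMT}). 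Once the continuity is in hand, approximating $U \in \overline{A}$ by $U_n \in A$ gives $E_P(\pi \circ U^{-1}) = \lim_n E_P(\pi \circ U_n^{-1}) = E_P(\pi)$.

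For the final claim, any $V \in GL_k(\R)$ acts on measures by a bi-Lipschitz diffeomorphism of $\R^k$ and therefore preserves dimension, so $E_P(V \circ \pi) = E_P(\pi)$; combined with the previous step, $E_P$ is constant on every orbit of $GL_k(\R) \times \overline{A}$. If such an orbit $O$ has non-empty interior, then $E_P$ is constant on an open subset of $\Pi_{d,k}$. On the other hand, by Hochman's version of Marstrand's projection theorem for fractal distributions in \cite{Hochmanpreprint}, one has $E_P(\pi) = \min\{k,\dim P\}$ for Lebesgue-almost every $\pi$, so the constant value on the open subset of $O$ must equal $\min\{k,\dim P\}$, completing the argument.
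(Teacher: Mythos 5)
The paper does not supply its own proof of this statement; it simply cites \cite[Proposition~1.38]{Hochmanpreprint}, so the comparison is between your reconstruction and Hochman's original argument. Your overall strategy — ergodicity forces $\mu\mapsto\dim\pi_*\mu$ to be $P$-a.e.\ constant, non-singularity yields $A$-invariance of $E_P$ by change of variable, a regularity property of $\pi\mapsto E_P(\pi)$ upgrades this to $\overline{A}$-invariance, and a Marstrand-type theorem for fractal distributions pins down the constant on a full-dimensional orbit — is the right skeleton and matches Hochman's. Steps one, two and four are fine as written.

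The genuine gap is in your ``central technical step'': continuity of $\pi\mapsto E_P(\pi)$ on the full-rank maps is \emph{false} in general, and the integral formula $E_P(\pi)=\int\frac{\log\pi_*\mu(B(0,r))}{\log r}\dd P(\mu)$ you propose does not hold (it relies on the origin being a typical point for $\pi_*\mu$, which the quasi-Palm property gives for $\mu$ but not for its projections). A concrete counterexample to continuity: take $\mu=\mu_1\times\mu_2$ a product of two self-similar Cantor measures on $\R$, each of dimension $\tfrac12$, with independent contraction ratios. The ergodic fractal distribution $P$ it generates has $E_P(\pi_\theta)=1$ for generic directions $\theta$, while $E_P(\pi_0)=E_P(\pi_{\pi/2})=\tfrac12$, so $E_P$ is discontinuous at the coordinate projections. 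What is true — and what Hochman actually uses — is that $\pi\mapsto E_P(\pi)$ is \emph{lower semicontinuous}. Fortunately, lower semicontinuity together with the group structure of $A$ already gives $\overline A$-invariance: if $U_n\to U$ with $U_n\in A$, then $\pi\circ U_n^{-1}\to\pi\circ U^{-1}$ gives $E_P(\pi\circ U^{-1})\le\liminf_n E_P(\pi\circ U_n^{-1})=E_P(\pi)$; and since $E_P\bigl((\pi\circ U^{-1})\circ U_n\bigr)=E_P(\pi\circ U^{-1})$ for each $n$ by $A$-invariance while $(\pi\circ U^{-1})\circ U_n\to\pi$, lower semicontinuity at $\pi$ gives the reverse inequality $E_P(\pi)\le E_P(\pi\circ U^{-1})$. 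You should replace the continuity claim by lower semicontinuity (established alongside \cite[Theorem~1.22]{Hochmanpreprint}) and run this two-sided argument; the final step is then unchanged, as lower semicontinuity at a point where the a.e.\ bound $\min\{k,\dim P\}$ is attained on a dense set suffices to conclude.
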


The next theorem is a consequence of \cite[Theorem 1.10]{HochmanShmerkin2012} and, translated to the language of fractal distributions, it can be found at \cite[Theorem 1.23]{Hochmanpreprint}. 

\begin{theorem} \label{thm-Hochman1.23}
Let $\mu$ be a Radon measure on $\R^d$ which generates an ergodic fractal distribution $P$ along a subsequence. Then
$$
\dim \pi_* \mu \geq E_P(\pi)
$$
for all $\pi \in \Pi_{d,k}$.
\end{theorem}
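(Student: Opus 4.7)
The plan is to apply the local entropy averages principle of Hochman and Shmerkin to $\pi_*\mu$, and then use the generation assumption to identify the resulting asymptotic average of entropies with $E_P(\pi)$. The first ingredient is the pointwise dimension lower bound
\begin{equation*}
  \ldimloc(\pi_*\mu,\pi(x)) \;\ge\; \liminf_{N\to\infty} \frac{1}{N}\sum_{n=0}^{N-1} \frac{1}{q}\, H\bigl((\pi_*\mu)^{\mathcal{D}_{n}(\pi(x))},\mathcal{D}_{n+q}\bigr),
\end{equation*}
valid for every fixed $q\in\N$ and $\mu$-a.e.\ $x$ (this is the standard ``local entropy averages'' inequality; it follows from a Borel--Cantelli argument applied to the Shannon--McMillan type telescoping of $-\log\pi_*\mu(\mathcal{D}_{Nq}(\pi(x)))$). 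It suffices therefore to show that the liminf on the right is bounded below by $E_P(\pi)$, since $E_{\mu}(\pi)\ge\ldimh\pi_*\mu$ by standard arguments.

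Next, I exploit linearity of $\pi$ to relate the two sceneries. Because $\pi$ is linear, the magnification $(\pi_*\mu)_{\pi(x),t}$ is the pushforward of $\mu_{x,t}$ under $\pi$, up to a renormalization factor comparable to $\mu(B(x,e^{-t}))/\pi_*\mu(B(\pi(x),e^{-t}))$; this passes through to the dyadic magnifications at a uniformly bounded entropy cost, so
\begin{equation*}
  \frac{1}{q}\,H\bigl((\pi_*\mu)^{\mathcal{D}_{n}(\pi(x))},\mathcal{D}_{n+q}\bigr) \;=\; \frac{1}{q}\,H\bigl(\pi_*(\mu_{x,n}),\mathcal{D}_{q}\bigr) \;+\; o_{q\to\infty}(1),
\end{equation*}
where the error is uniform in $n$. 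The nuisance is the standard one: the partition $\mathcal{D}_q$ is not continuous in weak$^*$ topology on the edges of cubes, which is handled by a random translation argument (or by a continuous thickening of $\mathcal{D}_q$) ensuring that $\nu\mapsto H(\pi_*\nu,\mathcal{D}_q)$ is bounded and $P$-a.s.\ continuous.

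By the generation assumption, along a subsequence $N_j\to\infty$ the empirical measures $\frac{1}{N_j}\sum_{n=0}^{N_j-1}\delta_{\mu_{x,n}}$ converge weakly to $P$ for $\mu$-a.e.\ $x$. Combining this with the above equality and the fact that the integrand is $P$-a.s.\ continuous and uniformly bounded, I get
\begin{equation*}
  \liminf_{j\to\infty} \frac{1}{N_j}\sum_{n=0}^{N_j-1} \frac{1}{q}H\bigl((\pi_*\mu)^{\mathcal{D}_{n}(\pi(x))},\mathcal{D}_{n+q}\bigr) \;\ge\; \int \frac{1}{q}H(\pi_*\nu,\mathcal{D}_q)\,dP(\nu) \;-\; o_{q\to\infty}(1).
\end{equation*}
Finally, since $P$-a.e.\ $\nu$ is exact-dimensional and so is $\pi_*\nu$ (by Hochman's result cited before the theorem statement), we have $\frac{1}{q}H(\pi_*\nu,\mathcal{D}_q)\to\dim\pi_*\nu$ as $q\to\infty$; the integrand is dominated by the ambient dimension $k$, so dominated convergence yields the limit $\int \dim\pi_*\nu\,dP(\nu)=E_P(\pi)$. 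Letting $q\to\infty$ in the previous display finishes the proof.

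The main obstacle will be Step 2, the clean transfer from sceneries of $\mu$ to sceneries of $\pi_*\mu$. The issue is twofold: the renormalization factor $\pi_*\mu(B(\pi(x),e^{-t}))/\mu(B(x,e^{-t}))$ must be controlled on average (this follows from Fatou/subadditivity and the finite ambient dimension), and the discontinuities of dyadic partitions under pushforward must be absorbed by a random translate of the grid, which is standard but technical. Everything else is a direct unwinding of definitions and an application of the scenery flow generation hypothesis.
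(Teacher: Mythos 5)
The paper does not actually prove this statement; it quotes it from Hochman's preprint (Theorem 1.23 there), which in turn rests on the local entropy averages projection theorem of Hochman--Shmerkin \cite{HochmanShmerkin2012}. Your skeleton (entropy averages, then weak convergence of sceneries, then $\tfrac1qH(\pi_*\nu,\mathcal{D}_q)\to\dim\pi_*\nu$ for $P$-typical $\nu$) is the natural one, but two steps fail as written. First, Step 2 is false: $(\pi_*\mu)|_{B(\pi(x),e^{-t})}$ equals $\pi_*$ of $\mu$ restricted to the entire slab $\pi^{-1}(B(\pi(x),e^{-t}))$, which in general carries mass of $\mu$ located far from $x$; the piece of $\mu$ near $x$ may account for a vanishing and uncontrolled proportion of that mass, and the rest of the slab can project with completely different entropy. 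Hence $\tfrac1q H\bigl((\pi_*\mu)^{\mathcal{D}_n(\pi(x))},\mathcal{D}_{n+q}\bigr)$ and $\tfrac1q H\bigl(\pi_*(\mu_{x,n}),\mathcal{D}_q\bigr)$ need not be close, and neither averaging the ``renormalization factor'' nor randomizing the grid addresses this. What is true, and is precisely the content of the projection version of local entropy averages (Theorem 4.4 in \cite{HochmanShmerkin2012}), is the one-sided bound $\ldimloc(\pi_*\mu,\pi(x)) \ge \liminf_{N\to\infty}\frac1N\sum_{n=0}^{N-1}\frac1q H\bigl(\pi_*(\mu^{\mathcal{D}_n(x)}),\mathcal{D}_q\bigr)-O(1/q)$ for $\mu$-a.e.\ $x$, proved by concavity of conditional entropy over all the components of $\mu$ in the slab together with a martingale law of large numbers taken with respect to $\mu$; one never identifies the scenery of $\pi_*\mu$ at $\pi(x)$ with the projected scenery of $\mu$ at $x$. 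Your Steps 1--2 should be replaced by that statement.

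Second, there is a subsequence mismatch. The entropy-averages bound controls the $\liminf$ of the Ces\`aro averages over \emph{all} $N$, while the hypothesis only gives $\frac{1}{N_j}\sum_{n<N_j}\delta_{\mu_{x,n}}\to P$ along a subsequence; your Step 3 therefore bounds the liminf along $N_j$, which dominates (rather than is dominated by) the quantity Step 1 actually controls, so the chain of inequalities does not close. This is not cosmetic: if $N_{j+1}/N_j\to\infty$, subsequential generation is perfectly compatible with the full-sequence averages of $\frac1qH(\pi_*(\mu^{\mathcal{D}_n(x)}),\mathcal{D}_q)$ dipping near $0$ at intermediate times, so no argument consisting only of pointwise entropy averages plus weak convergence along $N_j$ can yield the theorem; handling generation along a subsequence is part of the real content of the cited result and uses additional structure (the CP-distribution machinery of \cite{HochmanShmerkin2012} and the $S_t$-invariance/quasi-Palm property of the ergodic fractal distribution $P$). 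Your remaining steps -- treating the discontinuity of $\nu\mapsto H(\pi_*\nu,\mathcal{D}_q)$ via null boundaries or a randomized grid, and passing to the limit $q\to\infty$ using exact dimensionality of $\pi_*\nu$ for $P$-typical $\nu$ -- are sound.
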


Before going into the proof of Theorem \ref{prop-dissonance}, we show that for any quasi-Bernoulli measures $\mu$ and $\nu$ on the line, their product $\mu\times\nu$ generates an ergodic fractal distribution almost everywhere, along a subsequence.

\begin{lemma}\label{lemma-productuscselfconformal}
    Let $\Phi$ and $\Psi$ be conformal iterated function systems on $\R$, and let $\mu$ and $\nu$ be quasi-Bernoulli measures associated to $\Phi$ and $\Psi$, respectively. Then there exists a sequence $(n_k)_{k\in\N}$ and an ergodic fractal distribution $P$ with $\dim P \geq \dim (\mu\times\nu)$ such that 
    \begin{equation*}
        \lim_{k\to\infty} \frac{1}{n_k}\int_0^{n_k} \delta_{(\mu\times\nu)_{(x,y),t}}\dd t = P
    \end{equation*}
    for $\mu\times\nu$-almost all $(x,y)$. 
\end{lemma}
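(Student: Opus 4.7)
The plan is to reduce the problem to an ergodic-theoretic statement about joinings of the one-dimensional fractal distributions $P_\mu$ and $P_\nu$ generated by $\mu$ and $\nu$. By Theorem~\ref{thm-unif-scaling-1}, such $P_\mu, P_\nu$ exist as ergodic fractal distributions and are uniformly scaled by $\mu$ and $\nu$. Since $\mu$ and $\nu$ are exact-dimensional (Feng--Hu, as cited in \S\ref{sec:tangent-distribution}), so is the product $\mu\times\nu$ in the maximum metric, with $\dim(\mu\times\nu) = \dim\mu + \dim\nu$. Proposition~\ref{prop-Hochman1.19} then forces every tangent fractal distribution of $\mu\times\nu$ at $\mu\times\nu$-almost every $(x,y)$ to have dimension exactly $\dim(\mu\times\nu)$, so the dimension bound in the conclusion is automatic once we exhibit \emph{any} ergodic fractal distribution as the almost-sure limit.

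Equipping $\R^2$ with the maximum metric, one has $(\mu\times\nu)_{(x,y),t} = \mu_{x,t}\times\nu_{y,t}$, so the scenery empirical measure of $\mu\times\nu$ factors through the continuous product map $F\colon \mathcal{P}(\R)\times\mathcal{P}(\R)\to\mathcal{P}(\R^2)$, $F(\zeta_1,\zeta_2) = \zeta_1\times\zeta_2$:
\[
\frac{1}{T}\int_0^T \delta_{(\mu\times\nu)_{(x,y),t}}\,\dd t \;=\; F_*\biggl(\frac{1}{T}\int_0^T \delta_{(\mu_{x,t},\nu_{y,t})}\,\dd t\biggr),
\]
and $F\circ (S_t,S_t) = S_t\circ F$, so $F$ sends $(S_t,S_t)$-ergodic measures to $S_t$-ergodic ones. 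By the uniform scaling of $\mu$ and $\nu$, the marginals of the inner joint empirical measure $J_{(x,y),T}$ converge to $P_\mu$ and $P_\nu$ at $\mu\times\nu$-almost every $(x,y)$, so all of its weak-$*$ accumulation points are $(S_t,S_t)$-invariant joinings of $P_\mu$ and $P_\nu$. A diagonal argument over a countable dense family of test functions in $C_c(\mathcal{P}(\R)^2)$ together with Fubini's theorem yields a subsequence $(n_k)$ along which $J_{(x,y),n_k}\to \widetilde{P}(x,y)$ for $\mu\times\nu$-almost every $(x,y)$, each $\widetilde{P}(x,y)$ being such a joining.

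The main obstacle is to identify a single ergodic joining as the almost sure limit. The plan is to average to form $\overline{P} = \iint \widetilde{P}(x,y)\,\dd(\mu\times\nu)$, to ergodically decompose $\overline{P} = \int R_\omega\,\dd Q(\omega)$ under $(S_t,S_t)$ into ergodic joinings, and then to combine Theorem~\ref{thm-Hochman1.6} (typical measures from any ergodic fractal distribution are uniformly scaling), the quasi-Palm property of fractal distributions, and the uniform scaling of $\mu$ and $\nu$ to show that the joint scenery at $\mu\times\nu$-almost every $(x,y)$ concentrates on a single ergodic component $R$ along a suitable further diagonal subsequence. Setting $P = F_* R$, which is a fractal distribution by Theorem~\ref{thm-Hochman1.7} (as a tangent distribution of $\mu\times\nu$) and is ergodic by the factor-map property, produces the desired ergodic fractal distribution with $\dim P = \dim(\mu\times\nu)$. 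The hard step is precisely this concentration: in general $(P_\mu\times P_\nu,(S_t,S_t))$ is not ergodic (the case $\mu = \nu$ is a striking example), so selecting the same ergodic component for almost every $(x,y)$ requires a delicate combination of the pointwise ergodic theorem for $\overline{P}$ with Hochman's description of typical orbits in a fractal distribution.
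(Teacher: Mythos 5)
Your plan bottoms out at exactly the step you flag as "the hard step," and that step is precisely what cannot be done with the tools you invoke. Knowing that $\mu$ is uniformly scaling generating $P_\mu$ and $\nu$ generating $P_\nu$ tells you only the marginals of the joint empirical measure $J_{(x,y),T}$; it places no constraint on which $(S_t,S_t)$-ergodic joining of $P_\mu$ and $P_\nu$ is approached, nor on whether the same one is approached for almost every $(x,y)$. The quasi-Palm property and Theorem~\ref{thm-Hochman1.6} describe the internal structure of a fractal distribution but do not single out a joining, and averaging to form $\overline{P}$ and ergodically decomposing it does not help: the pointwise ergodic theorem for $\overline{P}$ describes orbits of $\overline{P}$-typical pairs of measures, not orbits of pairs $(\mu_{x,t},\nu_{y,t})$ for $\mu\times\nu$-typical $(x,y)$, and there is no a priori relation between the two. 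So the concentration claim is a genuine gap, not a detail.

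The paper takes a different route, and the difference is essential. It never starts from the one-dimensional generated distributions $P_\mu$ and $P_\nu$. Instead it applies Lemma~\ref{lemma-structureoftangents} to each factor to identify the tangent measures of $\mu\times\nu$ as measures of the form $(\zeta_1\cdot\mu)_{B(0,1)}\times(\zeta_2\cdot\nu)_{B(0,1)}$ with $\zeta_1,\zeta_2\in\mathcal{P}(\mathcal{C}^{1+\alpha}(\R))$; by Theorems~\ref{thm-Hochman1.6}--\ref{thm-Hochman1.7} and Proposition~\ref{prop-Hochman1.19} a typical such tangent measure is uniformly scaling and generates a single ergodic fractal distribution $P$ with $\dim P\geq\dim(\mu\times\nu)$. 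The heavy lifting is then done by Proposition~\ref{prop-main-1}, the entropy/scenery transfer estimate: it shows that the scenery of $\nu\cdot\mu$ is asymptotic (on an integral-average scale) to the scenery of $h_*\mu$ for $\nu$-typical $h$, which pulls the uniform scaling of the tangent measure back to $h_*\mu\times g_*\nu$ along a subsequence. Proposition~\ref{prop-Hochman1.9} then relates the scenery of $h_*\mu\times g_*\nu$ at $(h(x),g(y))$ to the scenery of $\mu\times\nu$ at $(x,y)$ via the differential, and $(S_t,S_t)$-invariance of $P$ absorbs the resulting factors $(S_{\log h'(x)},S_{\log g'(y)})$. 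Your write-up contains no analogue of Proposition~\ref{prop-main-1}, and without it the argument cannot close; supplying that mechanism is what the lemma actually requires.
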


\begin{proof}
    Since $(\mu\times\nu)_{(x,y),t} = \mu_{x,t}\times\nu_{y,t}$, an application of Proposition \ref{prop-Hochman1.19} and Lemma \ref{lemma-structureoftangents} shows that there exists an ergodic fractal distribution $P$ with $\dim P \geq \dim(\mu\times\nu)$ and measures $\zeta_1,\zeta_2\in\mathcal{P}(\mathcal{C}^{1+\alpha}(\R))$ such that the measure $(\zeta_1\cdot\mu)_{B(0,1)} \times (\zeta_2\cdot\nu)_{B(0,1)}$ is uniformly scaling and generates the distribution $P$. Arguing similarly as in the proof of Theorem \ref{thm-unif-scaling-2}, we see that for some $(h,g)\in\mathcal{C}^{1+\alpha}(\R)$ there is a subsequence $(n_k)_{k\in\N}$ such that 
    \begin{equation}\label{eq-subsequenceusc}
        \lim_{k\to\infty} \frac{1}{n_k}\int_0^{n_k} \delta_{(h_*\mu \times g_*\nu)_{(h(x),g(y)), t}}\dd t = P
    \end{equation}
    for $\mu\times\nu$-almost all $(x,y)$. On the other hand, by Proposition \ref{prop-Hochman1.9}, we have
    \begin{equation*}
        \lim_{n\to\infty} d_{\rm LP}\biggl( \frac{1}{n}\int_0^n  \delta_{(h_*\mu \times g_*\nu)_{(h(x),g(y)), t}}\dd t, \frac{1}{n}\int_0^n (S_{-\log h'(x)}, S_{-\log g'(y)})_*\delta_{(\mu \times \nu)_{(x,y), t}}\dd t \biggr) = 0
    \end{equation*}
    for $\mu\times\nu$-almost all $(x,y)$. Combining the above with \eqref{eq-subsequenceusc} and $(S_t,S_t)$-invariance of $P$, we see that 
    \begin{equation*}
        \lim_{k\to\infty} \frac{1}{n_k}\int_0^{n_k} \delta_{(\mu \times \nu)_{(x,y), t}}\dd t = (S_{m+\log h'(x)}, S_{m+\log g'(y)})_*P
    \end{equation*}
   for $\mu\times\nu$-almost all $(x,y)$ and for every $m\geq \max\lbrace -\log h'(x), -\log g'(y)\rbrace$. Since linear maps take ergodic fractal distributions to ergodic fractal distributions by \cite[Proposition 1.8]{Hochmanpreprint}, this completes the proof.
\end{proof}

We are now ready to prove Theorem \ref{prop-dissonance}.

\begin{proof}[Proof of Theorem \ref{prop-dissonance}]
Let $P$ be the ergodic fractal distribution given by Lemma \ref{lemma-productuscselfconformal}. Let $\mathtt{i} \in \Gamma^*$ and $j\in\Lambda^*$.   For any open intervals $U$ and $V$ with $\mu(U)>0$ and $\nu(V)>0$, we have
$$
(f_{\mathtt{i}})_*\mu_U \times (g_{\mathtt{j}})_*\nu_V \ll \mu \times \nu.
$$
In particular, by the Lebesgue-Besicovitch differentiation theorem, also $(f_{\mathtt{i}})_*\mu_U \times (g_{\mathtt{j}})_*\nu_V $ generates $P$. On the other hand, since $\mu_U \times \nu_V = ((f_{\mathtt{i}})^{-1}, (g_{\mathtt{j}})^{-1})_*((f_{\mathtt{i}})_*\mu_U \times (g_{\mathtt{j}})_*\nu_V)$, the measure $\mu_U \times \nu_U$ generates at almost every $(x,y)$ the distribution $(S_{\log (f_{\mathtt{i}})'(x)}, S_{\log (g_{\mathtt{j}})'(y)})_*P$ by Proposition \ref{prop-Hochman1.9}. In particular, $P$ is invariant under $(S_{\log (f_{\mathtt{i}})'(x)}, S_{\log (g_{\mathtt{j}})'(y)})$ for all $\mathtt{i} \in \Gamma^*, j\in\Lambda^*$  and $(x,y)\in (f_{\mathtt{i}})(U)\times (g_{\mathtt{j}})(V)$.

Let now $U$ and $V$ be small open intervals centered at the fixed points of $f_{\mathtt{i}}$ and $g_{\mathtt{j}}$, respectively. Taking $\diam(U), \diam(V)\to 0$ and using the $(S_t,S_t)$-invariance of $P$ and continuity of $(t,s)\mapsto (S_t,S_s)_*P$, we see that $P$ is invariant under $(S_{\log \lambda(f_{\mathtt{i}}) -\log \lambda(g_{\mathtt{j}})}, {\rm Id})$ for all $\mathtt{i} \in \Gamma^*, j\in\Lambda^*$ .  Write $D= \lbrace\log \lambda(f_{\mathtt{i}}) -\log \lambda(g_{\mathtt{j}}) :\ \mathtt{i} \in \Gamma^*, j\in\Lambda^*\rbrace$.  Recall our assumption that for any $\varepsilon>0$, there exist $\mathtt{i} \in \Gamma^*, j\in\Lambda^*$ such that $0<|\log \lambda(f_{\mathtt{i}}) -\log \lambda(g_{\mathtt{j}})|<\varepsilon$.  Using this, and the relation $\lambda(f\circ f)=\lambda(f)\lambda(f)$, we deduce that  the set $D$ is dense either in $[0,+\infty)$ or in $(-\infty,0]$.  Without loss of generality, let us suppose that  $D$ is dense in $[0,+\infty)$.  Let $D'=D\cap [0,+\infty)$ and $A = \lbrace (S_{t}, {\rm Id}) : t\in D' \rbrace$.   Write $\pi$ for the map $(x,y) \mapsto x+y$. Since the orbit of $\pi$ under $GL_1(\R) \times \overline{A}$ is $\lbrace (x,y) \mapsto \alpha(2^{t} x + y) : \alpha> 0 \text{ and }  t\ge 0 \rbrace$ which has nonempty interior and contains $\pi$, Proposition \ref{prop-Hochman1.38} asserts that $E_P(\pi) = \min \lbrace 1, \dim P \rbrace$. Since $\dim P \geq \dim (\mu \times \nu) = \dim \mu + \dim \nu$ by the choice of $P$ and exact-dimensionality of $\mu$ and $\nu$, and $\dim(\mu*\nu) = \dim \pi_*\mu \geq E_P(\pi)$ by Theorem \ref{thm-Hochman1.23}, we have completed the proof. 
\end{proof}

\section{Self-affine measures} \label{sec-self-affine-proj}

In this section, we prove Theorem \ref{self-affine_projection}. Recall that if $\Phi = (f_i)_{i \in \Lambda}$ is a diagonal iterated function system, then the maps $f_i\colon \R^2 \to \R^2$, $f_i(x)=A_ix+a_i$, are affine such that
\begin{equation*}
  A_i =
  \begin{pmatrix}
    \rho_i & 0 \\ 
    0 & \lambda_i
  \end{pmatrix}
\end{equation*}
for all $i \in \Lambda$.

\begin{proposition}\label{thm-self-affine-usc}
  Let $\Phi$ be a diagonal iterated function system on $\R^2$ with attractor $K$ satisfying the rectangular strong separation condition. Then any self-affine measure $\mu$ on $K$ is uniformly scaling and generates an ergodic fractal distribution. Furthermore, if $\mu$ has simple Lyapunov spectrum, then $P = (S_{-\log \rho_\iii}, S_{-\log \lambda_\mathtt{j}})_* P$ for all $\iii, \mathtt{j}\in\Lambda^*$. 
\end{proposition}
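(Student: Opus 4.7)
The plan is to mirror the framework of Section \ref{section: proof-scenery-flow}, replacing the conformal iterated function system by the diagonal affine one and using the rectangular strong separation condition in place of bounded distortion.

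\emph{Structure of tangents and uniformly scaling property.} I would first establish an analogue of Lemma \ref{lemma-structureoftangents}: for $\mu$-almost every $x$ and every tangent distribution $Q$ of $\mu$ at $x$, $Q$-almost every measure is equivalent to $(\nu \cdot \mu)_{B(0,1)}$ for some $\nu \in \mathcal{P}(\mathcal{A})$, where $\mathcal{A}$ is the space of diagonal affine contractions of $\R^2$. Given $x=\Pi(\mathtt{i})$ and $t>0$, let $\Lambda(x,t)$ collect the words $\mathtt{j}$ for which $f_{\mathtt{j}}([-1,1]^2)$ meets $B(x,e^{-t})$ with diameter comparable to $e^{-t}$. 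The rectangular SSC forces these rectangles to have bounded overlap, so
\begin{equation*}
    \mu|_{B(x,e^{-t})} = \sum_{\mathtt{j} \in \Lambda(x,t)} p_{\mathtt{j}} (f_{\mathtt{j}})_*\mu|_{B(x,e^{-t})},
\end{equation*}
and $\mu_{x,t} = (\nu(x,t) \cdot \mu)_{B(0,1)}$ for a finitely-supported $\nu(x,t) \in \mathcal{P}(\mathcal{A})$. Compactness yields the representation for limits.

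\emph{Applying Proposition \ref{prop-main-1}.} With this representation, I verify the hypotheses of Proposition \ref{prop-main-1}: Theorem \ref{thm-Hochman1.6} ensures typical measures under tangent distributions are uniformly scaling and exact-dimensional; exact-dimensionality of $\mu$ (classical for Bernoulli measures on rectangular-SSC carpets) together with Proposition \ref{prop-Hochman1.19} matches the entropy of $\nu \cdot \mu$ to that of $\mu$; and $\dim \mu > 0$ gives positive lower Hausdorff dimension (the opposite case being trivial). Proposition \ref{prop-main-1} then identifies the sceneries of $\mu$ asymptotically with those of $\nu \cdot \mu$, and Proposition \ref{prop-Hochman1.9} transfers this back to $\mu$'s sceneries. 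The ergodic-component argument of the proof of Theorem \ref{thm-unif-scaling-2} shows any two ergodic components of tangent distributions of $\mu$ are related by a diagonal linear action --- and, since the matrices involved are of the form $\mathrm{diag}(\rho_{\mathtt{i}},\lambda_{\mathtt{i}})$ drawn from the same Bernoulli cylinder, they cancel and the two components agree. Hence $\mu$ is uniformly scaling and generates an ergodic fractal distribution $P$.

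\emph{The extended invariance.} The synchronized invariance $(S_{-\log \rho_{\mathtt{i}}}, S_{-\log \lambda_{\mathtt{i}}})_* P = P$ follows from $(f_{\mathtt{i}})_*\mu \ll \mu$ (rectangular SSC plus the Bernoulli property), so both measures generate $P$, and Proposition \ref{prop-Hochman1.9} applied to the diagonal affine map $f_{\mathtt{i}}$ delivers the assertion. The \emph{decoupled} invariance $(S_{-\log \rho_{\mathtt{i}}}, S_{-\log \lambda_{\mathtt{j}}})_* P = P$ for independent $\mathtt{i}, \mathtt{j}$ is where the simple Lyapunov spectrum enters. My plan is to exploit a product-like structure of tangents: since $\chi_1 \neq \chi_2$, an elongated-rectangle (Ledrappier--Young type) analysis disintegrates $P$-typical measures along the less-contracted direction into slices which are essentially linear images of the strongly-contracting coordinate projection of $\mu$. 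Both factors are self-similar measures on $\R$, to which Theorem \ref{thm-unif-scaling-1} applies, giving coordinate-wise uniformly-scaling axis distributions whose invariances under $S_{-\log \rho_{\mathtt{i}}}$ and $S_{-\log \lambda_{\mathtt{j}}}$ are independent of one another. Reassembling these invariances into $P$ yields the full claim.

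I expect the main obstacle to be the last step: extracting a clean product-of-axis-distributions structure of $P$ out of the simple Lyapunov spectrum hypothesis. The synchronized invariance together with the automatic $(S_t,S_t)$-invariance only gives invariance under a one-parameter slide through the diagonal orbit, which is \emph{not} on its own enough to decouple; the anisotropy provided by $\chi_1 \neq \chi_2$ must be used in an essential way via the slicing above. Everything else is a direct transcription of the conformal arguments of Section \ref{section: proof-scenery-flow}.
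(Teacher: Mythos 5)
Your ``direct transcription'' of the conformal argument runs into a fundamental obstruction at the very first step, namely the proposed analogue of Lemma~\ref{lemma-structureoftangents}. In the conformal case the rescaled maps $g_{x,t}\circ f_{\mathtt{i}}$ (for $\mathtt{i}\in\Lambda(x,t)$) all live in a single compact class $\mathcal{C}^{1+\alpha}_{\rho,M}(B(0,1))$ thanks to bounded distortion, which is what makes the compactness/limit argument go through and yields tangent measures of the form $\int h_*\mu\,\dd\nu(h)$. For a diagonal affine system with \emph{simple} Lyapunov spectrum this collapses: if you take cylinders $f_{\mathtt{j}}([-1,1]^2)$ with diameter comparable to $2^{-t}$, then their shorter side is $\ll 2^{-t}$, so the rescaled maps $g_{x,t}\circ f_{\mathtt{j}}$ are diagonal affine maps whose eccentricity tends to infinity with $t$. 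The family is not precompact, and the sets $E_r$ are not closed. Equivalently, the scenery $\mu_{x,t}$ is \emph{not} a convex combination of bounded-distortion affine images of $\mu$: it is a convex combination of ever-thinner restrictions, and in the limit it degenerates to a measure concentrating near a line. Your claimed representation of tangent measures therefore does not hold, and with it the entire ``apply Proposition~\ref{prop-main-1} and proceed as in Theorem~\ref{thm-unif-scaling-2}'' step is unavailable as stated.

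The slicing you sketch at the very end, using the Lyapunov anisotropy to disintegrate along the weakly-contracting direction, is actually the right idea --- but it needs to be the \emph{foundation} of the proof, not a refinement bolted on after uniform scaling has already been established. This is exactly how the paper proceeds: Lemmas~\ref{lemma-productstructure} and~\ref{lemma-scenerymeasures} show that at a full-density set of scales the scenery of $\mu$ is asymptotic to rescaled products $(\pi_*\mu)_{x,\cdot}\times\mu_{\mathtt{i}}$, where $\mu_{\mathtt{i}}$ is the conditional measure of $\mu$ on the fiber of the orthogonal projection $\pi$ onto the weakly-contracted axis. Only then does the structure-of-tangents lemma apply, and in a one-dimensional guise: the measure $\nu$ in $\nu\cdot\pi_*\mu$ is supported on affine maps of $\R$, not $\R^2$, which restores the compactness needed. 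Proposition~\ref{prop-main-1} is then applied to the factor $\pi_*\mu$, and Lemma~\ref{lemma-productusc} gives both that the products $\pi_*\mu\times\mu_{\mathtt{i}}$ generate a common ergodic fractal distribution $P$ and that $P$ already carries the decoupled invariance $(S_{-\log\rho_{\mathtt{i}}},S_{-\log\lambda_{\mathtt{j}}})_*P=P$ (from self-similarity of $\pi_*\mu$ in one coordinate and the shift relation for $\mu_{\mathtt{i}}$ in the other). The identification $Q=(S_{-\log r_h},\mathrm{Id})_*P=P$ then closes both the uniform scaling claim and the invariance claim simultaneously. In short, you have identified the correct mechanism but in the wrong order, and the step you label as ``everything else is a direct transcription'' is precisely where the conformal argument breaks.
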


Relying on Proposition \ref{thm-self-affine-usc}, the proof of Theorem \ref{self-affine_projection} is essentially identical to the last paragraph of the proof of Theorem \ref{prop-dissonance}.

\begin{proof}[Proof of Theorem \ref{self-affine_projection}]
    Let $\mu$ be a self-affine measure on the attractor of $\Phi$ with simple Lyapunov spectrum, and let $P$ be the ergodic fractal distribution generated by $\mu$ given by Proposition \ref{thm-self-affine-usc}. Let $\pi\in \Pi_{2,1}$ be a non-principal orthogonal projection and write $\pi(x,y) = ax + by$ where we assume $a,b >0$; the case $a,b < 0$ is identical. Choose $N$ large enough so that $2^{-N} \leq a,b\leq 2^N$.  Recall our  assumption that for any $\varepsilon>0$, there exist $\mathtt{i}, \mathtt{j}\in \Lambda^*$ such that  $0<|\log \rho_{\mathtt{i}} - \log \lambda_{\mathtt{j}}|<\varepsilon$.  It follows that  set $D = \lbrace -\log \rho_\iii + \log \lambda_{\mathtt{j}} : \iii,\mathtt{j}\in\Lambda^*\rbrace$ is dense either  in $[0,N]$ or in $[-N,0]$.  Without loss of generality, we suppose that $D$ is dense in $[0,N]$. Let $A = \lbrace (S_t, {\rm Id}): t\in D\cap [0,N]\rbrace$. Since $\mu$ has simple Lyapunov spectrum, it follows from Proposition \ref{thm-self-affine-usc} that $P$ is invariant under every map in $A$. On the other hand, the orbit of $\pi$ under $GL_1(\R)\times \overline{A}$ contains $\pi$ and has non-empty interior, whence $E_P(\pi) = \min \lbrace 1, \dim P \rbrace = \min \lbrace 1, \dim \mu \rbrace$ by Propositions \ref{prop-Hochman1.38} and \ref{prop-Hochman1.19}. The proof is now finished as, by Theorem \ref{thm-Hochman1.23}, we have $\dim \pi_*\mu \geq E_P(\pi)$.
\end{proof}

The rest of the section is devoted to the proof of Proposition \ref{thm-self-affine-usc}. By \cite[\S 4.3]{Hochmanpreprint}, the first statement is known when the Lyapunov exponents of $\mu$ coincide and we may thus assume that $\mu$ has simple Lyapunov spectrum throughout the proof. Without loss of generality, we assume $y$-axis to be the major asymptotic contracting direction. It follows that for $\bar{\mu}$-almost every $\mathtt{i}\in\Lambda^\N$ and every large enough $n$ (depending on $\mathtt{i}$), we have $\lambda_{\mathtt{i}|_n} < \rho_{\mathtt{i}|_n}$. Recall that $\bar{\mu}$ is the Bernoulli measure whose canonical projection is $\mu$.

Let $R = [a,b] \times [c,d]$ be a rectangle. Recall that, if $\mu$ is a measure such that $0<\mu(R)<\infty$, then $\mu_R = \mu(R)^{-1}\mu|_R$. With a slight abuse of notation, let $\mu^R = H_*\mu_R$, where $H(x,y) = (\frac{2x-(b+a)}{b-a},\frac{2y-(c+d)}{d-c})$ is the linear map translating and rescaling $R$ onto $[-1,1]^2$. From now on, let $\pi \in \Pi_{2,1}$ be the orthogonal projection onto the $x$-axis. Let $\mu = \int \delta_x \times \mu_x\dd\pi_*\mu(x)$ be the disintegration of $\mu$ with respect to $\pi$, where $\mu_x$ is a measure on $\R$ for each $x$. We also write $\mu_\mathtt{i} = \mu_{\pi(\Pi(\mathtt{i}))}$ for all $\mathtt{i}\in\Lambda^\N$.

\begin{lemma}\label{lemma-productstructure}
    Let $\mu$ be a Borel probability measure on $\R^2$. Then, for $\pi_*\mu$-almost every $x$, we have
    $$
      d_{\rm LP}(\mu^{\pi^{-1}(B(x,2^{-r}))}, (\pi_*\mu)_{x,r}\times \mu_x) \to 0
    $$
    as $r\to \infty$. 
\end{lemma}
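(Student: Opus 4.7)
My plan is to proceed in three steps. First, I would make the rescaling precise: assuming without loss of generality that $\mu$ is compactly supported in $[-1,1]^2$, the rectangle underlying $\mu^{\pi^{-1}(B(x,2^{-r}))}$ is $R_r=B(x,2^{-r})\times[-1,1]$, and $H$ scales the $x$-coordinate by $2^r$ about $x$ while leaving the $y$-coordinate fixed. Using the disintegration $\mu=\int (\delta_{x'}\times\mu_{x'})\dd\pi_*\mu(x')$ and the substitution $t=2^r(x'-x)$, a direct computation yields the representations
\begin{equation*}
  \mu^{\pi^{-1}(B(x,2^{-r}))} = \int (\delta_t\times\mu_{x+2^{-r}t})\dd (\pi_*\mu)_{x,r}(t), \qquad (\pi_*\mu)_{x,r}\times\mu_x = \int(\delta_t\times\mu_x)\dd(\pi_*\mu)_{x,r}(t).
\end{equation*}

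Next, I would bound the L\'evy-Prokhorov distance between these two measures by an integrated L\'evy-Prokhorov distance between the fibers. Since for every $t$ the fibers $\delta_t\times\mu_{x+2^{-r}t}$ and $\delta_t\times\mu_x$ differ only in the second coordinate, their L\'evy-Prokhorov distance equals $d_{\rm LP}(\mu_{x+2^{-r}t},\mu_x)$. A standard Markov-type truncation (splitting the $t$-integration according to whether this distance exceeds $\sqrt{\bar\eps_r(x)}$ or not) then gives
\begin{equation*}
  d_{\rm LP}(\mu^{\pi^{-1}(B(x,2^{-r}))},(\pi_*\mu)_{x,r}\times\mu_x)\leq 2\sqrt{\bar\eps_r(x)},
\end{equation*}
where
\begin{equation*}
  \bar\eps_r(x) = \frac{1}{\pi_*\mu(B(x,2^{-r}))}\int_{B(x,2^{-r})} d_{\rm LP}(\mu_{x'},\mu_x)\dd \pi_*\mu(x').
\end{equation*}

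The third step is to prove that $\bar\eps_r(x)\to 0$ as $r\to\infty$ for $\pi_*\mu$-almost every $x$. The space $(\mathcal{P}([-1,1]),d_{\rm LP})$ is compact and Polish, and the disintegration map $x'\mapsto\mu_{x'}$ is $\pi_*\mu$-measurable into it. By Lusin's theorem, for each $\delta>0$ there exists a compact set $K_\delta\subseteq\R$ with $\pi_*\mu(K_\delta^c)<\delta$ on which $x'\mapsto\mu_{x'}$ is continuous. The Lebesgue-Besicovitch differentiation theorem applied to the Radon measure $\pi_*\mu$ on $\R$ ensures that $\pi_*\mu$-almost every $x\in K_\delta$ is a $\pi_*\mu$-density point of $K_\delta$. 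For such $x$, I would split $B(x,2^{-r})$ into its intersection with $K_\delta$, where $d_{\rm LP}(\mu_{x'},\mu_x)$ becomes uniformly small by continuity, and its complement, whose relative $\pi_*\mu$-mass tends to zero; this shows $\bar\eps_r(x)\to 0$. Taking a countable sequence $\delta_n\to 0$ and intersecting the resulting full-measure sets yields the claim for $\pi_*\mu$-almost every $x$.

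The main obstacle I anticipate is making the Lusin-and-differentiation argument of step three precise for the measure-valued (rather than scalar-valued) map $x'\mapsto\mu_{x'}$: the classical differentiation theorems must be combined with Lusin's theorem for Polish-valued maps, and one has to verify that density points of the Lusin sets (taken with respect to the possibly singular Radon measure $\pi_*\mu$ on $\R$) transfer continuity into convergence of the integral averages of $d_{\rm LP}(\mu_{x'},\mu_x)$. Each ingredient is classical, but some care is required to assemble them correctly.
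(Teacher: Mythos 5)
Your proposal is correct and follows essentially the same route as the paper: disintegrate $\mu$ over $\pi$, compare the two measures fiberwise, and combine Lusin's theorem for the measure-valued map $x\mapsto\mu_x$ with the Lebesgue--Besicovitch density theorem. The explicit Markov-type truncation you insert in step two, bounding the L\'evy--Prokhorov distance of the mixtures by $2\sqrt{\bar\varepsilon_r(x)}$, is a clean way to make precise the passage from fiberwise closeness on a large-density subset of $B(x,2^{-r})$ to closeness of the integrated measures, a step the paper's proof leaves implicit.
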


\begin{proof}
    By Lusin's theorem, for every $n\in\N$ there exists a set $E_n\subseteq \R$ with $\pi_*\mu(E_n)\geq 1-1/n$ such that $x\mapsto \mu_x$ is continuous on $E_n$. On the other hand, by the Lebesgue-Besicovitch density point theorem, there is a set $E_n'\subseteq E_n$ such that $\pi_*\mu(E_n') = \pi_*\mu(E_n)$ and
    $$
    \frac{\pi_*\mu(E_n' \cap B(x,2^{-r}))}{\pi_*\mu(B(x,2^{-r}))} \to 1
    $$
    as $r\to \infty$ for all $x\in E_n'$. Fix $n \in \N$ and $x\in E_n'$. If $H_r$ denotes the affine map which translates and rescales $\pi^{-1}(B(x,2^{-r})) \cap \R \times [-1,1]$ onto $[-1,1]^2$, we have
    \begin{align*}
    d_{\rm LP}(\mu^{\pi^{-1}(B(x,2^{-r}))}, (\pi_*\mu)_{x,r}\times \mu_x)
    &=d_{\rm LP}\biggl( \int_{B(x,2^{-r})} H_r(\delta_y\times\mu_y)\dd(\pi_*\mu)_{x,r}(y), \\
    &\qquad\qquad\int_{B(x,2^{-r})}H_r(\delta_y\times\mu_x) \dd(\pi_*\mu)_{x,r}(y)\biggr) \to 0 
\end{align*}
    as $r\to \infty$, since $d_{\rm LP}(\mu_y, \mu_x) = o(r)$ for $y\in E_n' \cap B(x,2^{-r})$. Since $\pi_*\mu(\bigcup_{n=1}^\infty E_n') = 1$, this concludes the proof.
\end{proof}

Combining Lemma \ref{lemma-productstructure} with the self-affinity of $\mu$, we deduce that outside a negligible set of scales, magnifications of $\mu$ also have a product-structure. 

\begin{lemma}\label{lemma-scenerymeasures}
  Let $\Phi$ be a diagonal iterated function system on $\R^2$ with attractor $K$ satisfying the rectangular strong separation condition and $\mu$ be a self-affine measure on $K$. Then there exists $c>0$ such that, for $\bar{\mu}$-almost every $\mathtt{i}\in\Lambda^\N$, we have
    $$
    \frac{1}{n}\sum_{k=0}^{n-1}d_{\rm LP}(\mu_{\Pi(\mathtt{i}), -\log\lambda_{\mathtt{i}|_k} + c}, S_c T_{\Pi(\sigma^k \mathtt{i})} ((\pi_*\mu)_{\pi(\Pi(\sigma^k \mathtt{i})), \log \rho_{\mathtt{i}|_k} - \log \lambda_{\mathtt{i}|_k}} \times \mu_{\sigma^k \mathtt{i}})) \to 0
    $$
    as $n\to\infty$.
\end{lemma}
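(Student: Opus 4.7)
The approach is to use self-affinity together with the rectangular strong separation condition to reduce the magnification $\mu_{\Pi(\mathtt{i}),\,-\log\lambda_{\mathtt{i}|_k}+c}$ at scale $\sim \lambda_{\mathtt{i}|_k}$ to a rescaled restriction of $\mu$ to a thin vertical rectangle in the ``inner'' symbolic coordinates, where a variant of Lemma \ref{lemma-productstructure} applies to reveal the desired product structure.

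First, I would choose $c>0$ large enough that, by the rectangular SSC, for $\bar\mu$-a.e.\ $\mathtt{i}$ and every sufficiently large $k$, the max-metric ball $B(\Pi(\mathtt{i}),\, e^{-c}\lambda_{\mathtt{i}|_k})$ lies inside the level-$k$ cylinder rectangle $f_{\mathtt{i}|_k}([-1,1]^2)$; this is possible because the SSC forces $\Pi(\mathtt{i})$ to lie at distance at least a uniform fraction of $\lambda_{\mathtt{i}|_k}$ from the top and bottom edges of this rectangle. Self-affinity and the SSC then give $\mu|_{f_{\mathtt{i}|_k}([-1,1]^2)} = p_{\mathtt{i}|_k}(f_{\mathtt{i}|_k})_*\mu$, so pulling back through $f_{\mathtt{i}|_k}^{-1}$ turns $B(\Pi(\mathtt{i}),\, e^{-c}\lambda_{\mathtt{i}|_k})$ into a rectangle $R_k$ centered at $\Pi(\sigma^k\mathtt{i}) = (x_0,y_0)$ with horizontal half-width $e^{-c}\lambda_{\mathtt{i}|_k}/\rho_{\mathtt{i}|_k}$ and vertical half-width $e^{-c}$. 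Composing with the rescaling $z\mapsto (e^c/\lambda_{\mathtt{i}|_k})(z-\Pi(\mathtt{i}))$ and using $\Pi(\mathtt{i}) = f_{\mathtt{i}|_k}(\Pi(\sigma^k\mathtt{i}))$ yields
\begin{equation*}
  \mu_{\Pi(\mathtt{i}),\,-\log\lambda_{\mathtt{i}|_k}+c} = (D_k)_*(T_{\Pi(\sigma^k\mathtt{i})})_*\tilde\mu_k,
\end{equation*}
where $D_k = \mathrm{diag}(e^c\rho_{\mathtt{i}|_k}/\lambda_{\mathtt{i}|_k},\, e^c)$ and $\tilde\mu_k$ is the normalized restriction of $\mu$ to $R_k$.

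Next, since $\mu$ has simple Lyapunov spectrum with the $y$-axis as the major contracting direction, $\lambda_{\mathtt{i}|_k}/\rho_{\mathtt{i}|_k}\to 0$ for $\bar\mu$-a.e.\ $\mathtt{i}$, so the rectangles $R_k$ collapse to thin vertical strips. Running the Lusin plus Lebesgue--Besicovitch argument of Lemma \ref{lemma-productstructure} \emph{mutatis mutandis}, but with a bounded vertical window $[y_0-e^{-c},\, y_0+e^{-c}]$ in place of the full $[-1,1]$ (handled by vertical Lebesgue density of the conditional $\mu_{x_0}$ at $y_0$, valid for $\mu$-a.e.\ point), one obtains that $\tilde\mu_k$ is close in L\'evy--Prokhorov distance to the product of the normalized restriction of $\pi_*\mu$ on the horizontal window with the normalized restriction of $\mu_{x_0}$ on the vertical window. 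Pushing this approximation through $(D_k)_*(T_{\Pi(\sigma^k\mathtt{i})})_*$ and identifying the resulting measure with $S_c(T_{\Pi(\sigma^k\mathtt{i})})_*\bigl((\pi_*\mu)_{\pi(\Pi(\sigma^k\mathtt{i})),\,\log\rho_{\mathtt{i}|_k}-\log\lambda_{\mathtt{i}|_k}}\times\mu_{\sigma^k\mathtt{i}}\bigr)$ by direct computation — the horizontal rescaling in $D_k$ absorbs the factor $\rho_{\mathtt{i}|_k}/\lambda_{\mathtt{i}|_k}$ of the zoom of $\pi_*\mu$, the remaining factor $e^c$ being accounted for by $S_c$, while the vertical translation by $-y_0$ matches the vertical component of $T_{\Pi(\sigma^k\mathtt{i})}$ — shows that for $\bar\mu$-a.e.\ $\mathtt{i}$ the LP-distance appearing inside the Ces\`aro sum tends to $0$ pointwise as $k\to\infty$. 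Since these distances are uniformly bounded by $1$, bounded convergence concludes the proof.

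The main obstacle is verifying the product approximation for $\tilde\mu_k$ on the \emph{bounded} rectangle $R_k$, rather than on the infinite vertical strip used in Lemma \ref{lemma-productstructure}: this demands adapting the Lusin/density argument to handle simultaneously a horizontal window of shrinking width and a bounded vertical window of fixed width, and then a careful bookkeeping of translations and rescalings to match the resulting expression with the compact form appearing on the right-hand side of the statement.
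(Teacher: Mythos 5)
Your reduction is the same as the paper's: choose $c$ via the rectangular SSC so that the max-metric ball $B(\Pi(\mathtt{i}),e^{-c}\lambda_{\mathtt{i}|_k})$ meets only the level-$k$ cylinder $f_{\mathtt{i}|_k}([-1,1]^2)$, pull back by $f_{\mathtt{i}|_k}^{-1}$ to a thin rectangle $R_k$ centered at $\Pi(\sigma^k\mathtt{i})$, and identify $\mu_{\Pi(\mathtt{i}),-\log\lambda_{\mathtt{i}|_k}+c}$ with $\mu^{R_k}$; then invoke Lemma~\ref{lemma-productstructure} to exhibit the product structure. Your computation of $D_k$ and the matching of the rescalings with the $S_c\,T_{\Pi(\sigma^k\mathtt{i})}$ on the right-hand side is also essentially the intended bookkeeping, and your remark that the bounded vertical window must be handled (via positive mass of $\mu_{x_0}$ on $[y_0-e^{-c},y_0+e^{-c}]$, which holds since $y_0\in\operatorname{spt}\mu_{x_0}$) is a fair observation that the paper's ``Thus'' step absorbs.

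However, the final step of your argument has a genuine gap. You claim that ``for $\bar\mu$-a.e.\ $\mathtt{i}$ the LP-distance appearing inside the Ces\`aro sum tends to $0$ pointwise as $k\to\infty$'' and then invoke bounded/Ces\`aro convergence. This pointwise statement does not follow from Lemma~\ref{lemma-productstructure}. That lemma gives, for $\pi_*\mu$-a.e.\ \emph{fixed} point $x$, convergence as the horizontal scale tends to infinity, at a rate $N(x)$ depending on $x$. In the Ces\`aro sum both the base point $x_k=\pi(\Pi(\sigma^k\mathtt{i}))$ \emph{and} the scale $r_k=\log\rho_{\mathtt{i}|_k}-\log\lambda_{\mathtt{i}|_k}+c$ move with $k$; there is no reason why $r_k\ge N(x_k)$ for all large $k$, since the shift orbit $(\sigma^k\mathtt{i})_k$ of a generic $\mathtt{i}$ cannot avoid a set of full measure beyond finitely many exceptions, and $N(\cdot)$ can be unbounded. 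So the pointwise limit may well fail along a positive-density (or at least infinite) set of indices $k$. The correct way to close the argument — and what the paper does — is to first upgrade Lemma~\ref{lemma-productstructure} via Egorov's theorem to a \emph{uniform} bound: for each $\eps>0$ there are $A\subseteq\Lambda^\N$ with $\bar\mu(A)>1-\eps$ and $N\ge 1$ such that the LP-distance is $<\eps$ whenever $\sigma^k\mathtt{i}\in A$ and $r_k\ge N$; and then apply the Birkhoff ergodic theorem to $\mathds{1}_A$ so that the fraction of $k\le n$ with $\sigma^k\mathtt{i}\in A$ converges to $\bar\mu(A)\ge 1-\eps$. This yields $\limsup_n \frac{1}{n}\sum_{k<n} d_{\rm LP}(\cdots)\le 2\eps$ directly, without any pointwise convergence. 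You need to replace your last paragraph with this Egorov-plus-ergodic-theorem argument; as written the proof does not go through.
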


\begin{proof}
    Let $c = \max \lbrace -\log \dist(\varphi_i([-1,1]^2), \varphi_j([-1,1]^2):\ i,j \in \Lambda\text{ such that }i\neq j\rbrace$ and recall that, for $\bar{\mu}$-almost every $\mathtt{i}$ and every large enough $n$, we have $\lambda_{\mathtt{i}|_n} < \rho_{\mathtt{i}|_n}$. By the choice of $c$, the ball $B(\Pi(\mathtt{i}), 2^{\log \lambda_{\mathtt{i}|_n} - c})$ only intersects $\varphi_{\mathtt{i}|_n}([-1,1]^2)$. Writing
    $$
      R_{\mathtt{i}, n} = \varphi_{\mathtt{i}|_n}^{-1} B(\Pi(\mathtt{i}), 2^{\log \lambda_{\mathtt{i}|_n} - c}),
    $$
    we see that $R_{\mathtt{i},n}$ is the rectangle $[-2^{\log \lambda_{\mathtt{i}|_n} - \log \rho_{\mathtt{i}|_n}-c}, 2^{\log \lambda_{\mathtt{i}|_n} - \log \rho_{\mathtt{i}|_n}-c}] \times [-2^{-c},2^{-c}]$ translated so that its centre is at $\Pi(\sigma^n \mathtt{i})$. Hence, by the self-affinity,
    $$
	\mu_{\Pi(\mathtt{i}), -\log\lambda_{\mathtt{i}|_n} + c} = \mu^{R_{\mathtt{i}, n}}.
    $$
    Let $\varepsilon>0$ and notice that, by Egorov's theorem and Lemma \ref{lemma-productstructure}, there exists a set $A\subset\Lambda^\N$ and $N\geq1$ such that $\bar{\mu}(A)>1-\varepsilon$ and
    $$
    d_{\rm LP}(\mu^{\pi^{-1}(B(\Pi(\iii),2^{-r}))},\ (\pi_*\mu)_{\Pi(\iii),r}\times \mu_\iii)<\varepsilon
    $$
    for all $\iii\in A$ and $r\geq N$. Thus, 
    $$
    d_{\rm LP}(\mu^{R_{\mathtt{i}, k}}, S_c (T_{\Pi(\sigma^k \mathtt{i})})_* ((\pi_*\mu)_{\pi(\Pi(\sigma^k \mathtt{i})), \log \rho_{\mathtt{i}|_k} - \log \lambda_{\mathtt{i}|_k}} \times \mu_{\sigma^k \mathtt{i}}))<\varepsilon
    $$
    for every $k$ sufficiently large such that $\sigma^k\iii\in A$. Therefore,
    \begin{align*}
      \frac{1}{n}\sum_{k=0}^{n-1}d_{\rm LP}&(\mu^{R_{\mathtt{i}, k}}, S_c (T_{\Pi(\sigma^k \mathtt{i})})_* ((\pi_*\mu)_{\pi(\Pi(\sigma^k \mathtt{i})), \log \rho_{\mathtt{i}|_k} - \log \lambda_{\mathtt{i}|_k}} \times \mu_{\sigma^k \mathtt{i}}))\\
      &\leq\frac{1}{n}\sum_{k=0}^{n-1}(\mathds{1}\{\sigma^k\iii\in A\}\varepsilon+\mathds{1}\{\sigma^k\iii\notin A\})+\frac{2N}{n}\\
      &\leq\varepsilon+\frac{1}{n}\sum_{k=0}^{n-1}\mathds{1}\{\sigma^k\iii\notin A\}+\frac{2N}{n} \to \eps+\bar{\mu}(\Lambda^\N\setminus A) \le 2\varepsilon
    \end{align*}
  as $n \to \infty$. Since $\varepsilon>0$ was arbitrary, the claim follows.
\end{proof}

We prove Proposition \ref{thm-self-affine-usc} by first showing that the product measures $\pi_*\mu \times\mu_\mathtt{i}$ are uniformly scaling along a subsequence for $\bar{\mu}$-almost every $\mathtt{i}\in\Lambda^\N$, and then apply Lemma \ref{lemma-scenerymeasures} to transfer the property to the measure $\mu$.

\begin{lemma}\label{lemma-productusc}
    Let $\Phi$ be a diagonal iterated function system on $\R^2$ with attractor $K$ satisfying the rectangular strong separation condition and $\mu$ be a self-affine measure on $K$. Then there exists an ergodic fractal distribution $P$ with $\dim P \geq \dim \mu$ and a sequence $(n_k)_{k\in\N}$ such that
    \begin{equation}\label{eq-lemma-productusc}
        \lim_{k\to\infty} \frac{1}{n_k}\int_0^{n_k}\delta_{(\pi_*\mu\times\mu_\iii)_{(x,y),t}}\dd t = P
    \end{equation}
    for $\bar{\mu}$-almost all $\mathtt{i}\in\Lambda^\N$ and $\pi_*\mu\times\mu_\iii$-almost all $(x,y)$. Furthermore,
    \begin{equation*}
      P = (S_{-\log \rho_\iii}, S_{-\log \lambda_{\mathtt{j}}})_*P
    \end{equation*}
    for all $\iii,\mathtt{j}\in\Lambda^*$.
\end{lemma}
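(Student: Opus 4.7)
The plan is to identify $P$ as the product fractal distribution $P_1 \boxtimes P_2$, where $P_1$ is generated by the self-similar projection $\pi_*\mu$ and $P_2$ arises from the fibre measures $\mu_\iii$; with such a product representation in hand, the anisotropic scaling invariance and the dimension lower bound drop out of the $(S_t)_{t \geq 0}$-invariance of one-dimensional fractal distributions. First, observe that $\pi_*\mu$ is the self-similar measure on $\R$ associated to the (possibly overlapping) one-dimensional iterated function system $\Phi^x = (x \mapsto \rho_i x + \pi(a_i))_{i \in \Lambda}$ with weights inherited from $\bar\mu$. Hence Theorem \ref{thm-unif-scaling-1} applied in dimension $d=1$, which requires no separation, implies that $\pi_*\mu$ is uniformly scaling and generates an ergodic fractal distribution $P_1$ with $\dim P_1 = \dim \pi_*\mu$.

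Next I address the fibre measures. Theorem \ref{thm-Hochman1.7} guarantees that for $\bar\mu$-a.e. $\iii$ and $\mu_\iii$-a.e. $y$ every tangent distribution of $\mu_\iii$ at $y$ is a fractal distribution. To extract a common subsequence $(n_k)$ and a single FD $P_2$ working for $\bar\mu$-a.e. $\iii$, I would invoke the self-affine identity
\begin{equation*}
  \mu|_{f_i(\R^2)} = p_i(f_i)_*\mu,
\end{equation*}
valid under the rectangular strong separation condition, disintegrate it along $\pi$, and deduce that $\mu_\iii$ and $\mu_{T\iii}$ are related by an affine map scaling the $y$-axis by $\lambda_{i_1}$, where $T$ denotes the shift on $\Lambda^\N$. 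Since fractal distributions are $(S_t)$-invariant, the subsequential tangent distribution $Q_\iii$ of $\mu_\iii$ at a typical $y$ is then a $T$-invariant measurable function of $\iii$, and by ergodicity of the Bernoulli shift $(T,\bar\mu)$ it equals $\bar\mu$-a.s. a fixed distribution $P_2$; a standard diagonal/compactness argument over a countable dense family then yields a single sequence $(n_k)$ valid for $\bar\mu$-a.e. $\iii$. Using the identity $(\pi_*\mu \times \mu_\iii)_{(x,y),t} = (\pi_*\mu)_{x,t} \times (\mu_\iii)_{y,t}$, the Cesàro averages along $(n_k)$ then converge at $(\pi_*\mu \times \mu_\iii)$-a.e. $(x,y)$ to the product distribution $P := P_1 \boxtimes P_2$ on $\mathcal{P}([-1,1]^2)$. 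The invariance $(S_{-\log \rho_\iii}, S_{-\log \lambda_\jjj})_* P = P$ is immediate from the product structure and the $(S_t)_{t \geq 0}$-invariance of each one-dimensional factor, while $\dim P = \dim P_1 + \dim P_2 \geq \dim \mu$ follows from exact-dimensionality and the Ledrappier-Young-type decomposition $\dim \mu = \dim \pi_*\mu + \dim \mu_\iii$ holding $\bar\mu$-a.e.

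The main obstacle is ensuring that $P_1 \boxtimes P_2$ is genuinely \emph{ergodic} under the diagonal flow $(S_t, S_t)_{t \geq 0}$: products of ergodic measure-preserving systems need not be ergodic, and the pure-point spectra of the two one-dimensional scenery flows can in principle resonate. If $P_1 \boxtimes P_2$ fails to be ergodic, my plan is to replace it by the ergodic component of $P_1 \boxtimes P_2$ that contains $\pi_*\mu \times \mu_\iii$ for $\bar\mu$-a.e. $\iii$ (using Theorem \ref{thm-Hochman1.6}, which states that typical measures for a fractal distribution generate the ergodic component in which they lie), and to verify that this component still inherits the required dimension bound and the anisotropic invariances from the two factors.
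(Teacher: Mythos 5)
Your construction of $P$ as the product distribution $P_1 \boxtimes P_2$ has a genuine gap that your fallback does not repair. Even granting that $\pi_*\mu$ generates $P_1$ and $\mu_\iii$ generates $P_2$, the joint scenery $t \mapsto \bigl((\pi_*\mu)_{x,t},(\mu_\iii)_{y,t}\bigr)$ only equidistributes, along a subsequence, for some \emph{joining} of $P_1$ and $P_2$; there is no reason for that joining to be the independent product. Your fallback -- pass to ``the ergodic component of $P_1\boxtimes P_2$ that contains $\pi_*\mu\times\mu_\iii$'' -- is ill-posed: $\pi_*\mu\times\mu_\iii$ is the measure you are magnifying, not a measure drawn from $P_1\boxtimes P_2$, so it does not select an ergodic component of that distribution. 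Moreover, once you abandon the literal product structure, the two things you said ``drop out'' no longer drop out: the anisotropic invariance $(S_s,{\rm Id})_*Q = Q$ is false for general joinings $Q$ of $(S_t)$-invariant systems, so it requires a separate proof, and the identity \eqref{eq-lemma-productusc} requires a tracking argument back from a tangent measure of the product to the product itself.

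The paper takes a different and more robust route: rather than forming a distribution from the factors and hoping it is the right one, it takes a tangent distribution of the product measure $\pi_*\mu\times\mu_\iii$ directly. By Theorem \ref{thm-Hochman1.7} this is a fractal distribution; by Theorem \ref{thm-Hochman1.6} a $P$-typical measure generates the ergodic component it belongs to; by the structure lemma (the analogue of Lemma \ref{lemma-structureoftangents}, applied coordinatewise using $(\pi_*\mu\times\mu_\iii)_{(x,y),t}=(\pi_*\mu)_{x,t}\times(\mu_\iii)_{y,t}$) that typical measure is of the form $(\zeta_1\cdot\pi_*\mu)_{B(0,1)}\times(\zeta_2\cdot\mu_\iii)_{B(0,1)}$; Proposition \ref{prop-Hochman1.19} and exact-dimensionality pin down $\dim P\geq\dim\mu$; and Proposition \ref{prop-main-1} together with Proposition \ref{prop-Hochman1.9} transfers the uniform scaling of the tangent measure back to $\pi_*\mu\times\mu_\iii$ along a subsequence, after conjugating away $(S_{\log h'(x)}, S_{\log g'(y)})$. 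Ergodicity of $P$ is then automatic because $P$ arises as an ergodic component, and the anisotropic invariance is proved separately using the fibre relation \eqref{eq-almostselfsimilar}, self-similarity of $\pi_*\mu$, and the Lebesgue--Besicovitch theorem, not read off from a product decomposition. Your treatment of the fibres $\mu_\iii$ via shift ergodicity and the relation to $\mu_{T\iii}$ is in the right spirit (it is the suspension-flow argument the paper attributes to ``standard arguments'' and to Kempton), but it needs the above machinery around it, not the product-distribution ansatz.
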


\begin{proof}
    Let $\pi' \in \Pi_{2,1}$ be the orthogonal projection onto the $y$-axis. Standard arguments (see also \cite[Theorem 4.2]{Kempton2015} for a more general result) using the relation 
    \begin{equation}\label{eq-almostselfsimilar}
    (\mu_\iii)_{\pi'(\Pi(\iii)), t} =(\mu_{\sigma(\iii)})_{\pi'(\Pi(\sigma(\iii))), t+\log \lambda_{\iii|_1}}
    \end{equation}
    for all large enough $t$ show that, for $\bar{\mu}$-almost every $\iii$, the measure $\mu_\iii$ is uniformly scaling and generates the ergodic fractal distribution
    \begin{equation*}
        \int_{\Lambda^\N} \int_0^{-\log \lambda_{\iii|_1}} S_t (T_{\pi'(\Pi(\iii))})_*\mu_\iii \dd t\dd \bar{\mu}(\iii).
    \end{equation*}
    Since $\pi_*\mu$ is a self-similar measure on $\R$ and $\dim\pi_*\mu\times \mu_\iii = \dim \mu$ for $\bar{\mu}$-almost every $\iii$ by \cite[Theorem 2.7]{Barany2015} (see also \cite{BaranyKaenmaki2017}), the rest of the proof of \eqref{eq-lemma-productusc} is analogous to the proof of Proposition \ref{lemma-productuscselfconformal}. The final claim is easy to see by using \eqref{eq-almostselfsimilar}, self-similarity of $\pi_*\mu$, and the Lebesgue-Besicovitch density point theorem. 
\end{proof}

We are now ready to prove Proposition \ref{thm-self-affine-usc}. Since the ideas in the proof have already appeared in the proof of Theorem \ref{thm-unif-scaling-2}, we will only provide a sketch.

\begin{proof}[Proof of Proposition \ref{thm-self-affine-usc}]
    Let $Q$ be an ergodic component of a tangent distribution of $\mu$. Combining Lemma \ref{lemma-productstructure} with a minor modification of Lemma \ref{lemma-structureoftangents}, we see that outside a set of zero $\mu$-measure, $Q$-almost every measure is uniformly scaling, generates $Q$ and is of the form
    \begin{equation*}
        ((\nu\cdot\pi_*\mu) \times \mu_\iii)_{(x,y),t}
    \end{equation*}
    for some $\iii\in\Lambda^\N$, $(x,y)\in\R^2$, $t\geq 0$, and $\nu$ supported on the space of linear maps $h\colon\R\to\R$, $h(y) = r_h y + a_h$. It follows from Lemma \ref{lemma-productstructure} that $r_h$ is in the closure of $\lbrace \rho_\iii \lambda_\mathtt{j}^{-1} : \iii, \mathtt{j}\in\Lambda^*\rbrace$. Similarly as in the proof of Theorem \ref{thm-unif-scaling-2}, Propositions \ref{prop-Hochman1.9} and \ref{prop-main-1} together assert that
    \begin{equation*} 
        \lim_{n\to\infty} \iint d_{\rm LP}\biggl(Q, \frac{1}{n}\int_0^n  (S_{-\log r_h}, {\rm Id})_* \delta_{(\pi_*\mu\times\mu_\iii)_{(x,y),t}}\dd t\biggr)\dd \pi_*\mu\times\mu_\iii(x,y)\dd\nu(h) = 0  
    \end{equation*}
    On the other hand, if $P$ is the fractal distribution of Lemma \ref{lemma-productusc}, combining the above with the dominated convergence theorem and applying the triangle inequality, we see that 
    \begin{equation*}
        Q = (S_{-\log r_h},{\rm Id})_* P = P,
    \end{equation*}
    where the last equality follows since $r_h \in \overline{\lbrace \rho_\iii \lambda_\mathtt{j}^{-1}: \iii, \mathtt{j}\in\Lambda^*\rbrace}$.
\end{proof}

\begin{ack}
  We thank Amir Algom for useful comments.
\end{ack}


\end{document}